\documentclass[10pt]{article}
\usepackage{amsmath}
\usepackage{amsfonts}
\usepackage{amssymb}
\usepackage[english]{babel}
\usepackage{graphicx}
\usepackage{amsthm}
\usepackage{hyperref}
\usepackage{accents}
\usepackage{empheq}
\usepackage [autostyle, english = american]{csquotes}
\MakeOuterQuote{"}
\numberwithin{equation}{section}
\theoremstyle{plain}
\newtheorem*{theorem*}{Theorem}
\newtheorem*{lemma*}{Lemma}
\newtheorem{theorem}{Theorem}
\newtheorem{lemma}{Lemma}[section]
\newtheorem{corollary}[lemma]{Corollary}

\newtheorem{proposition}[lemma]{Proposition}

\newenvironment{customthm}[1]
{\innercustomthm}
{\endinnercustomthm}

\theoremstyle{definition}
\newtheorem{definition}[lemma]{Definition}
\newtheorem{remark}[lemma]{Remark}


\def\pr{\partial}
  
\def\V{\Vert}

\usepackage{geometry}
\geometry{
total={170mm,257mm},left=28mm,right=28mm, top=30mm, bottom = 35mm
}
\begin{document}
\title{Finite-time Singularity Formation for Strong Solutions to the Boussinesq System}
\author{Tarek M. Elgindi\textsuperscript{1} and In-Jee Jeong\textsuperscript{2}}
\footnotetext[1]{Department of Mathematics, UC San Diego. E-mail: telgindi@ucsd.edu.}
\footnotetext[2]{Department of Mathematics, Princeton University and UC San Diego. E-mail:
ijeong@princeton.edu}
\date{\today}
\maketitle

\begin{abstract}
As a follow up to our work \cite{EJSI}, we give examples of finite-energy and Lipschitz continuous velocity field and density $(u_0,\rho_0)$ which are $C^\infty$-smooth away from the origin and belong to a natural local well-posedness class for the Boussinesq equation whose corresponding local solution becomes singular in finite time. That is, while the sup norm of the gradient of the velocity field and the density remain finite on the time interval $t\in [0,1)$, both quantities become infinite as $t\rightarrow 1$. The key is to use scale-invariant solutions similar to those introduced in \cite{EJSI}. The proof consists of three parts: local well-posedness for the Boussinesq equation in critical spaces, the analysis of certain special infinite-energy solutions belonging to those critical spaces, and finally a cut-off argument to ensure finiteness of energy. All of this is done on spatial domains $\{(x_1,x_2):  x_1 \ge \gamma|x_2|\}$ for any $\gamma > 0$ so that we can get arbitrarily close to the half-space case. We show that the $2D$ Euler equation is globally well-posed in all of the situations we look at, so that the singularity is not coming from the domain or the lack of smoothness on the data but from the vorticity amplification due to the presence of a density gradient. It is conceivable that our methods can be adapted to produce finite-energy $C^\infty$ solutions on $\mathbb{R}^2_+$ which become singular in finite time. 
\end{abstract}
\tableofcontents

\section{Introduction}

The global regularity problem for the $3D$ Euler system is a well known open problem in mathematical fluid dynamics. While the Euler system was first derived over 250 years ago, still much is unknown about it and many other incompressible fluid models. Aside from the non-linearity of the models, they are also highly non-local: any disturbance in one portion of the fluid immediately affects the whole of the fluid. $3D$ Euler flows also enjoy very few known conserved quantities. The non-linearity, non-locality, and lack of conserved quantities, could lead one to believe that "anything" could happen to solutions to the $3D$ Euler system and similar models. Indeed, it is not even known if smooth solutions always remain smooth for all time though there is strong numerical evidence that smooth solutions may develop singularities in finite time, at least in some scenarios. Proving that solutions to the $3D$ Euler equation actually exhibit a given conjectured behavior is a formidable challenge with a lack of control on the dynamics of solutions due to non-linearity, non-locality, and the absence of conserved quantities. For this reason, if one wishes to prove something non-trivial about the dynamics of solutions to the $3D$ Euler system or a similar system, it is important to look at very specific classes of solutions where one may have better control on the solutions. We adopt this philosophy here to analyze the dynamics of a class of strong solutions to the $2D$ Boussinesq system and in a follow-up work we consider the $3D$ Euler system. 

An important piece in the global regularity puzzle was placed by Luo and Hou \cite{HouLuo} who have recently produced very strong numerical evidence that smooth solutions to the $3D$ axi-symmetric Euler system develop singularities in finite time when the fluid domain has a smooth solid boundary.  
Essentially, they showed that the presence of a solid boundary in the fluid domain stabilizes the solution enough to make the solution become singular in finite time. Thereafter, there were a number theoretical works which confirmed that singularity formation in the presence of a solid boundary holds on some $1D$ and $2D$ \emph{models} of the real $3D$ and $2D$ fluid systems. Here, we will work with the actual $2D$ Boussinesq system and we will show finite-time singularity formation for strong solutions when the fluid domain is a sector with angle less than $\pi$. 

Our main point of departure is that most fluid systems (such as the Boussinesq and $3D$ Euler systems) satisfy certain rotation, reflection, and/or scaling symmetries. This is to say that if the initial velocity field of the fluid satisfies these symmetries, the (unique) solution will continue to satisfy these symmetries. It is then conceivable that if one imposes enough symmetries on the initial data, one will be able to have good control on the respective solutions. Of course, these ideas are classical and are used throughout the analysis of PDE; however, the scaling symmetry in particular seems to never been effectively used in incompressible problems. This is mainly due to non-locality.  Here, we will use the scaling symmetry to build finite-energy strong solutions to the Boussinesq system which become singular in finite time. Roughly one can phrase the main results of this paper in the following terms. 
\begin{center} \emph{Let $\Omega_\gamma:=\{(x_1,x_2)\in\mathbb{R}^2: \gamma|x_2|\leq x_1\}.$ We give a space $X\subset W^{1,\infty}$ in which the Boussinesq system on $\Omega_\gamma$ can be solved uniquely for a length of time depending only on the size of the initial velocity field in $X$ for each $\gamma>0$. Moreover, we show that there are finite-energy solutions belonging to $X$ which leave $W^{1,\infty}$ after some finite amount of time. Finally, we show that all solutions in $X$ with zero density remain in $X$ for all time.}  \end{center}

This is to say that there are local strong solutions on the domains $\Omega_\gamma$ which become singular in finite time even though the same cannot happen for the $2D$ Euler equation. The presence of the boundary is important in our construction but we believe that it might be possible to smooth out the corner at $(0,0)$ and this will be taken up in future work. Independent of being able to smooth out the boundary, however, this result clearly sets apart the $2D$ Euler equation from the Boussinesq system and the $3D$ Euler equation since there is global regularity in the former and finite time blow-up in the latter cases.

\subsection{The $2D$ Boussinesq system}
Recall the $2D$ Boussinesq system which models the dynamics of an inviscid buoyant fluid\footnote{We are considering a slightly unconventional scenario where gravity is pushing horizontally and to the left for some notational convenience.}:
\begin{empheq}[left = \empheqlbrace]{align}
\label{B1} &\partial_t u + u\cdot\nabla u+\nabla p=  \left( \begin{array}{cc}
-\rho \\
0 \\
\end{array} \right),
\\
\label{B2} &\partial_t \rho + u\cdot\nabla \rho=0.
\end{empheq}
Here, $u$ is the velocity field of a two dimensional fluid $u:\Omega\times \mathbb{R}\rightarrow \mathbb{R}^2$ and $\rho:{\Omega}\times \mathbb{R}\rightarrow \mathbb{R}$ is the density of the fluid. As is usual, one takes $u$ to satisfy the no-penetration boundary condition $u\cdot n=0$ on $\partial\Omega$.
Just like the case of the $3D$ Euler equations, it was unknown whether strong solutions can become singular in finite time. This is due to a gap between known conserved quantities and what is needed to propagate smoothness. To our knowledge, the only known coercive global-in-time a-priori estimates for the Boussinesq system are the following:
$$\V\rho(t)\V_{L^p}=\V\rho_0\V_{L^p}$$ and $$\V u(t)\V_{L^2} \leq \V u_0\V_{L^2} + t \V\rho_0\V_{L^2}$$for all $t>0$ and all $1\leq p\leq \infty$. 
However, like the $3D$ Euler equations, an a-priori bound on $\V \nabla u\V_{L^\infty}$ is what is needed to ensure global regularity.  
In fact, the global regularity problem for the Boussinesq system is even discussed in Yudovich's "eleven great problems of mathematical hydrodynamics" \cite{Y3}. Here, we will show the existence of finite-energy strong solutions which become singular in finite time. 
\subsection{Analogy with the $3D$ axi-symmetric Euler equations}
Upon passing to the vorticity formulation for this system, we see clearly the relation between the Boussinesq system and the axi-symmetric Euler equations:
\begin{align*}
& \frac{D\omega}{Dt}=\partial_{x_2} \rho &   &\frac{\tilde{D}}{Dt}\Big(\frac{\omega_\theta}{r}\Big)=-\frac{1}{r^4} \partial_{x_3}[(ru^\theta)^2]   \\
&\frac{D\rho}{Dt}=0      &     &\frac{\tilde D}{Dt}\Big(r u^\theta\Big)=0 \\
&\frac{D}{Dt} = \partial_t +u_1\partial_{x_1} +u_2\partial_{x_2}            &     &\frac{\tilde D}{Dt}=\partial_t + u^r\partial_r+ u^3 \partial_{x_3} \\
&u_1 = \partial_{x_2}\psi, \quad u_2 = -\partial_{x_1} \psi                         &     &u^r=\frac{\partial_{x_3}\tilde\psi}{r}, \quad u^3=-\frac{\partial_r\tilde\psi}{r} \\
& L\psi = \omega, \quad  L=\partial_{x_1}^2+\partial_{x_2}^2         &   & \tilde{L}\tilde\psi=\frac{\omega^\theta}{r}, \quad \tilde{L}=\frac{1}{r}\partial_r (\frac{1}{r} \partial_r)+\frac{1}{r^2}\partial_{x_3}^2
\end{align*}
with the Boussinesq system (in vorticity form) being the system on the left and the axi-symmetric $3D$ Euler system on the right. 
According to some authors, the behavior of solutions to the Boussinesq system and the axi-symmetric $3D$ Euler equations away from the symmetry axis $r=0$ should be "identical" (\cite{MB}, \cite{EShu94}). In both models, there is a vorticity ($\omega$ or $\omega^\theta$) which produces a velocity field ($u$ or $(u^r,u_3)$) which advects a scalar quantity ($\rho$ or $r u^\theta$). Then a derivative of the advected quantity forces the vorticity. It is conceivable that in both of these situations, strong advection of the scalar quantity causes vorticity growth, and this vorticity growth causes stronger advection, and that uncontrollable non-linear growth occurs until singularity in finite time. Getting a hold of this mechanism requires strong geometric intuition and, seemingly, much more information than what is now known about the system. 
\subsection{Main result}
In \cite{EJSI}, we proved that a sufficient condition for blow-up of finite-energy strong solutions to the surface quasi-geoestrophic (SQG) equation is blow-up for scale-invariant (radially homogeneous) solutions to the SQG equation. These scale-invariant solutions satisfy a $1D$ equation, and blow-up for that $1D$ equation is still open though some progress has been made in our work \cite{EJDG}. Here, we extend the results of \cite{EJSI} to the Boussinesq system and also prove singularity formation for the associated $1D$ equation. In particular, the program introduced in \cite{EJSI} as applied to the Boussinesq system is completed here.  
We now state the main theorems. To do so, we must give a few definitions. First, let $\Omega$ be the spatial domain (whose boundary is depicted in Figure \ref{fig:setup} as thickened lines) $$\Omega:=\{(x_1,x_2)\in \mathbb{R}^2: |x_2| < x_1 \}.$$ Second, we define the scale of spaces $\mathring{C}^{0,\alpha}$ introduced in \cite{EJSI} and \cite{E1} using the following norm: 
$$\V f\V_{\mathring{C}^{0,\alpha}(\overline{\Omega})}:= \V f\V_{L^\infty(\overline{\Omega})} + \V |\cdot|^{\alpha} f\V_{{C}_*^\alpha(\overline{\Omega})}.$$ We recall some of the properties of this space in Section \ref{sec:prelim}.  This scale of spaces can be used to propagate boundedness of the vorticity, the full gradient of the velocity field, $\nabla u,$ as well as angular derivatives thereof. 
Next we will state the main theorems, which are local well-posedness for the Boussinesq system in $\mathring{C}^{0,\alpha}(\overline{\Omega})$ and finite-time singularity formation in the same space. 
\begin{figure}
\includegraphics[scale=0.8]{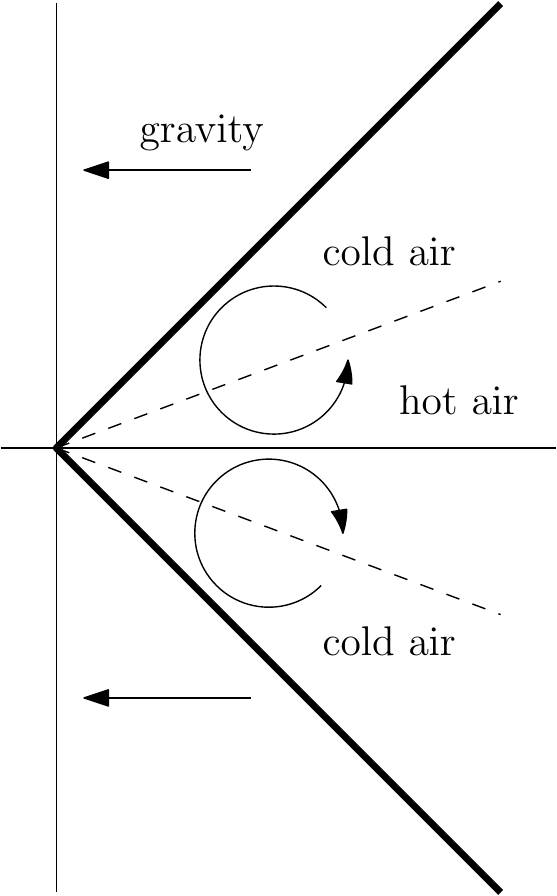} 
\centering
\caption{Setup for the domain and the data}
\label{fig:setup}
\end{figure}
\begin{customthm}{A}[Local well-posedness]\label{MainThm1}
Let $0<\alpha<1.$ Suppose that $\nabla\rho_0, \omega_0\in \mathring{C}^{0,\alpha}(\overline{\Omega})$ are given, where $\omega_0$ and $\partial_{x_2}\rho_0$ are odd and $\partial_{x_1}\rho_0$ is even with respect to the $x_1$-axis. Then, there exists a time $T=T(\V\nabla\rho_0\V_{\mathring{C}^{0,\alpha}},\V\omega_0\V_{\mathring{C}^{0,\alpha}})>0$ and a unique solution $(\omega,\rho)$ to the $2D$ Boussinesq system with $$\omega,\nabla\rho\in  C([0,T); \mathring{C}^{0,\alpha}(\overline{\Omega}))$$ and $(\omega,\rho)|_{t=0}=(\omega_0,\rho_0),$ satisfying the same set of symmetries. Moreover, the local solution $(\omega,\nabla\rho)$ cannot be continued past some $T^{*}<\infty$ if and only if $$\limsup_{t\rightarrow T^{*}} \int_0^t \V\nabla u(s)\V_{L^\infty}ds =+\infty\quad or\quad  \limsup_{t\rightarrow T^{*}}  \int_0^t \V\nabla \rho(s)\V_{L^\infty}ds=+\infty.$$
\end{customthm}
\begin{customthm}{B}[Global well-posedness when $\rho\equiv 0$]\label{MainThm2}
When $\rho_0\equiv 0$, the local solution of Theorem \ref{MainThm1} is global and $\V\omega\V_{\mathring{C}^{0,\alpha}}$ satisfies a double-exponential upper bound: \begin{equation}\label{eq:expexp}
\begin{split}
\V \omega(t)\V_{\mathring{C}^{0,\alpha}(\overline{\Omega})} \le C\exp(C\exp(Ct)),
\end{split}
\end{equation} for some $C > 0$ depending only on $\V \omega_0\V_{\mathring{C}^{0,\alpha}}$.
\end{customthm}

\begin{customthm}{C}[Singularity formation for compactly supported data]\label{MainThm3}
There exists a pair $\omega_0,\nabla\rho_0\in \mathring{C}^{0,\alpha}(\overline{\Omega})$ which are compactly supported in $\bar\Omega$ with the corresponding $u_0$ of finite energy such that the local solution of Theorem \ref{MainThm1} satisfies $$\limsup_{t\rightarrow T^*} \int_0^t \V\nabla u(s)\V_{L^\infty}ds=+\infty$$ for some $T^*<\infty$. 
\end{customthm}
\begin{customthm}{D}[Singularity formation for smooth data]\label{MainThm4}
There exists a pair $\omega_0,\nabla\rho_0\in L^\infty\cap {C}^{\infty}(\overline{\Omega})$ such that the local unique solution of Theorem \ref{MainThm1} satisfies $$\limsup_{t\rightarrow T^*} \int_0^t \V\nabla u(s)\V_{L^\infty}ds=+\infty$$ for some $T^*<\infty$. 
\end{customthm}
From Theorem \ref{MainThm3}, we have the following
\begin{corollary}\label{GeneralizedBlowUpProblem}
There exists a solution pair to the $2D$ Boussinesq system \eqref{B1} -- \eqref{B2}, $(u,\rho)\in W^{1,\infty}([0,1)\times\overline\Omega)$ of finite energy such that $\limsup_{t\rightarrow 1} \V\nabla u(t)\V_{L^\infty}=+\infty$. 
\end{corollary}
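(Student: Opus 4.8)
Corollary \ref{GeneralizedBlowUpProblem} is a quick consequence of Theorem \ref{MainThm3}: the substance is already contained there, and what remains is to (i) normalize the blow-up time to $1$ by a rescaling, (ii) pass from the time-integrated criterion to pointwise-in-time blow-up of $\V\n u(t)\V_{L^\infty}$, and (iii) handle the regularity bookkeeping. For (i), let $(\o_0,\n\rho_0)$ and the corresponding local solution $(\o,\rho)$ on $[0,T^*)$ (with associated velocity and pressure $(u,p)$ and $T^*<\infty$) be as furnished by Theorem \ref{MainThm3}. The system \eqref{B1}--\eqref{B2} is invariant under the time dilation
\begin{equation*}
\tilde u(x,t):=T^* u(x,T^*t),\qquad \tilde\rho(x,t):=(T^*)^2\rho(x,T^*t),\qquad \tilde p(x,t):=(T^*)^2 p(x,T^*t),
\end{equation*}
which leaves $\Omega$ (and hence the no-penetration condition) literally unchanged, preserves the oddness/evenness hypotheses of Theorem \ref{MainThm1}, the compact support of the data, and the finiteness of the energy, and maps the lifespan $[0,T^*)$ to $[0,1)$; the full scaling symmetry of the system would serve equally well. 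A change of variables gives $\int_0^t\V\n\tilde u(s)\V_{L^\infty}\,ds=\int_0^{T^*t}\V\n u(\tau)\V_{L^\infty}\,d\tau$, so Theorem \ref{MainThm3} yields $\limsup_{t\to1}\int_0^t\V\n\tilde u(s)\V_{L^\infty}\,ds=+\infty$.

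For (ii), suppose toward a contradiction that $\limsup_{t\to1}\V\n\tilde u(t)\V_{L^\infty}=:M<\infty$, and pick $t_0<1$ with $\V\n\tilde u(t)\V_{L^\infty}\le M+1$ for $t\in[t_0,1)$. By Theorem \ref{MainThm1} one has $\tilde\o\in C([0,1);\mathring{C}^{0,\alpha}(\overline{\Omega}))$, and the elliptic estimates recalled in Section \ref{sec:prelim} --- together with the compact support and finite energy of the data, which control the far field of $\tilde u$ --- give $\V\n\tilde u(s)\V_{L^\infty}\lesssim\V\tilde\o(s)\V_{\mathring{C}^{0,\alpha}}$, a quantity locally bounded on $[0,1)$. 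Hence $\int_0^t\V\n\tilde u(s)\V_{L^\infty}\,ds\le\int_0^{t_0}\V\n\tilde u(s)\V_{L^\infty}\,ds+(M+1)$ would stay bounded as $t\to1$, contradicting the previous paragraph; therefore $\limsup_{t\to1}\V\n\tilde u(t)\V_{L^\infty}=+\infty$.

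For (iii), fix any $T<1$. On $[0,T]$, Theorem \ref{MainThm1} and the embedding $\mathring{C}^{0,\alpha}(\overline{\Omega})\hookrightarrow L^\infty(\overline{\Omega})$ put $\tilde u,\n\tilde u,\tilde\rho,\n\tilde\rho$ in $L^\infty([0,T]\times\overline{\Omega})$, and the same elliptic theory applied to the Neumann problem for the pressure puts $\n\tilde p$ there as well; then $\pr_t\tilde u=-\tilde u\cdot\n\tilde u-\n\tilde p-(\tilde\rho,0)$ and $\pr_t\tilde\rho=-\tilde u\cdot\n\tilde\rho$ are also in $L^\infty([0,T]\times\overline{\Omega})$. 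Thus $(\tilde u,\tilde\rho)\in W^{1,\infty}([0,T]\times\overline{\Omega})$ for every $T<1$ --- which is the sense in which $(\tilde u,\tilde\rho)\in W^{1,\infty}([0,1)\times\overline{\Omega})$ is meant, the time interval being half-open --- while the a priori bound $\V\tilde u(t)\V_{L^2}\le\V\tilde u_0\V_{L^2}+t\V\tilde\rho_0\V_{L^2}$ keeps the energy finite. Combining (ii) and (iii) and relabeling $(\tilde u,\tilde\rho)$ as $(u,\rho)$ proves the corollary.

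Since the corollary is a short deduction, there is no real obstacle at this stage; the essential work lies in Theorem \ref{MainThm3}. The only points needing care are the bookkeeping of the rescaling --- checking that the symmetry hypotheses, the compact support, and the finite energy all survive, and that the lifespan is correctly normalized to $1$ --- and the routine verification that the $\mathring{C}^{0,\alpha}$ theory of Section \ref{sec:prelim} controls $\n u$ and $\n p$ in $L^\infty$ on compact time intervals, including the elementary far-field argument that a compactly supported, finite-energy vorticity yields a bounded velocity field on $\Omega$.
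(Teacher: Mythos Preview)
The paper states this corollary without proof, simply as an immediate consequence of Theorem~\ref{MainThm3}. Your argument correctly supplies the details the paper leaves implicit: the time rescaling $\tilde u(x,t)=T^*u(x,T^*t)$, $\tilde\rho(x,t)=(T^*)^2\rho(x,T^*t)$ normalizes the blow-up time to $1$, and your contrapositive in step (ii) correctly converts the integrated criterion $\int_0^t\V\n u(s)\V_{L^\infty}\,ds\to\infty$ into the pointwise statement $\limsup_{t\to1}\V\n u(t)\V_{L^\infty}=+\infty$.

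One small remark on step (iii): the claim that $\nabla\tilde p\in L^\infty$ on compact time intervals is not literally an instance of the Dirichlet $\mathring{C}^\alpha$ estimates of Lemma~\ref{lem:C^circlealpha}; the pressure solves a Neumann problem, and the paper only records a weighted estimate \eqref{eq:pressure_bound} on $|x|^{-1}\nabla\tilde p$. That said, for the compactly supported data of Theorem~\ref{MainThm3} the right-hand side of the pressure equation is in $\mathring{C}^\alpha\cap C^\alpha$ with compact support, and an argument parallel to Lemma~\ref{lem:C^alpha} (using the explicit Neumann kernel $G_N$ from the uniqueness proof) gives $\nabla p\in L^\infty$ on each $[0,T]$; alternatively one can simply interpret the $W^{1,\infty}$ membership at the level of the vorticity formulation, which is closer to the spirit of the paper. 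This is a minor bookkeeping point, not a gap.
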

\begin{remark}
Theorem \ref{MainThm2} was established in our previous work \cite{EJSI}, when the domain is the entire plane, and vorticity is $m$-fold symmetric for some $m \ge 3$. This proof carries over to the current setup without much difficulty, with the logarithmic bound given in \eqref{eq:log}. 
\end{remark}
\begin{remark}
The strategy of the proof of Theorem \ref{MainThm3} is to first prove singularity formation for a certain class of infinite-energy solutions, and then show that the blow-up is stable with respect to certain kinds of perturbations (in particular, multiplication by a smooth cut-off function). The infinite energy solutions we construct are certainly not the first class of infinite-energy solutions which can be shown to be come singular in finite time \cite{SaWu}. However, ours seem to be the only known example which are stable under multiplication by a cut-off and which have bounded vorticity before the blow-up time. This seems to be essential to put the solutions into a natural uniqueness class. 
\end{remark}
\begin{remark}
We now make a few remarks regarding extensions of these results. 
\begin{itemize}
\item We note that the scale invariant solutions used to construct those solutions become singular on the whole line $x_1=x_2$, whereas the solutions are smooth away from $0$ before the blow-up time. It is possible that a more careful localization procedure can allow us to localize those scale-invariant solutions to $C^\infty$ solutions on $\mathbb{R}^2_+$ which become singular in finite time. 
\item It is very likely that a similar result holds for the $3D$ Euler equations and this is the subject of our forthcoming work. 
\item Theorems \ref{MainThm1} -- \ref{MainThm4} apply to all the spatial domains $\{(x_1,x_2)\in\mathbb{R}^2: \gamma|x_2| \leq x_1\}$ for $0 < \gamma <\infty$, with minor modifications (see Section \ref{sec:LWP}). In the case $\gamma > 1$, the extra assumptions that $\omega_0$ and $\partial_{x_2}\rho_0$ are odd and $\partial_{x_1}\rho_0$ is even in the local well-posedness result can be dropped, and moreover, $\omega$ and $\nabla\rho$ can actually be taken to be $C^{\alpha}$-regular and compactly supported (see Section \ref{sec:acutecase}). 
\item The data and solutions in Theorems \ref{MainThm1} -- \ref{MainThm4} can be taken to be as smooth as we like in the angular variable. 
\end{itemize}
\end{remark}

\subsection{Some ingredients of the proofs}

\subsubsection{Scale invariant solutions}
Let us recall that the Boussinesq system satisfies the following simple scaling property: whenever $(u(t,\cdot),\rho(t,\cdot))$ is a solution to the Boussinesq system \eqref{B1} -- \eqref{B2}, then $\frac{1}{\lambda} (u(t,\lambda\cdot),\rho(t,\lambda\cdot))$ is also a solution for any $\lambda>0$. This means that if $(u,\rho)$ can be placed in a natural existence/uniqueness class and if $\frac{1}{\lambda}(u_0(\lambda\cdot),\rho_0(\lambda\cdot))=(u_0,\rho_0)$ for all $\lambda>0$, then this property will be propagated so that $$\frac{1}{\lambda} (u(t,\lambda x),\rho(t,\lambda x))=(u(t,x),\rho(t,x))$$ for all $x\in\mathbb{R}^2$ and $\lambda,t>0.$ In particular, at a formal level, 1-homogeneity of $u$ and $\rho$ is propagated for all time. This means that such $u$ and $\rho$ will satisfy a $1D$ equation which will be easier to analyze than the full $2D$ system. We will prove here that this can be made rigorous and that blow-up for some finite-energy strong solutions to the Boussinesq system can be discovered this way. 
This is not the first work where scale-invariance is used in this way. In the context of the Navier-Stokes equation, the study of solutions arising from scale-invariant data is quite classical and goes back at least to Leray \cite{Leray34am}. In fact, Leray conjectured that such solutions could play a key role in the global regularity problem for the Navier-Stokes equation \cite{Leray34am}. It was later shown that self-similar blow-up for the Navier-Stokes equation is impossible under some very mild decay conditions in the important works \cite{NRS96} and \cite{TsaiSS98}. Of note also is that scale-invariant solutions to the $3D$ Navier-Stokes equation have been used to give sufficient conditions for non-uniqueness of Leray-Hopf weak solutions \cite{JiaSverak} (see also \cite{CP96}, \cite{JiaSverakSS}, \cite{TsaiFSS14}, and \cite{BradshawTsaiFSS17}). 
For fluid equations \emph{without} viscosity, it seems that the work \cite{EJSI} is the first where scale-invariant solutions were introduced, placed into natural local well-posedness classes, and analyzed. It is unclear why considering such solutions is not well-known in the community, even 250 years after Euler introduced his system! However, what we will show here is that they play a crucial role in the global regularity problem. They are the key to Theorem \ref{MainThm3}. 
\subsubsection{Monotonicity of the vorticity}
One of the further advantages of studying scale-invariant solutions is that they have been shown to propagate useful structures such as positivity and monotonicity \cite{EJSI}. Let us recall the equation for scale-invariant solutions to the $2D$ Euler equations:
\begin{empheq}[left = \empheqlbrace]{align}
\label{2dESI1} &\partial_t g +2G\partial_\theta g=0,\\
\label{2dESI2} &4G +\partial_{\theta\theta} G=g,
\end{empheq}
where we search for solutions which are $2\pi/m
$-periodic with $m\geq 3$ in order to be able to invert $\partial_{\theta\theta}+4$ in equation \eqref{2dESI2}. Here, the vorticity of the $2D$ fluid is just $\omega(t,r,\theta):=g(t,\theta)$. From the structure of \eqref{2dESI1}, it is clear that both a sign on $g$ and a sign\footnote{Note that $\partial_\theta g$ can only have a sign if we put solid boundaries.} on $\partial_\theta g$ can be propagated in time. Note that no such monotonicity is known on the vorticity for general $2D$ vorticities. Here we see that positivity of $\partial_\theta \omega$ can be propagated when $\omega$ is scale invariant. It would be remarkable if this could be extended to general $2D$ flows in a suitable sense. Having this type of monotonicity on the $1D$ solutions plays an important role in this work.  

\subsubsection{Scheme of the proof}

The proof of Theorems \ref{MainThm1} -- \ref{MainThm4} relies on several important observations. First, in view of the results on ill-posedness for the Boussinesq system at critical regularity \cite{EM1}, we must first find a small enough space to prove local well-posedness but a large enough one to accommodate $1-$homogeneous velocity fields. This is achieved through considering a certain class weighted H\"older spaces and proving sharp elliptic estimates on these spaces. It is important to remark that without any symmetry in the domain, such estimates are not possible. To achieve these estimates, we have to carefully study the properties of the Green's function on sectors  which is constructed using conformal mapping. The important point is that the Green's function on a sector has much better decay properties that the usual Newtonian potential since the conformal map of the sector onto the whole space sends $z \mapsto z^{\alpha}$ and $\alpha>2.$ This observation is essential in the local well-posedness argument. The second important aspect of the proof is the analysis of solutions with $1-$homogeneous velocity and density. Such solutions satisfy the $1+1$ dimensional system for two scalar quantities $g$ and $P.$ First one must prove that $g$ and $P$ satisfy certain symmetry and monotonicity properties so long as they exist. It is not at all obvious how these symmetry properties could be derived for the original $2D$ system. Then, one can show that a certain combination of quantities satisfies a Ricatti-type ODE which becomes singular in finite time. The third crucial step in the proof is the cut-off argument. To now, we have proven blow-up for $1-$homogeneous solutions. We should remark that this is interesting in itself since such solutions belong to a natural existence and uniqueness class for the equation---which already separates them from previous infinite energy blow-up proofs. However, it is desirable that the blow-up be for finite energy solutions with bounded density since these are conserved quantities for the system. This leads one to consider data which is locally $1-$homogeneous near $0$ but compactly supported or rapidly decaying away at infinity. This leads to a considerable error which must be controlled. How this error is controlled is based on two observations. The first is that the velocity field of a $C^\alpha$ vorticity which vanishes near $0$ is vanishing of order $|x|^{1+\alpha}$. This is again due to the rapid decay of the Green's function on a sector.  The second observation is a simple product rule which is contained in Lemma \ref{lem:Holder_product}.

\subsection{Previous results}
The Boussinesq system seems to first have been derived by Rayleigh in 1916 \cite{Rayleigh1916} to model the motion of a buoyant fluid and to explain what came to be known as Rayleigh-B\'enard convection. Since then, the system has been used in many different contexts such as Rayleigh-B\'enard convection, atmospheric dynamics, oceanic dynamics, and as a model of the $3D$ Euler equations (see the books \cite{Majda03} and \cite{DoeringGibbon}). For this reason, there are many works devoted to the Boussinesq system and we will only cite a few belonging to three general categories all related to the blow-up problem: analytical works, numerical works, and works on simplified models of the Boussinesq system. 
\subsubsection*{Analytical works on the inviscid Boussinesq system}
Most of the works on the inviscid Boussinesq system are local-in-time results for suitably regular solutions. Local well-posedness for smooth solutions and various blow-up criteria in subcritical Besov and Sobolev spaces have been established by a number of authors (see \cite{EShu94}, \cite{ChaeNam97}, \cite{BCD}, and \cite{Danchin2013}). Though, it appears to be an open problem to determine whether the classical Beale-Kato-Majda criterion even applies to the Boussinesq system (see \cite{WuLectureNotes} and \cite{HORY2016}). Local well-posedness for Yudovich type data or vortex patch type data has been investigated in various works (\cite{HassainiaHmidi}, \cite{DanchinPaicu}) though the \emph{inviscid} Boussinesq system has actually been shown to be ill-posed in the Yudovich class \cite{EM1}. There have also been numerous works on the $2D$ Boussinesq system with different kinds of dissipative mechanisms which are known to model different physical scenarios. For example, there are works with the inclusion of viscosity or partial viscosity into either or both equations of the Boussinesq system (\cite{Chae2006}, \cite{DanchinPaicu2011}, \cite{CaoWu2013}, \cite{LiTiti16}) and the inclusion of fractional dissipation in either or both equations (\cite{HKR1}, \cite{HKR2}, \cite{CV}). We also mention the work of Chae, Constantin, and Wu \cite{CCW} where it is proven that the gradient of the density profile for solutions to the Boussinesq equation may grow exponentially in time in the same type of domain that we consider. 
\subsubsection*{Models of the Boussinesq system}
In the last few years, particularly after the important works of Hou and Luo \cite{HL} and Kiselev and Sverak \cite{KS}, the idea of using the boundary to better control singularity formation blossomed. Thereafter, a number of model equations have been put forth to model both the axi-symmetric $3D$ Euler equations and the Boussinesq system. Some of these models are one-dimensional and are based on asymptotic expansions of the velocity, vorticity, and/or density near the boundary. Examples are the models of Hou and Luo \cite{HLModel} and Choi, Kiselev, and Yao \cite{CKY}. Unfortunately, there does not seem to be any clear way to pass from results for the $1D$ models to the full $2D$ models. This is an important point which differentiates this work: via using scaling-invariance it is possible to pass from $1D$ results to $2D$ or $3D$ results. Another class of models follow, in spirit, the model introduced by Constantin, Lax, and Majda \cite{CLM}. Finally, we mention that T. Tao has devised some different models of the $3D$ Euler equations which blow-up in finite time \cite{TaoEuler}. These models share some structural similarities with the $3D$ Euler equations -- particularly the conservation of energy -- and they indicate that the obvious conservation laws are not enough to ensure global regularity.  
\subsubsection*{$2D$ Euler on domains with acute corners}
Since Theorems \ref{MainThm1} -- \ref{MainThm4} are set on a domain with a single corner, we discuss some of the relevant works on the $2D$ Euler equations on domains with corner. We do this to emphasize that the singularity formation we prove in Theorems \ref{MainThm3} and \ref{MainThm4} is neither coming from the regularity of the data nor from the corner but from a genuine non-linear cascade in the Boussinesq system. 
Recall that for domains with smooth boundary, Yudovich has shown (\cite{Y1}) that for given $\omega_0 \in L^1\cap L^\infty$, there is a unique global solution to the $2D$ Euler equations with bounded vorticity. Yudovich's theorem have been successfully extended to domains with corners. We mention the works of Bardos-Di Plinio-Temam \cite{BDT}, Lacave-Miot-Wang \cite{LMW},  Lacave \cite{L}, and Di Plinio-Temam \cite{DT}. The references \cite{LMW} and \cite{L} are based on straightening out the domain using a bi-holomorphism and studying the behavior of the Biot-Savart law under this transformation. The works \cite{BDT} and \cite{DT} seem to be inspired by Grisvard's important work on elliptic problems in singular domains \cite{Gris} and deal directly with estimates for certain kinds of singular integrals to extend the Yudovich theory. These works show that the Yudovich theory extends completely to polygonal domains whenever the angles are less than or equal to $\pi/2$ and this is exactly the setting we are in. Likewise, in the case of acute angles, the $C^{k,\alpha}$-theory in smooth domains extends to polygonal domains (this is a by-product of the well-posedness theory in Section \ref{sec:LWP} -- there is a restriction on the range of $k,\alpha$ depending on the angle). In more exotic domains, singularity formation for the $2D$ Euler equations is possible \cite{KZ} but we do not consider such domains here since they seem to have little bearing on the actual dynamics of strong solutions to the Euler equations on more regular domains.  We also remark that while we do consider corner domains with obtuse angles as well, we impose an odd reflection symmetry on the vorticity in that case which precludes any problems at the level of $2D$ Euler as well. It is likely that if one were to consider the $2D$ Euler equation on a domain with an \emph{obtuse} corner but without an odd symmetry then there is a type of ill-posedness or "finite-time singularity" which could occur but this is unrelated to the present work.  

\subsubsection*{Numerical Works}
We close this part by mentioning a few of the numerous numerical works on the Boussinesq system and the axi-symmetric $3D$ Euler system. Work of Pumir and Siggia seem to indicate singularity formation for the $3D$ axi-symmetric Euler equations \cite{PumirSiggia}. For the Boussinesq system is that of E and Shu \cite{EShu94} where no singularity formation is observed numerically for initial data similar to that of Pumir and Siggia. We also refer the reader to the survey paper of Gibbon \cite{Gibbon} and the recent important work of Hou and Luo \cite{HouLuo} for a more in-depth discussion.

\subsection{Organization of the paper} 
The rest of this paper is organized as follows. In Section \ref{sec:prelim}, we define precisely the scale invariant H\"{o}lder spaces, and prove a few simple properties regarding functions belonging to such spaces.  Then, in Section \ref{sec:LWP}, we prove our first main result, Theorem \ref{MainThm1}, which is the local well-posedness result for $2D$ Boussinesq in the scale of $\mathring{C}^{k,\alpha}$-spaces. For this purpose, it is essential to have sharp H\"older estimates on domains with corners, and this issue is discussed in detail in Subsection \ref{subsec:gris}. The proof of local well-posedness for the $1D$ system is given as well. After that, in Section \ref{sec:1D}, we analyze the $1D$ system in some detail, and prove in particular that there is finite time blow-up for a wide class of smooth initial data. In Sections \ref{sec:compact_blowup} and \ref{sec:smooth_blowup}, we prove Theorems \ref{MainThm3} and \ref{MainThm4}, respectively. These results are based on the blow-up for the $1D$ system and yet another local well-posedness result proved in Section \ref{sec:compact_blowup} (Theorem \ref{thm:1Dplus2D}), which says that a solution to the $2D$ Boussinesq system must blow-up if its ``scale-invariant part'' blows up in finite time. Finally, in Section \ref{sec:acutecase}, we prove that when the angle is acute, the blow-up happens for uniformly (up to the corner) H\"older continuous and compactly supported vorticity. 
\subsection*{Notations}
As it is usual, we use letters $C, c,\cdots$ to denote various positive absolute constants whose values may vary from a line to another. We write functions depending on time and space as $f(t,\cdot) = f_t(\cdot)$, and partial derivatives in time and space are respectively denoted by $\partial_t f$ and $\partial_{x_i} f$, where $i = 1, 2$. We shall often use the polar coordinates system, with the usual convention that $x_1 = r\cos\theta$ and $x_2 = r\sin\theta$. Partial derivatives with respect to $r$ and $\theta$ are denoted by $\pr_r$ and $\pr_{\theta}$, respectively. For a scalar function $f$ defined on a subset of $\mathbb{R}^2$, we use the notation $\nabla f = (\partial_{x_1}f, \partial_{x_2}f)^T$ as well as $\nabla^\perp f = (-\partial_{x_2}f , \partial_{x_1}f)^T$. The symbol $\perp$ represents the counter-clockwise rotation by $\pi/2$ in the plane. 

\section{Preliminaries}\label{sec:prelim}
In this section, let $D$ be some subset of the plane. Then, the scale-invariant H\"{o}lder spaces are defined as follows:
\begin{definition}
Let $0 < \alpha \le 1$. Given a function $f \in C^0(D\backslash\{0\})$, we define the $\mathring{C}^{0,\alpha}(\overline{D}) = \mathring{C}^{\alpha}(\overline{D})$-norm by \begin{equation*}
\begin{split}
\V f \V_{\mathring{C}^{\alpha}(\overline{D})} &:= \V f \V_{L^\infty(\overline{D})} + \V |\cdot|^{\alpha}f \V_{{C}_*^\alpha(\overline{D})} \\
& := \sup_{x \in\overline{ D}} |f(x)| + \sup_{x,x' \in \overline{D}, x \ne x'} \frac{||x|^\alpha f(x)- |x'|^\alpha f(x')|}{|x-x'|^\alpha}.
\end{split}
\end{equation*} Then, for $k \ge 1$, we define $\mathring{C}^{k,\alpha}$-norms for $f \in C^k(D\backslash \{0\})$ by \begin{equation*}
\begin{split}
\V f \V_{ \mathring{C}^{k,\alpha}(\overline{D})} := \V f \V_{ \mathring{C}^{k-1,1}(\overline{D})} + \V |\cdot|^{k+\alpha} \nabla^k f \V_{{C}_*^\alpha(\overline{D})}.
\end{split}
\end{equation*} Here, $\nabla^k f$ is a vector consisting of all expressions of the form $\pr_{x_{i_1}} \cdots \pr_{x_{i_k}} f$ where $i_1,\cdots i_k \in \{ 1, 2\}$. Finally, we may define the space $\mathring{C}^\infty$ as the set of functions belonging to all $\mathring{C}^{k,\alpha}$: \begin{equation*}
\begin{split}
\mathring{C}^\infty := \cap_{k \ge 0, 0 < \alpha \le 1} \mathring{C}^{k,\alpha}.
\end{split}
\end{equation*}
\end{definition}
\begin{remark} From the definition, we note that:
\begin{itemize}
\item Let $D = \{ (r,\theta) : r > 0, \theta_1 < \theta < \theta_2 \}$. For a radially homogeneous function $f$ of degree zero, that is, $f(r,\theta) = \tilde{f}(\theta)$ for some function $\tilde{f}$ defined on $[\theta_1,\theta_2]$, we have \begin{equation*}
\begin{split}
\V f \V_{ \mathring{C}^{k,\alpha}(\overline{D})} = \V \tilde{f} \V_{C^{k,\alpha}[\theta_1,\theta_2]}. 
\end{split}
\end{equation*} Similarly, $f \in \mathring{C}^\infty(\overline{D})$ if and only if $\tilde{f} \in C^\infty[\theta_1,\theta_2]$.
\item If $f$ is bounded, then $\V |\cdot|^\alpha f\V_{{C}_*^\alpha} < + \infty$ if and only if (assuming that $|x'| \le |x|$) \begin{equation*}
\begin{split}
\sup_{x \ne x', |x-x'|\le c|x|} \frac{ |x|^\alpha  }{|x-x'|^\alpha}|f(x) - f(x')| < + \infty 
\end{split}
\end{equation*} for some $c > 0$. 
\end{itemize}
\end{remark}
\begin{lemma}[Product rule]\label{lem:Holder_product}
Let $f \in C^\alpha$ with $f(0) = 0$ and $h \in \mathring{C}^\alpha$. Then, we have the following product rule: \begin{equation}\label{eq:Holder_product}
\begin{split}
\V fh\V_{C^\alpha} \le C\V h\V_{\mathring{C}^\alpha} \V f \V_{C^\alpha}.
\end{split}
\end{equation}
\end{lemma}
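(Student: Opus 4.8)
The plan is to estimate the two pieces of the $C^\alpha$-norm of $fh$ directly, namely $\V fh\V_{L^\infty}$ and the seminorm $[fh]_{C^\alpha}$, using the hypothesis $f(0)=0$ to upgrade the mere boundedness of $h$ into a weighted bound. The key elementary observation is that $f(0)=0$ together with $f\in C^\alpha$ gives the pointwise bound $|f(x)| \le [f]_{C^\alpha}|x|^\alpha \le \V f\V_{C^\alpha}|x|^\alpha$, so that $f$ vanishes at the origin at rate $|x|^\alpha$. This is exactly the rate at which the weight $|\cdot|^\alpha$ in the definition of $\mathring{C}^\alpha$ is "missing" a power, so the product $fh$ should land back in the unweighted space $C^\alpha$.

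First I would bound the sup norm: since $|f(x)h(x)| \le \V f\V_{C^\alpha}|x|^\alpha \cdot |h(x)|$, and $|x|^\alpha|h(x)| \le \V |\cdot|^\alpha h\V_{L^\infty}$... but wait, $|\cdot|^\alpha h$ need not be bounded on an unbounded domain. The cleaner route, which I would actually take, is to localize: by the second bullet of the Remark, it suffices to control the difference quotient over pairs with $|x-x'| \le c|x|$ (say $c = 1/2$, so that $|x'| \ge |x|/2$ when $|x'|\le |x|$), plus the $L^\infty$ bound. For the $L^\infty$ bound I would split into $|x| \le 1$ and $|x|\ge 1$: on $|x|\le 1$ one has $|x|^\alpha \le 1$ so $|f(x)h(x)|\le \V f\V_{C^\alpha}\V h\V_{L^\infty}$; for $|x|\ge 1$ one instead writes $|f(x)h(x)| = ||x|^\alpha f(x)|\cdot |x|^{-\alpha}|h(x)| \le \V|\cdot|^\alpha f\V_{L^\infty}\V h\V_{L^\infty} \le \V f\V_{C^\alpha}\V h\V_{\mathring C^\alpha}$, using $|x|^{-\alpha}\le 1$ and the definition of the $\mathring C^\alpha$ norm. (If the domain $D$ is the bounded localization region only the first case is needed; in general one keeps both.)

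Next, the seminorm. Fix $x\ne x'$ with, without loss of generality, $|x'|\le |x|$ and, by the localization above, $|x-x'|\le |x|/2$. Write the standard splitting
\begin{equation*}
f(x)h(x) - f(x')h(x') = \big(f(x)-f(x')\big)h(x) + f(x')\big(h(x)-h(x')\big).
\end{equation*}
For the first term: $|f(x)-f(x')| \le [f]_{C^\alpha}|x-x'|^\alpha$ and $|h(x)| \le |x|^{-\alpha}\V|\cdot|^\alpha h\V_{L^\infty}$; combining with $|x-x'|^\alpha \le |x|^\alpha/2^\alpha$ would actually lose the needed factor, so instead I keep $|h(x)|\le |x|^{-\alpha}\V|\cdot|^\alpha h\V_{L^\infty}$ and note $|x-x'|^\alpha\cdot|x|^{-\alpha} \le 1$ is false in general — rather I use $|x-x'|^\alpha \le |x-x'|^\alpha$ and simply bound $|h(x)| \le \V h\V_{L^\infty}$ when $|x|\le 1$, and $|h(x)|\le |x|^{-\alpha}\V h\V_{\mathring C^\alpha} \le \V h\V_{\mathring C^\alpha}$ when $|x|\ge 1$; either way $|h(x)| \le \V h\V_{\mathring C^\alpha}$ pointwise after the $L^\infty$ part of the $\mathring C^\alpha$ norm is invoked, giving $|f(x)-f(x')||h(x)| \le \V f\V_{C^\alpha}\V h\V_{\mathring C^\alpha}|x-x'|^\alpha$ directly. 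For the second term, I use $|f(x')| \le [f]_{C^\alpha}|x'|^\alpha \le \V f\V_{C^\alpha}|x|^\alpha$ (since $f(0)=0$ and $|x'|\le|x|$), and then
\begin{equation*}
|x|^\alpha|h(x)-h(x')| = \frac{|x|^\alpha|h(x)-h(x')|}{|x-x'|^\alpha}|x-x'|^\alpha \le \V|\cdot|^\alpha h\V_{C_*^\alpha}|x-x'|^\alpha \le \V h\V_{\mathring C^\alpha}|x-x'|^\alpha,
\end{equation*}
where the middle inequality is precisely the definition of $\V|\cdot|^\alpha h\V_{C_*^\alpha}$ applied to the pair $(x,x')$ — here it is important that we are in the regime $|x'|\le|x|$, which is the convention under which that seminorm controls the quotient (and the localization $|x-x'|\le |x|/2$ is harmless since we only ever need an upper bound). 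Adding the two contributions gives $[fh]_{C^\alpha} \le C\V f\V_{C^\alpha}\V h\V_{\mathring C^\alpha}$, and combining with the $L^\infty$ bound completes the proof.

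The only genuinely delicate point — the "main obstacle," such as it is — is bookkeeping the weights so that the vanishing $|f(x')|\lesssim |x'|^\alpha \le |x|^\alpha$ exactly cancels the $|x|^{\alpha}$ deficit in $h$'s difference quotient; once one commits to the convention $|x'|\le|x|$ throughout and reduces to the local regime $|x-x'|\le c|x|$ via the Remark, every term matches up and no cancellation or oscillation estimate is needed. There is no subtlety at the corner of $\Omega$ or at infinity beyond the elementary split $|x|\lessgtr 1$ used for the $L^\infty$ bound.
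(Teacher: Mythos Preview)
Your overall strategy is correct and close to the paper's, but two points need attention. First, the $L^\infty$ step: your claim that $\V|\cdot|^\alpha f\V_{L^\infty} \le \V f\V_{C^\alpha}$ for $|x|\ge 1$ is false (take $f$ equal to $1$ outside the unit ball). This is a detour anyway --- the paper just writes $\V fh\V_{L^\infty} \le \V f\V_{L^\infty}\V h\V_{L^\infty}$ and moves on. Second, your inequality $|x|^\alpha|h(x)-h(x')| \le \V|\cdot|^\alpha h\V_{C_*^\alpha}|x-x'|^\alpha$ is \emph{not} ``precisely the definition'': the $\mathring{C}^\alpha$ seminorm controls $\bigl||x|^\alpha h(x)-|x'|^\alpha h(x')\bigr|$, not $|x|^\alpha|h(x)-h(x')|$. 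To bridge the two you must insert $\pm|x'|^\alpha h(x')$ and use $\bigl||x|^\alpha-|x'|^\alpha\bigr|\le|x-x'|^\alpha$, picking up an extra $\V h\V_{L^\infty}$ term. Once you do this the bound is $C\V h\V_{\mathring{C}^\alpha}$ and your argument closes.

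The paper's proof makes exactly this correction explicit, and thereby avoids your localization to $|x-x'|\le c|x|$ altogether: it writes the single identity
\[
\frac{f(x)h(x)-f(x')h(x')}{|x-x'|^\alpha}
= h(x)\,\frac{f(x)-f(x')}{|x-x'|^\alpha}
+ \frac{f(x')}{|x'|^\alpha}\left(\frac{|x|^\alpha h(x)-|x'|^\alpha h(x')}{|x-x'|^\alpha}
+ \frac{|x'|^\alpha-|x|^\alpha}{|x-x'|^\alpha}\,h(x)\right),
\]
in which every factor is manifestly bounded for \emph{all} $x\ne x'$ (using $|f(x')|/|x'|^\alpha\le[f]_{C^\alpha}$ from $f(0)=0$, and $\bigl||x'|^\alpha-|x|^\alpha\bigr|\le|x-x'|^\alpha$). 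This is the same decomposition as yours plus the missing correction term, packaged so that no case analysis is needed.
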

\begin{proof}
Clearly we have that $\V fh\V_{L^\infty} \le \V f \V_{L^\infty} \V h \V_{L^\infty}$. Then, take two points $x \ne x'$ and note that \begin{equation*}
\begin{split}
\frac{f(x)h(x) - f(x')h(x')}{|x-x'|^\alpha} = h(x)\frac{f(x) - f(x')}{|x-x'|^\alpha} + \frac{f(x')}{|x'|^\alpha} \cdot \left( \frac{|x|^\alpha h(x) - |x'|^\alpha h(x')}{|x-x'|^\alpha} + \frac{|x'|^\alpha - |x|^\alpha}{|x-x'|^\alpha} h(x) \right)
\end{split}
\end{equation*} holds. The desired bound follows immediately. 
\end{proof}
\begin{remark}
Note that $C^\alpha\not\subset \mathring{C}^\alpha$ since functions belonging to $\mathring{C}^\alpha$ must, in a sense, have decaying derivatives. For example, a function $f\in \mathring{C}^{0,1}$ if and only if it is uniformly bounded and satisfies $|\nabla f(x)|\lesssim \frac{1}{|x|}$ almost everywhere. Of course, any compactly supported $C^\alpha$ function belongs to $\mathring{C}^\alpha$. 
\end{remark}
\section{Local well-posedness results}\label{sec:LWP}
In this section, we prove that the $2D$ Boussinesq system \eqref{B1} -- \eqref{B2} is locally well-posed in the scale of spaces $\mathring{C}^{\alpha}(\overline{\Omega})$. The key is to have sharp $C^\alpha$ and $\mathring{C}^\alpha$-estimates on sectors for the singular integral operators $\nabla^2(-\Delta)^{-1}$. We further prove a borderline inequality similar to the Kato inequality used in \cite{BKM} to deduce that the $2D$ Euler system is globally well-posed for odd vorticity in $\mathring{C}^\alpha(\bar\Omega).$  
\subsection{H\"{o}lder estimates for sectors}\label{subsec:gris}

We consider functions defined on the sectors \begin{equation*}
\begin{split}
\Omega_\beta := \{ (r,\theta) : 0 < \theta < \beta\pi \}
\end{split}
\end{equation*} for $0 < \beta < 1/2$. The positive quadrant corresponds to the case $\beta = 1/2$. Since we are interested in solving $\Delta \Psi = f$ on $\Omega_\beta$ with Dirichlet boundary conditions, we may as well view $f$ as defined on $\{ (r,\theta) : -\beta\pi < \theta < \beta\pi \}$ as an odd function with respect to the axis $x_2 = 0$. 

We shall take advantage of the explicit form for the Green's function on sectors:
\begin{lemma}\label{lem:Green}
	The Dirichlet Green's function on $\Omega_\beta$ is given by \begin{equation*}
	\begin{split}
	G_\beta(z,w) = \frac{1}{2\pi} \ln\left( \frac{|z^{1/\beta}-w^{1/\beta}|}{|\overline{z^{1/\beta}}-w^{1/\beta}|} \right).
	\end{split}
	\end{equation*} Similarly, the Dirichlet Green's function on  $\Omega_\beta \cap \{ r <R \}$ takes the form \begin{equation*}
	\begin{split}
	G_\beta^R(z,w) = \frac{1}{2\pi}  \ln\left( \frac{|z^{1/\beta} - w^{1/\beta}|}{|\overline{z^{1/\beta}} - w^{1/\beta}|} \cdot \frac{|R^2 - z^{1/\beta}w^{1/\beta}|}{|R^2 - \overline{z^{1/\beta}}w^{1/\beta}|} \right) .
	\end{split}
	\end{equation*}
\end{lemma}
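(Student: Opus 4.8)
The plan is to exhibit the claimed formulas explicitly and verify that each one is harmonic in $z$ away from $w$, has the correct logarithmic singularity at $z = w$, and vanishes on the relevant boundary. The key device is the conformal map $\varphi(z) = z^{1/\beta}$, which sends the sector $\Omega_\beta = \{0 < \arg z < \beta\pi\}$ bijectively and conformally onto the upper half-plane $\mathbb{H} = \{\mathrm{Im}\, w > 0\}$ (here I use the principal branch, which is legitimate since $0 < \beta < 1/2$ means the sector has opening angle $<\pi/2$, so $\varphi$ is single-valued and maps into an angular wedge of opening $<\pi$; the two boundary rays $\theta = 0$ and $\theta = \beta\pi$ go to the positive and negative real axes). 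Since the Dirichlet Green's function is a conformal invariant, $G_{\Omega_\beta}(z,w) = G_{\mathbb{H}}(\varphi(z),\varphi(w))$, and the Green's function of the upper half-plane is the classical one obtained from the method of images: $G_{\mathbb{H}}(\zeta,\eta) = \frac{1}{2\pi}\ln\frac{|\zeta - \eta|}{|\zeta - \bar\eta|}$. Substituting $\zeta = z^{1/\beta}$, $\eta = w^{1/\beta}$, and noting $\overline{\eta} = \overline{w^{1/\beta}}$, gives exactly the stated formula for $G_\beta$.

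For the truncated sector $\Omega_\beta \cap \{r < R\}$, I would compose one more conformal map: $\varphi$ sends this region to $\mathbb{H} \cap \{|\zeta| < R\}$, the half-disk of radius $R$. The Green's function of the half-disk is again obtained by reflections — now one needs the image across the real axis (giving the factor $|\zeta - \bar\eta|^{-1}$ as before) and the Kelvin inversion $\zeta \mapsto R^2/\bar\zeta$ across the circle $|\zeta| = R$ (giving the factors involving $R^2 - \zeta\bar\eta$ and $R^2 - \bar\zeta\bar\eta$, up to constants that cancel in the ratio). Concretely, $G_{\mathbb{H}\cap\{|\zeta|<R\}}(\zeta,\eta) = \frac{1}{2\pi}\ln\left(\frac{|\zeta-\eta|}{|\zeta-\bar\eta|}\cdot\frac{|R^2 - \zeta\bar\eta|}{|R^2-\bar\zeta\bar\eta|}\right)$; one checks this vanishes on the real segment $(-R,R)$ and on the arc $|\zeta| = R$ by direct computation (on $|\zeta| = R$ one uses $\bar\zeta = R^2/\zeta$). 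Pulling back via $\zeta = z^{1/\beta}$, $\eta = w^{1/\beta}$ and using that $\overline{z^{1/\beta}}$ plays the role of $\bar\zeta$ yields the second displayed formula.

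The remaining verification is routine: harmonicity in $z$ follows because $\ln|\,\cdot\,|$ is harmonic and precomposition with a holomorphic map preserves harmonicity; the singularity is correct because near $z = w$ one has $z^{1/\beta} - w^{1/\beta} \sim \frac{1}{\beta} w^{1/\beta - 1}(z - w)$, so $\frac{1}{2\pi}\ln|z^{1/\beta} - w^{1/\beta}| = \frac{1}{2\pi}\ln|z - w| + (\text{harmonic})$, matching the normalization of the fundamental solution $\frac{1}{2\pi}\ln|z-w|$ of $\Delta$ in the plane; and the boundary values vanish because on $\theta = 0$ the quantity $z^{1/\beta}$ is real positive, hence equals its conjugate, forcing the argument of the logarithm to be $1$, and similarly on $\theta = \beta\pi$ one gets $z^{1/\beta}$ real negative, again equal to its own conjugate. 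I expect no serious obstacle here; the one point demanding a little care is confirming single-valuedness and the correct branch of $z^{1/\beta}$ on the closed sector (including the subtlety that $w^{1/\beta}$ must be interpreted consistently), and checking the boundary-vanishing on the circular arc in the truncated case, where the algebraic identity $|R^2 - \zeta\bar\eta| = |R^2 - \bar\zeta\bar\eta|$ when $|\zeta| = R$ needs the substitution $\bar\zeta = R^2/\zeta$ to be spelled out.
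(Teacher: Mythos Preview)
The paper states this lemma without proof, treating it as a classical fact obtained by conformal mapping; your approach via the pullback of the half-plane (respectively half-disk) Green's function under $\varphi(z)=z^{1/\beta}$ is exactly the standard argument and is correct in outline.

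Two small points to clean up. First, your displayed formula for the half-disk Green's function has the second factor inverted: the correct expression is
\[
G_{\mathbb{H}\cap\{|\zeta|<\tilde R\}}(\zeta,\eta)=\frac{1}{2\pi}\ln\left(\frac{|\zeta-\eta|}{|\zeta-\bar\eta|}\cdot\frac{|\tilde R^{2}-\zeta\eta|}{|\tilde R^{2}-\zeta\bar\eta|}\right),
\]
as one sees from the four-image construction (source $\eta$, reflections $\bar\eta$, $\tilde R^{2}/\bar\eta$, $\tilde R^{2}/\eta$). Had you actually carried out the arc check you outline, the substitution $\bar\zeta=\tilde R^{2}/\zeta$ would have revealed that your version does \emph{not} vanish on $|\zeta|=\tilde R$. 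Second, the map $\varphi$ sends $\{|z|<R\}$ to $\{|\zeta|<R^{1/\beta}\}$, so the half-disk radius is $\tilde R=R^{1/\beta}$; the paper's formula writes $R^{2}$ where $R^{2/\beta}$ is meant, but since the only use of $G_\beta^R$ in the paper is a decay estimate as $R\to\infty$ in the uniqueness argument, this is immaterial there.
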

In the above lemma, we are using the convention $z^\gamma := r^\gamma e^{i\theta\gamma}$ for $z = re^{i\theta}$ with $-\pi/2 < \theta < \pi/2$. The bar denotes the complex conjugate. Differentiating the Green's kernel in $z$, we obtain \begin{equation*}
\begin{split}
K_\beta(z,w) := \pr_z G_\beta(z,w) = - \frac{z^{1/\beta -1}}{4\pi\beta} \cdot \frac{\overline{w^{1/\beta}} - w^{1/\beta}}{(z^{1/\beta} - \overline{w^{1/\beta}})(z^{1/\beta} - w^{1/\beta})},
\end{split}
\end{equation*} and note that the partial derivatives $\partial_{x_1}\Psi$ and $\partial_{x_2}\Psi$ are given by the real and imaginary parts of $K_\beta * f$, respectively. 

The following lemma asserts that there is always a unique solution to the Poisson problem for the sector $\Omega_\beta$, even for functions not necessarily decaying at infinity. This is a generalization of the uniqueness statement given in \cite[Lemma 2.6]{EJSI} for sectors of angle $2\pi/m$ where $m$ is an integer larger than $2$.
\begin{lemma}[Existence and uniqueness for the Poisson problem]\label{lem:Uniqueness_Poisson}
	Let $f$ be a bounded function in the sector $\Omega_\beta$. Then there exists a unique solution to \begin{equation*}
\begin{cases}
		 \Delta\Psi  = f  \qquad&\mbox{in}\qquad \Omega_\beta,\\
	 \Psi = 0 \qquad&\mbox{on}\qquad \partial\Omega_\beta,
	\end{cases}
	\end{equation*} with $\Psi \in W^{2,p}_{loc}$ for all $p < \infty$ and $|\Psi(x)| \le C|x|^2 \ln(e + |x|)$. 
\end{lemma}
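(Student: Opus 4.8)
The plan is to construct the solution explicitly by convolution against the Green's kernel $K_\beta$ from Lemma~\ref{lem:Green} after a suitable decomposition of $f$, and then to prove uniqueness in the stated growth class via a Phragmén–Lindelöf / maximum-principle argument. For existence, first I would split $f = f\mathbf{1}_{\{|x|\le 1\}} + f\mathbf{1}_{\{|x|>1\}} =: f_0 + f_\infty$. The piece $f_0$ is bounded and compactly supported, so the Newtonian-type potential built from $G_\beta$ gives a solution $\Psi_0$ with the claimed local $W^{2,p}$ regularity (Calderón–Zygmund on the kernel $\partial_z K_\beta$, which away from the corner behaves like the standard second-derivative Riesz kernel, and near the corner is even better by the $z^{1/\beta-1}$ factor since $1/\beta > 2$); moreover $\Psi_0$ inherits at worst the growth of the free-space Newtonian potential of a compactly supported bounded function, i.e. $O(|x|\ln|x|)$ at infinity, which is well within the allowed bound. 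For $f_\infty$, I would not try to convolve globally; instead I would exhaust $\Omega_\beta$ by the truncated sectors $\Omega_\beta \cap \{r < R\}$, solve the Dirichlet problem there using the explicit $G_\beta^R$ of Lemma~\ref{lem:Green} (standard bounded-domain theory gives existence), obtain uniform local $W^{2,p}$ bounds and the a priori growth estimate $|\Psi^{(R)}(x)| \le C|x|^2\ln(e+|x|)$ with $C$ independent of $R$ — this is where one barrier function does the work — and then pass to the limit $R \to \infty$ along a subsequence using local compactness. Adding the two pieces yields the desired $\Psi$.

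The a priori growth bound, both for the truncated problems and for uniqueness, is the heart of the matter and the step I expect to be the main obstacle. The idea is a barrier: since $\|f\|_{L^\infty} =: M < \infty$, I want an explicit supersolution $\Phi$ on $\Omega_\beta$ with $\Delta\Phi \le -M$, $\Phi \ge 0$ on $\partial\Omega_\beta$, and $\Phi(x) \lesssim |x|^2\ln(e+|x|)$. A natural candidate is something like $\Phi(x) = C\big(|x|^2 + \varepsilon\big)\ln(e+|x|) \cdot \chi(\theta)$ where $\chi$ is a smooth positive function on $[0,\beta\pi]$ vanishing at the endpoints, chosen so that the angular part of the Laplacian contributes a definite negative term; one must check that for $C$ large (depending only on $M$, $\beta$) the Laplacian of $\Phi$ is $\le -M$ throughout the sector, including near the corner and near infinity where the logarithmic factor slightly spoils homogeneity. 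Then $\Psi \le \Phi$ and $\Psi \ge -\Phi$ by the maximum principle on each $\Omega_\beta\cap\{r<R\}$ (the boundary values on $\{r = R\}$ are controlled by the same bound), giving the claimed estimate uniformly.

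For uniqueness, suppose $\Psi_1, \Psi_2$ are two solutions in the stated class; then $w := \Psi_1 - \Psi_2$ is harmonic in $\Omega_\beta$, vanishes on $\partial\Omega_\beta$, and satisfies $|w(x)| \le C|x|^2\ln(e+|x|)$. After odd reflection across $\{x_2 = 0\}$, $w$ extends to a harmonic function on the sector of angle $2\beta\pi < \pi$ vanishing on its boundary. A Phragmén–Lindelöf argument on this sector — comparing $w$ against $\pm\delta\,\mathrm{Im}(z^{\pi/(2\beta\pi)}) \cdot (\text{polynomial correction})$, or more cleanly against $\pm\delta\, r^{\kappa}\sin(\kappa\theta)$ with exponent $\kappa$ chosen larger than $2$ but compatible with the sector, exploiting that $2\beta\pi < \pi$ so the first Dirichlet eigenfunction exponent exceeds $1$ — forces $w \equiv 0$ since the subquadratic-times-log growth of $w$ is dominated by these comparison functions as $r\to\infty$ while both vanish on the boundary. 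Concretely: for each $\delta > 0$, $\pm w \le \delta\,\psi_\kappa$ on the boundary of $\Omega_\beta\cap\{r<R\}$ for $R$ large (the $\{r=R\}$ part by the growth bounds, the rest by the boundary conditions), hence on the whole region; let $R\to\infty$ then $\delta \to 0$. The only delicate point is matching the growth exponent of the comparison function to the sector angle so that it genuinely dominates $|x|^2\ln|x|$; since the constructed $\Psi$ is the one whose uniqueness we need and its growth is in fact only $O(|x|^2\ln|x|)$, picking $\kappa \in (2, \pi/(2\beta\pi))$... wait — one should double-check this is nonempty; it is precisely when $\beta < 1/4$, and for $\beta \in [1/4,1/2)$ one instead argues by iterating the reflection or by a slightly more careful barrier incorporating the logarithm, which is routine once the scheme is in place.
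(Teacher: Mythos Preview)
Your overall strategy is sound, but it is considerably more elaborate than the paper's, and one step contains a self-inflicted complication. For existence the paper does not truncate, split $f$, or build barriers: it simply sets $\Psi(z)=\int_{\Omega_\beta} G_\beta(z,w) f(w)\,dw$ and checks directly that this converges absolutely and in fact satisfies $|\nabla\Psi(z)|\le C\|f\|_{L^\infty}|z|$ (hence $|\Psi(z)|\le C|z|^2$, stronger than the stated bound). The crucial point is that for fixed $z$ the Green's kernel decays like $|w|^{-1/\beta}$, and $1/\beta>2$ makes this integrable over the whole sector --- no exhaustion argument is needed. Your truncation-plus-barrier scheme would also work (with $\chi(\theta)=\sin(\theta/\beta)$ the barrier computation goes through since $4-1/\beta^2<0$), but it obscures the simple reason why the problem is well-posed here, namely the extra decay of the sector Green's function compared to the free-space one.

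For uniqueness the paper again uses the explicit kernel: it writes a harmonic $w$ on $\Omega_\beta\cap\{r<R\}$ via the Poisson representation with $G_\beta^R$, observes that the resulting boundary kernel on $\{r=R\}$ decays like $R^{-1/\beta-1}$, and lets $R\to\infty$ using $|w|\lesssim R^2\ln R$ together with $1/\beta+1>3$. Your Phragm\'en--Lindel\"of alternative is perfectly fine, but the odd reflection you perform is unnecessary and is precisely what creates the artificial restriction $\beta<1/4$ that you then cannot remove. On the \emph{original} sector $\Omega_\beta$ of angle $\beta\pi$, the function $r^{1/\beta}\sin(\theta/\beta)$ is already harmonic, vanishes on both boundary rays, and has exponent $1/\beta>2$ for every $\beta<1/2$; comparing $\pm w$ against $\delta\, r^{1/\beta}\sin(\theta/\beta)$ on $\Omega_\beta\cap\{r<R\}$ and sending $R\to\infty$, then $\delta\to 0$, gives $w\equiv 0$ with no case distinction. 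Reflecting to a sector of angle $2\beta\pi$ halves the available exponent to $1/(2\beta)$ and manufactures the problem you noticed.
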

\begin{proof}
	For the existence, we simply define \begin{equation}\label{eq:Green_integral}
	\begin{split}
	\Psi(z) = \lim_{R \rightarrow +\infty} \int_{\Omega_\beta \cap \{ |w| < R \} } G_\beta(z,w) f(w) dw,
	\end{split}
	\end{equation} using complex notation. Using \begin{equation*}
	\begin{split}
	|G_\beta(z,w)| = \frac{1}{2\pi} \ln\left| 1 + \frac{z^{1/\beta} - \overline{z^{1/\beta}}}{\overline{z^{1/\beta}} - w^{1/\beta}} \right| \le \frac{1}{2\pi}  \frac{|z^{1/\beta} - \overline{z^{1/\beta}}|}{|\overline{z^{1/\beta}} - w^{1/\beta}|},
	\end{split}
	\end{equation*} one sees that the kernel decays as $|w|^{1/\beta}$ for each fixed $z$, and since $1/\beta >2 $, the limit is well-defined. We claim that $\Psi$ satisfies $|\Psi(z)| \le C|z|^2$. To see this, we differentiate the expression for $\Psi(z)$ to obtain \begin{equation*}
	\begin{split}
	\frac{|\nabla\Psi(z)|}{|z|} &\le \frac{1}{4\pi\beta}\int_{\Omega_\beta} |z|^{1/\beta - 2} \frac{|\overline{w^{1/\beta}} - w^{1/\beta}|}{|z^{1/\beta} - \overline{w^{1/\beta}}| |z^{1/\beta} - w^{1/\beta}|} |f(w)|dw \\
	&\le \frac{1}{4\pi\beta}\V f \V_{L^\infty} \int_{\Omega_\beta} \frac{|\overline{\zeta^{1/\beta}} - \zeta^{1/\beta}|}{|(\frac{z}{|z|})^{1/\beta} - \overline{\zeta^{1/\beta}}| | (\frac{z}{|z|})^{1/\beta} - \zeta^{1/\beta}| } d\zeta
	\end{split}
	\end{equation*} with a change of variables $w = |z|\zeta$. The last integral is bounded by a constant $C = C(\beta)$ uniformly in $z \in \Omega_\beta$, again because $1/\beta > 2$. Hence, $\Psi$ may grow at most quadratically as $|z| \rightarrow +\infty$. 
	
	Now we deal with the uniqueness statement. It suffices to show that when $f \equiv 0$, $\Psi \equiv 0$ is the only solution satisfying the assumptions on $\Psi$. Take some large $R > 0$, and as it is well-known, the values of $\Psi$ inside the ball $B_0(R)$ are determined by its trace on the boundary: \begin{equation*}
	\begin{split}
	\Psi(z) = \int_{\partial B_0(R)} \Re\left[ \pr_w G_\beta^R(z,w) \frac{\bar{w}}{R} \right] \Psi(w) d\sigma(w)
	\end{split}
	\end{equation*} where $d\sigma(w)$ represents the natural measure on the circle $\partial B_0(R)$. Explicit computations give that \begin{equation*}
	\begin{split}
	\pr_w G_\beta^R(z,w) &= - \frac{w^{1/\beta -1}(\overline{z^{1/\beta}} - z^{1/\beta})}{4\pi\beta} \\
	&\qquad \times  \left[ \frac{1}{(w^{1/\beta} - \overline{z^{1/\beta}})(w^{1/\beta} - z^{1/\beta})} - \frac{R^2}{(R^2 - w^{1/\beta}z^{1/\beta})(R^2 - w^{1/\beta}\overline{z^{1/\beta}})}  \right],
	\end{split}
	\end{equation*} and note that for each fixed $|z| < R$, the kernel satisfies the decay \begin{equation*}
	\begin{split}
	\left| \Re\left[ \pr_w G_\beta^R(z,w) \frac{\bar{w}}{R} \right]   \right| \le C \frac{1}{|w|^{1/\beta + 1}} = \frac{C}{R^{1/\beta + 1}}. 
	\end{split}
	\end{equation*} Then, \begin{equation*}
	\begin{split}
	|\Psi(z)| \le C R \frac{R^2 \ln( 1 + R)}{R^{1/\beta + 1}}  \rightarrow 0
	\end{split}
	\end{equation*} as $R \rightarrow +\infty$, since $1/\beta + 1 > 3$. 
\end{proof}

\begin{remark}
	In a recent work of Itoh, Miura, and Yoneda \cite{IMY2}, an expression of the Green's function of the form given in Lemma \ref{lem:Green} was used to prove the bound $|\nabla\Psi(x)|/|x| \lesssim \V f \V_{L^\infty}$, among other things. 
\end{remark}

\begin{remark}
	We note that when $f \equiv 1$, the function $$\Psi(x_1,x_2) := \frac{x_2^2 - \tan(\beta\pi)x_1x_2}{2}$$ vanishes on $\partial\Omega_\beta$ as well as $\Delta \Psi \equiv 1$. Therefore, by the above lemma, this $\Psi$ coincides with the integral representation given in \eqref{eq:Green_integral} with $f \equiv 1$. 
\end{remark}

\begin{lemma}[$\mathring{C}^\alpha$-estimate]\label{lem:C^circlealpha}
	Let $f \in \mathring{C}^\alpha(\overline{\Omega}_\beta)$, and $\Psi$ be the unique solution of $\Delta\Psi = f$ given in Lemma \ref{lem:Uniqueness_Poisson}. Then, we have the following bounds:   \begin{equation}\label{eq:log}
	\begin{split}
	\V \nabla^2\Psi\V_{L^\infty} \le C\V f\V_{L^\infty}\left(1 + \ln\left(1 + c \frac{\V f \V_{\mathring{C}^\alpha}}{\V f \V_{L^\infty}} \right) \right),
	\end{split}
	\end{equation} and 
	\begin{equation}\label{eq:C^circlealpha}
	\begin{split}
	\V\nabla^2\Psi\V_{\mathring{C}^\alpha(\overline{\Omega}_\beta)} \le C\V f \V_{\mathring{C}^\alpha(\overline{\Omega}_\beta)},
	\end{split}
	\end{equation}   where $C = C(\alpha,\beta) > 0$ is a constant depending only on $0 < \alpha <1$ and $0 < \beta < 1/2$. 
\end{lemma}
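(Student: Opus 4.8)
The plan is to work directly with the explicit kernel $K_\beta(z,w)=\pr_z G_\beta(z,w)$ of Lemma~\ref{lem:Green} and its $z$- and $\bar z$-derivatives, which represent the entries of the Hessian $\nabla^2\Psi$. The first step is the partial-fraction identity
\[
\frac{\overline{w^{1/\beta}} - w^{1/\beta}}{(z^{1/\beta} - \overline{w^{1/\beta}})(z^{1/\beta} - w^{1/\beta})}
= \frac{1}{z^{1/\beta} - \overline{w^{1/\beta}}} - \frac{1}{z^{1/\beta} - w^{1/\beta}},
\]
which splits $K_\beta$ into an \emph{interior} piece, singular on the diagonal $z=w$, and a \emph{reflected} piece, singular only when $z^{1/\beta}=\overline{w^{1/\beta}}$, i.e. when both points approach $\pr\Omega_\beta$. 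Differentiating once more, one writes each entry of $\nabla^2\Psi(z)$ as $\mathrm{p.v.}\int_{\Omega_\beta}\mathcal{K}(z,w)f(w)\,dw + c\,f(z)$, where $\mathcal{K}=\mathcal{K}^{\mathrm{int}}+\mathcal{K}^{\mathrm{bdry}}$ is a linear combination of $\pr_z K_\beta$, $\pr_{\bar z}\overline{K_\beta}$, etc.

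The analytic heart is a list of pointwise and first-difference bounds on $\mathcal{K}$. The basic geometric fact, valid \emph{precisely because} $0<\beta<1/2$ (so that the sector is convex and $z\mapsto z^{1/\beta}$ does not fold), is the lower bound $|z^{1/\beta}-w^{1/\beta}|\gtrsim_\beta \max(|z|,|w|)^{1/\beta-1}|z-w|$ for $z,w\in\overline{\Omega}_\beta$, and the analogous bound for $|z^{1/\beta}-\overline{w^{1/\beta}}|$ in terms of the distance from $z$ to the reflection of $w$ across the nearer boundary ray. Combining these with the explicit numerators yields: (a) $\mathcal{K}$ is homogeneous of degree $-2$ under $(z,w)\mapsto(\lambda z,\lambda w)$; (b) near the diagonal $|\mathcal{K}^{\mathrm{int}}(z,w)|\lesssim|z-w|^{-2}$ with the usual Calder\'on--Zygmund cancellation (after the conformal change $\zeta=z^{1/\beta}$ the interior kernel is essentially a Beurling--Ahlfors kernel), and $\mathcal{K}^{\mathrm{bdry}}$ satisfies the same bound around the reflected diagonal; (c) off the unit annulus one has the extra decay $|\mathcal{K}(z,w)|\lesssim|z|^{1/\beta-2}|w|^{-1/\beta}$ when $|z|\ll|w|$ and $|\mathcal{K}(z,w)|\lesssim|z|^{-1/\beta-2}|w|^{1/\beta}$ when $|w|\ll|z|$, together with the corresponding gradient-in-$z$ bounds. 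Here $1/\beta-2>0$ is the gain that will make a dyadic sum converge.

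Given these bounds, both inequalities follow by standard, if lengthy, arguments. For \eqref{eq:log}: by the scaling-invariance of the solution operator (under which $\|f\|_{L^\infty}$ and $\|f\|_{\mathring{C}^\alpha}$ are preserved) it suffices to bound $|\nabla^2\Psi(z_0)|$ for $|z_0|=1$; split the integral into $\{|w-z_0|<\delta\}$, $\{\delta\le|w-z_0|\le1\}$, and $\{|w-z_0|>1\}$. On the first region the $C^\alpha$-cancellation $|f(w)-f(z_0)|\lesssim\|f\|_{\mathring{C}^\alpha}|w-z_0|^\alpha$ (legitimate away from the origin) gives $O(\delta^\alpha\|f\|_{\mathring{C}^\alpha})$; on the middle annulus $|\mathcal{K}|\lesssim|w-z_0|^{-2}$ gives $O(\|f\|_{L^\infty}\ln(1/\delta))$; on the last region the off-diagonal decay gives $O(\|f\|_{L^\infty})$; optimizing $\delta\sim(\|f\|_{L^\infty}/\|f\|_{\mathring{C}^\alpha})^{1/\alpha}$ produces \eqref{eq:log}. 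For \eqref{eq:C^circlealpha}: since $\nabla^2\Psi$ is scale-invariant of degree $0$, one has $\|\nabla^2\Psi\|_{\mathring{C}^\alpha}\sim\|\nabla^2\Psi\|_{L^\infty}+\sup_k 2^{k\alpha}[\nabla^2\Psi]_{C^\alpha(\{|x|\sim 2^{-k}\})}$, and by scaling it is enough to bound the seminorm on $\{|x|\sim1\}$. Decompose $f=\sum_j f_j$ with $f_j$ supported in $\{|x|\sim 2^{-j}\}$: the self-interaction $j=0$ is handled by the classical interior and flat-boundary Schauder estimates on a fixed annulus (the two boundary rays are straight, so the corner never enters here), while for $j<0$ and $j>0$ the off-diagonal bounds in (c) give, after rescaling, contributions $\lesssim 2^{-|j|(1/\beta-2)}\|f\|_{\mathring{C}^\alpha}$ and $\lesssim 2^{-|j|(1/\beta+2)}\|f\|_{\mathring{C}^\alpha}$ respectively, which sum geometrically.

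The main obstacle is twofold. First, establishing the sharp lower bounds on $|z^{1/\beta}-w^{1/\beta}|$ and $|z^{1/\beta}-\overline{w^{1/\beta}}|$ and extracting from the explicit formulas the complete list of pointwise and difference bounds on $\mathcal{K}^{\mathrm{int}}$ and $\mathcal{K}^{\mathrm{bdry}}$ — this is exactly where $\beta<1/2$ is used, and where the folding/cancellation behavior near the two boundary rays must be handled with care. Second, the bookkeeping in the dyadic sum near the corner $0$: one must verify that the geometric factors $2^{-|j|(1/\beta-2)}$ genuinely carry a positive exponent (again forcing $1/\beta>2$) and that the weighted seminorm $\||\cdot|^\alpha\nabla^2\Psi\|_{C^\alpha_*}$ is correctly reassembled from the scale-$2^{-k}$ pieces. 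The logarithmic endpoint and the interior/flat-boundary Schauder inputs are classical and can be quoted.
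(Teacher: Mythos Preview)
Your proposal is correct and the logarithmic estimate \eqref{eq:log} is handled essentially as in the paper: a three-region split of the convolution around the evaluation point, with the near zone controlled by the weighted H\"older increment, the intermediate annulus giving the logarithm, and the far zone handled by the $|w|^{-1/\beta}$ decay of the kernel (this is exactly the paper's argument, only you first normalize $|z_0|=1$ by scale-invariance, which is cosmetic).

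For the seminorm estimate \eqref{eq:C^circlealpha}, however, your route genuinely differs from the paper's. The paper takes two points $z,z'$ with $|z'|\sim|z|$ and performs the classical Calder\'on--Zygmund four-term splitting of the weighted difference quotient (the terms $I$--$IV$), bounding each piece directly from the pointwise kernel bounds $|P_\beta|\lesssim|z-w|^{-2}$, $|\pr_z P_\beta|\lesssim|z-w|^{-3}+|z|^{1/\beta-3}|z-w|^{-2}(|z|^{1/\beta-2}+|w|^{1/\beta-2})^{-1}$, together with the fact that $\int_{\Omega_\beta}P_\beta(z,w)\,dw$ is a constant (read off from the explicit quadratic solution for $f\equiv 1$). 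You instead dyadically decompose $f=\sum_j f_j$ on annuli, invoke standard flat-boundary Schauder for the self-interaction on $\{|x|\sim1\}$, and sum the remote contributions geometrically using the off-diagonal decay $|\mathcal K(z,w)|\lesssim|z|^{1/\beta-2}|w|^{-1/\beta}$ (and its mirror). This is a legitimate and more modular argument: it cleanly separates the ``elliptic regularity at unit scale'' from the ``corner/infinity'' contributions, and makes transparent why $1/\beta>2$ is exactly the summability threshold. The paper's direct approach, by contrast, avoids quoting Schauder as a black box and keeps everything at the level of explicit kernel computations, which has the advantage of being self-contained and of making the uniform boundedness of $\int P_\beta\,dw$ (your term $IV$) completely explicit via the $f\equiv 1$ solution rather than via CZ cancellation of the Beurling--Ahlfors kernel after conformal change.

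One small correction: in your reassembly formula the weight should be $2^{-k\alpha}$, not $2^{k\alpha}$, since $|x|^\alpha\sim 2^{-k\alpha}$ on $\{|x|\sim 2^{-k}\}$; this is a typo and does not affect the argument.
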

\begin{proof}
	We begin with the $L^\infty$ bound. We write the integral as \begin{equation*}
	\begin{split}
	\left[ \int_{ \Omega_\beta \cap \{ |z-w| \le l|z| \} } + \int_{ \Omega_\beta \cap \{ l|z| \le |z-w| \le 2|z|   \} }  +\int_{ \Omega_\beta \cap \{ 2|z| \le |z-w|   \} }    \right]P_\beta(z,w)(f(w) - f(z)) dw ,
	\end{split}
	\end{equation*} for some $l \le 1/2$ to be chosen later, and in the first region, we note that \begin{equation*}
	\begin{split}
	\left|\int_{ \Omega_\beta \cap \{ |z-w| \le l|z| \} } P_\beta(z,w)(f(w) - f(z)) dw \right| \le C \V f\V_{\mathring{C}^\alpha} \int_{|z-w| \le l|z|} \frac{|z-w|^{\alpha-2}}{|z|^\alpha}  dw \le Cl^\alpha \V f\V_{\mathring{C}^\alpha}.
	\end{split}
	\end{equation*} In the second region, we use the upper bound \eqref{eq:P_upper} to obtain \begin{equation*}
	\begin{split}
	\left| \int_{ \Omega_\beta \cap \{ l|z| \le |z-w| \le 2|z|   \} } P_\beta(z,w)(f(w) - f(z)) dw \right| \le C \ln\left(\frac{2}{l}\right) \V f \V_{L^\infty}. 
	\end{split}
	\end{equation*} Lastly, in the third region, note that $|w| \ge c|z|$ and therefore $|P_\beta(z,w)| \le C |z|^{1/\beta-2}/|w|^{1/\beta}$. Then, we can simply bound the integral by $C\V f \V_{L^\infty}$. Choosing \begin{equation*}
	\begin{split}
	l:= \min\left( \frac{1}{2}, \frac{\V f \V_{L^\infty}}{\V f \V_{\mathring{C}^\alpha}} \right)
	\end{split}
	\end{equation*} gives the logarithmic estimate \eqref{eq:log}.
	
	Now we need to obtain a bound on \begin{equation*}
	\begin{split}
	\frac{1}{|z-z'|^\alpha}\left( |z|^\alpha\int_{\Omega_\beta} P_\beta(z,w)(f(w) - f(z))dw - |z'|^\alpha\int_{\Omega_\beta} P_\beta(z',w)(f(w) - f(z'))dw   \right),
	\end{split}
	\end{equation*} but observe that in the presence of the $L^\infty$-estimate, it suffices to consider pairs $z \ne z' \in \Omega_\beta$ satisfying  $(1-1/10)|z| \le |z'| \le (1+1/10)|z|$, and also replace the factor $|z'|^\alpha$ by $|z|^\alpha$. Then we write the difference as:
	\begin{equation*}
	\begin{split}
	\frac{|z|^\alpha}{|z-z'|^\alpha} &\left[ \int_{\Omega_\beta\cap \{ |z-w| \le 2|z-z'| \} } P_\beta(z,w)(f(w) - f(z)) dw \right. \\
	&\qquad+ \int_{\Omega_\beta\cap \{ |z-w| \le 2|z-z'| \} } P_\beta(z',w)(f(w) - f(z')) dw  \\
	&\qquad+ \int_{\Omega_\beta\cap \{ |z-w| \ge 2|z-z'| \} } \left(P_\beta(z',w) - P_\beta(z,w)\right)(f(w) - f(z)) dw \\
	&\qquad+ \left.(f(z)-f(z'))\int_{\Omega_\beta\cap \{ |z-w| \ge 2|z-z'| \} } P_{\beta}(z',w)dw \right] =: I + II + III + IV.  
	\end{split}
	\end{equation*} It is straightforward to see that the term $IV$ is bounded by $C \V f \V_{\mathring{C}^\alpha}$, recalling from the previous lemma that the integral \begin{equation*}
	\begin{split}
	\left|\int_{\Omega_\beta\cap \{ |z-w| \ge 2|z-z'| \} } P_{\beta}(z',w)dw \right|
	\end{split}
	\end{equation*} is uniformly bounded. The terms $I$ and $II$ are simply bounded by using \eqref{eq:P_upper} and that \begin{equation*}
	\begin{split}
	|f(w) - f(z)| \le C \V f \V_{\mathring{C}^\alpha} \frac{|w-z|^\alpha}{|z|^\alpha}.
	\end{split}
	\end{equation*} Finally, to treat the term $III$, we further divide the domain of integration into $\{ |z|/2 \ge |z-w| \ge 2|z-z'| \}$ and $\{ |z-w| \ge |z|/2 \}$. Using the mean value theorem as in the proof of the previous lemma, we have in the first region \begin{equation*}
	\begin{split}
	&\frac{|z|^\alpha}{|z-z'|^\alpha}\left| \int_{\Omega_\beta\cap \{ |z|/2 \ge |z-w| \ge 2|z-z'| \}} \left( P_\beta(z',w) - P_\beta(z,w) \right)(f(w) - f(z)) dw  \right| \\
	&\qquad \le C \frac{|z|^\alpha}{|z-z'|^\alpha}\V f \V_{\mathring{C}^\alpha}\\
	&\qquad\qquad \times|z-z'| \int_{|z|/2 \ge |z-w| > 2|z-z'| } \left(  \frac{|z|^{1/\beta -3}}{|z-w|^2 (|z|^{1/\beta-2} + |w|^{1/\beta-2})} + \frac{1}{|z-w|^{3}} \right)  \frac{|z-w|^\alpha}{|z|^\alpha} dw \\
	&\qquad \le C \V f \V_{\mathring{C}^\alpha} |z - z'|^{1-\alpha} \left( \frac{|z|^\alpha - |z-z'|^\alpha}{|z|} + (|z-z'|^{\alpha -1 } - |z|^{\alpha -1}) \right) \le C  \V f \V_{\mathring{C}^\alpha} .
	\end{split}
	\end{equation*} Then, in the case $|z-w| \ge |z|/2$, we have $|w| \ge |z|/2$ and then simply use the $L^\infty$-bound for $f$ to deduce \begin{equation*}
	\begin{split}
	&\frac{|z|^\alpha}{|z-z'|^\alpha}\left| \int_{\Omega_\beta\cap \{ |z|/2 \le |z-w| \}} \left( P_\beta(z',w) - P_\beta(z,w) \right)(f(w) - f(z)) dw  \right| \\
	&\qquad \le C\V f \V_{L^\infty}\frac{|z|^\alpha}{|z-z'|^\alpha} \cdot \frac{|z-z'|}{|z|}\le C \V f\V_{L^\infty}.
	\end{split}
	\end{equation*} This finishes the proof. 

\end{proof} 

We now consider functions $f$ on $\Omega_\beta$ which admits a continuous extension up to the boundary, and is uniformly $C^\alpha$ on $\overline{\Omega}_\beta$: that is, \begin{equation*}
\begin{split}
\sup_{x \in \overline{\Omega}_\beta} |f(x)| + \sup_{x\ne y \in \overline{\Omega}_\beta} \frac{|f(x)-f(y)|}{|x-y|^\alpha} = \V f \V_{L^\infty(\Omega_\beta)} +  \V f \V_{{C}_*^\alpha(\overline{\Omega}_\beta)} =: \V f \V_{C^\alpha(\overline{\Omega}_\beta)} < + \infty. 
\end{split}
\end{equation*}
This result, while technically involved, is necessary to complete the cut-off argument in Section \ref{sec:compact_blowup}.
\begin{lemma}[$C^\alpha$-estimate]\label{lem:C^alpha}
	Let $f \in C^\alpha(\overline{\Omega}_\beta)$, and $\Psi$ be the unique solution of $\Delta\Psi = f$ given in Lemma \ref{lem:Uniqueness_Poisson}. Then, we have \begin{equation}\label{eq:L^infty_C^alpha}
	\begin{split}
	|\nabla^2\Psi(x)| \le C \ln(2 + |x|) \V f \V_{L^\infty(\Omega_\beta)} +  C\V f \V_{C^\alpha(\overline{\Omega}_\beta)}
	\end{split}
	\end{equation} and if in addition we have $0 < \alpha < 1/\beta - 2$, the following uniform H\"older estimate is valid: \begin{equation}\label{eq:dotCalpha}
	\begin{split}
	\V \nabla^2 \Psi \V_{{C}_*^\alpha(\overline{\Omega}_\beta)} \le C \V f \V_{C^\alpha(\overline{\Omega}_\beta)},
	\end{split}
	\end{equation} where $C = C(\alpha,\beta) > 0$ is a constant depending only on $0 < \alpha <1$ and $0 < \beta < 1/2$. 
\end{lemma}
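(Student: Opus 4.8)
The plan is to mirror the structure of the proof of Lemma \ref{lem:C^circlealpha}, tracking how the kernel $P_\beta$ interacts with a function that is merely $C^\alpha$ (rather than $\mathring{C}^\alpha$) up to the boundary. The fundamental difference is that for $f \in C^\alpha(\overline{\Omega}_\beta)$ we only control $|f(w)-f(z)| \le \V f\V_{C^\alpha}|w-z|^\alpha$, without the extra decay factor $|z|^{-\alpha}$ which was available before; in particular the ``far field'' behaves worse, and this is exactly why a logarithmic loss appears in \eqref{eq:L^infty_C^alpha} and why the restriction $0 < \alpha < 1/\beta - 2$ is needed for the genuine H\"older bound \eqref{eq:dotCalpha}. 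First I would establish \eqref{eq:L^infty_C^alpha}: fix $x = z$, write $\nabla^2\Psi(z) = \int_{\Omega_\beta} P_\beta(z,w)(f(w)-f(z))\,dw$ (the subtraction of $f(z)$ being justified since $\int P_\beta(z,w)\,dw$ is bounded, by the remark after Lemma \ref{lem:Uniqueness_Poisson}), and split the domain into $\{|z-w|\le |z|/2\}$ and $\{|z-w| > |z|/2\}$. On the inner region the Calder\'on--Zygmund bound $|P_\beta(z,w)| \le C|z-w|^{-2}$ together with $|f(w)-f(z)|\le \V f\V_{C^\alpha}|z-w|^\alpha$ gives a contribution bounded by $C\V f\V_{C^\alpha}$ (the integral $\int_{|z-w|\le|z|/2}|z-w|^{\alpha-2}\,dw$ converges and is $\lesssim |z|^\alpha$, but I can instead cut at radius $1$ and absorb the rest into the far piece, or just keep $|z|^\alpha$ and observe it is dominated by the logarithm times an $L^\infty$ term after rescaling — I would actually split at $\min(1,|z|/2)$ to keep things clean). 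On the outer region I use $|P_\beta(z,w)| \le C|z|^{1/\beta-2}|w|^{-1/\beta}$ for $|w|$ large, which is integrable since $1/\beta > 2$, contributing $C\V f\V_{L^\infty}$; the intermediate annulus $\{1 \le |z-w| \le |z|/2\}$ (present only when $|z|$ is large) produces the factor $\ln(2+|z|)$ against $\V f\V_{L^\infty}$ via the logarithmic integral of $|P_\beta|$.

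Next, for the H\"older estimate \eqref{eq:dotCalpha}, I would follow the four-term decomposition $I + II + III + IV$ exactly as in Lemma \ref{lem:C^circlealpha}, with $z, z'$ close (say $|z-z'| \le |z|/10$) since \eqref{eq:L^infty_C^alpha} handles the complementary range of pairs after one notices that the logarithm is harmless against $|z-z'|^\alpha$ once $|z-z'| \gtrsim |z|$. Terms $I$ and $II$ (the local pieces near $z$ and $z'$) are handled by $|P_\beta| \le C|z-w|^{-2}$ and $|f(w)-f(z)| \le C\V f\V_{C^\alpha}|z-w|^\alpha$, giving $\int_{|z-w|\le 2|z-z'|}|z-w|^{\alpha-2}\,dw \lesssim |z-z'|^\alpha$, which cancels the prefactor. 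Term $IV$, namely $(f(z)-f(z'))\int_{|z-w|\ge 2|z-z'|}P_\beta(z',w)\,dw$, is bounded by $C\V f\V_{C^\alpha}$ directly from the uniform bound on $\int P_\beta$. The crux is term $III = \int_{|z-w|\ge 2|z-z'|}(P_\beta(z',w)-P_\beta(z,w))(f(w)-f(z))\,dw$: I split it at $|z-w| = |z|/2$. On $\{2|z-z'| \le |z-w| \le |z|/2\}$ the mean value theorem gives $|P_\beta(z',w)-P_\beta(z,w)| \le C|z-z'|(|z-w|^{-3} + |z|^{1/\beta-3}|z-w|^{-2}(|z|^{1/\beta-2}+|w|^{1/\beta-2})^{-1})$, and pairing this with $|f(w)-f(z)| \le C\V f\V_{C^\alpha}|z-w|^\alpha$ yields $\int_{2|z-z'|}^{|z|/2}|z-z'|\,r^{\alpha-3}\cdot r\,dr \sim |z-z'|\cdot|z-z'|^{\alpha-1} = |z-z'|^\alpha$ from the $|z-w|^{-3}$ piece; the second piece of the kernel difference contributes $\int r^{\alpha-2}\,dr \lesssim |z|^{\alpha-1}$ against an extra $|z-z'|$, which is again $\lesssim |z-z'|^\alpha$ since $|z-z'| \le |z|$. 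The remaining region $\{|z-w| \ge |z|/2\}$ is where $0 < \alpha < 1/\beta-2$ enters: here $|f(w)-f(z)| \le C\V f\V_{C^\alpha}(|z|^\alpha + |w|^\alpha)$ and the kernel difference decays like $|z-z'|\,|z|^{1/\beta-2}|w|^{-1/\beta-1}$ (differentiating the far-field bound in $z$), so the integral against $|w|^\alpha$ converges precisely when $\alpha < 1/\beta - 2$, producing $C\V f\V_{C^\alpha}|z-z'|/|z| \le C\V f\V_{C^\alpha}|z-z'|^\alpha$.

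The main obstacle, as the above makes clear, is the far-field analysis in term $III$: unlike in the $\mathring{C}^\alpha$ setting, a $C^\alpha$ function can genuinely grow like $|w|^\alpha$ at infinity, so the only thing saving the integral is the superquadratic decay $|w|^{-1/\beta-1}$ of $\partial_z K_\beta$ coming from the conformal map $z\mapsto z^{1/\beta}$ — and this forces the quantitative threshold $\alpha < 1/\beta - 2$. The other delicate bookkeeping point is making sure that when $|z-z'|$ is comparable to or larger than $|z|$ one reduces cleanly to the $L^\infty$--$C^\alpha$ bound \eqref{eq:L^infty_C^alpha} without the logarithm spoiling the H\"older seminorm; this works because $\ln(2+|x|)$ is absorbed once we also have the genuine $|z-z'|^\alpha$ gain available in that regime. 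Everything else is a routine, if lengthy, repetition of the Calder\'on--Zygmund-type splittings already carried out in Lemma \ref{lem:C^circlealpha}.
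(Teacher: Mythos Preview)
Your outline follows the paper's approach closely: the same subtraction of $f(z)$, the same three-region split for the $L^\infty$ bound, and the same $I+II+III+IV$ decomposition for the H\"older seminorm, with the restriction $\alpha < 1/\beta - 2$ entering through the far-field integrability in term $III$. Two points deserve correction.

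First, a minor one: the reduction to $|z-z'| \le |z|/10$ via \eqref{eq:L^infty_C^alpha} is shaky. For pairs with $|z-z'| > |z|/10$ but both $|z|,|z'|$ small, the quotient $(\ln(2+|z|)\V f\V_{L^\infty} + \V f\V_{C^\alpha})/|z-z'|^\alpha$ is not obviously bounded, since $|z-z'|$ can still be arbitrarily small. The paper does \emph{not} make this reduction in the $C^\alpha$ case (only in the $\mathring{C}^\alpha$ case, where the extra $|z|^\alpha$ weight in the seminorm saves you); it simply runs the $I+II+III+IV$ decomposition for all pairs $z\ne z'$. You should do the same.

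Second, and more substantially, you underestimate term $IV$. You write that it is ``bounded by $C\V f\V_{C^\alpha}$ directly from the uniform bound on $\int P_\beta$'', but what is needed is a uniform bound on the \emph{truncated} integral
\[
\int_{\Omega_\beta \cap \{|z-w|\ge 2|z-z'|\}} P_\beta(z',w)\,dw,
\]
equivalently on $\int_{\Omega_\beta \cap \{|z-w|\le 2|z-z'|\}} P_\beta(z',w)\,dw$, uniformly over all $z,z'$. Knowing that the full principal-value integral equals a constant does not give this for free, because the ball contains the singularity at $w=z'$ and there is no absolute integrability to separate the pieces. In the paper this is the most technical part of the proof: one rescales to $|\eta|=1$, reduces to small radius, compares $P_\beta$ with the half-plane kernel $P_1$ (the difference being uniformly bounded near $\eta$), and then computes $\int_{\mathbb{H}\cap B_\eta(r)} P_1(\eta,\zeta)\,d\zeta$ explicitly in polar coordinates, in the spirit of the Bertozzi--Constantin vortex-patch estimate. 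This step is genuinely missing from your proposal and should be supplied.
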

\begin{remark}
We note that the logarithmic factor in \eqref{eq:L^infty_C^alpha} is only due to the fact that a general smooth function which is bounded in $C^\alpha$ function need not have decaying derivatives. Indeed, this logarithmic factor is not present if the functions are also bounded in $\mathring{C}^\alpha$. 
\end{remark}


\begin{proof}
	We note that to estimate $\nabla^2\Psi$ in $C^\alpha$, it suffices to obtain a $C^\alpha$-bound on the real and imaginary parts of the following integral: \begin{equation*}
	\begin{split}
	\int_{\Omega_\beta} \pr_{z} K_\beta(z,w) f(w) dw,
	\end{split}
	\end{equation*} defined as a principal value, both near $w = +\infty$ and $w = z$. One may compute that \begin{equation}\label{eq:P_beta}
	\begin{split}
	P_\beta(z,w) := \pr_z K_\beta(z,w) &= - \frac{\overline{w^{1/\beta}} - w^{1/\beta}}{4\pi\beta^2} \frac{z^{1/\beta - 2}}{(z^{1/\beta} - \overline{w^{1/\beta}})^2(z^{1/\beta} - w^{1/\beta})^2} \\
	&\qquad \times \left[ (1-\beta)(z^{1/\beta} - \overline{w^{1/\beta}})(z^{1/\beta} - w^{1/\beta}) - z^{1/\beta}(2z^{1/\beta} - w^{1/\beta}- \overline{w^{1/\beta}} ) \right].
	\end{split}
	\end{equation} From the remark following Lemma \ref{lem:Uniqueness_Poisson}, we know that the principal value integral \begin{equation*}
	\begin{split}
	\int_{\Omega_\beta} P_\beta(z,w) dw 
	\end{split}
	\end{equation*} is well-defined indeed equals a (complex) constant. Therefore, using the fact that $f \in C^\alpha(\overline{\Omega}_\beta)$, it suffices to bound \begin{equation*}
	\begin{split}
	\int_{\Omega_\beta} P_\beta(z,w) (f(w) - f(z))  dw 
	\end{split}
	\end{equation*}  as well as \begin{equation}\label{eq:Hoelder_diff}
	\begin{split}
	\frac{1}{|z-z'|^\alpha} \left[  \int_{\Omega_\beta} P_\beta(z,w)( f(w) - f(z)) dw - \int_{\Omega_\beta} P_\beta(z',w) (f(w) - f(z')) dw    \right]
	\end{split}
	\end{equation} for $z \ne z' \in \Omega_\beta$. We begin with the $L^\infty$ bound: we first note that, for $w, z \in \Omega_\beta$,  \begin{equation}\label{eq:P_upper}
	\begin{split}
	|P_\beta(z,w)| \le \frac{C}{|z-w|^2}.
	\end{split}
	\end{equation} for some $ C = C(\beta)$. We take $a = \min\{1,|z|\}$ and consider the regions $\{ |z - w| \le a \}$, $\{ a < |z-w| \le 2|z| \}$, and $\{ 2|z| < |z-w| \}$. In the first region, we bound the integral in absolute value as \begin{equation*}
	\begin{split}
	C\V f \V_{C^\alpha} \int_{|z-w| \le a} |z-w|^{\alpha-2} dw \le C \V f \V_{C^\alpha}. 
	\end{split}
	\end{equation*} Then, in the second region, we note that $|P_\beta(z,w)| \le |z-w|^{-2}$ and therefore we obtain a bound \begin{equation*}
	\begin{split}
	C \V f \V_{L^\infty} \int_{ a < |z-w| \le 2|z| } \frac{1}{|z-w|^2} dw \le C \ln(2 + |z|) \V f \V_{L^\infty}. 
	\end{split}
	\end{equation*} Lastly, when $\{ 2|z| < |z-w| \}$, we make a change of variables $w = |z|\zeta$ to rewrite the integral as \begin{equation*}
	\begin{split}
	\int_{\Omega_\beta \cap \{ 2 < |\frac{z}{|z|} - \zeta | \} } P_\beta(z,|z|\zeta) |z|^2d\zeta = \int_{\Omega_\beta \cap \{ 2 < |\frac{z}{|z|} - \zeta | \} } P_\beta(\frac{z}{|z|},\zeta)d\zeta  ,
	\end{split}
	\end{equation*} which is easily seen to be bounded uniformly in $z \in \Omega_\beta$, since $P_\beta(\frac{z}{|z|},\zeta)$ decays as $\zeta^{1/\beta}$ and $1/\beta > 2$. This finishes the proof of \eqref{eq:L^infty_C^alpha}. Turning to the task of obtaining the ${C}_*^\alpha$ bound, we take two points $z \ne z'$ and rewrite \eqref{eq:Hoelder_diff} as \begin{equation*}
	\begin{split}
	\frac{1}{|z-z'|^\alpha} &\left[ \int_{\Omega_\beta\cap \{ |z-w| \le 2|z-z'| \} } P_\beta(z,w)(f(w) - f(z)) dw \right. \\
	&\qquad+ \int_{\Omega_\beta\cap \{ |z-w| \le 2|z-z'| \} } P_\beta(z',w)(f(w) - f(z')) dw  \\
	&\qquad+ \int_{\Omega_\beta\cap \{ |z-w| \ge 2|z-z'| \} } \left(P_\beta(z',w) - P_\beta(z,w)\right)(f(w) - f(z)) dw \\
	&\qquad+ \left.(f(z)-f(z'))\int_{\Omega_\beta\cap \{ |z-w| \ge 2|z-z'| \} } P_{\beta}(z',w)dw \right] =: I + II + III + IV.  
	\end{split}
	\end{equation*} The integral in $I$ can be easily bounded using the upper bound \eqref{eq:P_upper}: \begin{equation*}
	\begin{split}
	\int_{\Omega_\beta\cap \{ |z-w| \le 2|z-z'| \}} |P_\beta(z,w)||f(w)-f(z)|dw \le C \V f \V_{C^\alpha} |z - z'|^\alpha,
	\end{split}
	\end{equation*} and similarly we can bound $II$ as well. We then treat the term $III$ using the mean value theorem for $\pr_z P_\beta$: note that the derivative is bounded in absolute value by \begin{equation*}
	\begin{split}
	\left| \pr_z P_\beta(z,w) \right| \le C_\beta \left(  \frac{|z|^{1/\beta -3}}{|z-w|^2 (|z|^{1/\beta-2} + |w|^{1/\beta-2})} + \frac{1}{|z-w|^3} \right),
	\end{split}
	\end{equation*} (the first term arises when $\pr_z$ falls on the factor $z^{1/\beta-2}$ in \eqref{eq:P_beta}) and hence we bound \begin{equation*}
	\begin{split}
	&\left| \int_{\Omega_\beta\cap \{ |z-w| \ge 2|z-z'| \}} \left( P_\beta(z',w) - P_\beta(z,w) \right)(f(w) - f(z)) dw  \right| \\
	&\qquad \le C \V f \V_{C^\alpha}|z - z'| \int_{|z-w| > 2|z-z'| } \left(  \frac{|z|^{1/\beta -3}}{|z-w|^2 (|z|^{1/\beta-2} + |w|^{1/\beta-2})} + \frac{1}{|z-w|^{3}} \right) |z-w|^\alpha dw \\
	&\qquad \le C \V f \V_{C^\alpha} |z - z'|^\alpha,
	\end{split}
	\end{equation*} where we have used that $\alpha < 1/\beta -2$, which ensures integrability of the kernel as $|w| \rightarrow +\infty$. Finally, it suffices to obtain a uniform bound \begin{equation*}
	\begin{split}
	\left| \int_{ \Omega_\beta \cap \{ |z-w| \ge 2|z-z'| \} } P_\beta(z,w) dw \right| \le C
	\end{split}
	\end{equation*} for all $0 < |z'| \le |z|$. Proving boundedness for this quantity is similar to a calculation done in the work of Bertozzi and Constantin on vortex patches \cite{BeCo}. It is equivalent to instead having a uniform bound for the integral of $P_\beta(z,w)$ on the region $\Omega_\beta \cap \{ |z-w| \le 2|z-z'| \}$, and after a rescaling of variables $w = |z|\zeta$, the integral equals \begin{equation*}
	\begin{split}
	\int_{\Omega_\beta \cap \{ |\frac{z}{|z|}-\zeta| \le 2|\frac{z}{|z|}- \frac{z'}{|z|}| \}}  P_\beta(\frac{z}{|z|},\zeta) d\zeta.  
	\end{split}
	\end{equation*} In other words, for all $\eta \in \Omega_\beta$ with $|\eta | = 1$, we need to bound \begin{equation*}
	\begin{split}
	\int_{ \Omega_\beta \cap B_\eta(r) } P_\beta(\eta,\zeta)d\zeta
	\end{split}
	\end{equation*} uniformly in $0 < r \le 2$, where $B_\eta(r)$ is simply the ball of radius $r$ centered at $\eta$. We make a number of reductions. First, it suffices to treat the case $r \le 1/10$ (say), since the integral of $P_\beta$ in the region $\Omega_\beta \cap (B_\eta(r)\backslash B_\eta(1/10) )$ for $r > 1/10$ can be bounded in absolute value using the simple bound \eqref{eq:P_upper}. Next, we may assume that $\eta = e^{i\mu}$ with $\mu \le \beta\pi/2$. But observe that we have a uniform bound \begin{equation*}
	\begin{split}
	|P_\beta(\eta,\zeta) - a_\beta P_1(\eta,\zeta)| \le C_\beta
	\end{split}
	\end{equation*} as long as $\eta = e^{i\mu}$ with $0 < \mu  \le \beta\pi/2$ and $|\zeta - \eta| \le 1/10$, for some constants $a_\beta$ and $C_\beta > 0$, where \begin{equation*}
	\begin{split}
	P_1(z,w) := \pr_{zz}G_1(z,w) := \pr_{zz}\left( \frac{1}{2\pi} \ln \frac{|z-w|}{|\bar{z} - w|} \right) = \frac{\overline{w} - w}{4\pi} \frac{2z - w - \overline{w}}{(z-\overline{w})^2(z-w)^2}. 
	\end{split}
	\end{equation*} Here, note that $G_1(z,w)$ is simply the Dirichlet Green's function for the upper half-plane $\mathbb{H} := \{ (r,\theta) : 0 < \theta < \pi \}$. Therefore, it suffices to bound \begin{equation*}
	\begin{split}
	\int_{ \Omega_\beta \cap B_\eta(r) } P_1(\eta,\zeta)d\zeta = \int_{\mathbb{H} \cap B_\eta(r)  } P_1(\eta,\zeta)d\zeta = \int_{\mathbb{R}^2}  P(\eta,\zeta) \tilde{\mathbf{1}}_{\mathbb{H} \cap  B_\eta(r) }(\zeta) d\zeta,
	\end{split}
	\end{equation*} where $P(z,w) := \pr_{zz} G(z,w)$ with $G(z,w) := \ln|z-w|/2\pi$, and $\tilde{\mathbf{1}}_{\mathbb{H} \cap  B_\eta(r) }$ is defined to be the odd (with respect to the $x_2 = 0$ axis) extension of the indicator function on $\mathbb{H} \cap  B_\eta(r)$.  For this final computation, we return to the real notation: writing $\eta = v_1 + iv_2$ and $\zeta = u_1 + iu_2$, we need to obtain a bound on \begin{equation*}
	\begin{split}
	\int_{\mathbb{R}^2} \frac{(v_1-u_1)(v_2-u_2)}{|v-u|^4} \tilde{\mathbf{1}}_{\mathbb{H} \cap  B_\eta(r) } du,
	\end{split}
	\end{equation*} and \begin{equation*}
	\begin{split}
	\int_{\mathbb{R}^2} \frac{(v_1-u_1)^2 - (v_2-u_2)^2}{|v-u|^4} \tilde{\mathbf{1}}_{\mathbb{H} \cap  B_\eta(r) } du,
	\end{split}
	\end{equation*} where $l \le 1/10$. We only consider the latter, since the other one can be treated in a parallel manner. We first write it as \begin{equation*}
	\begin{split}
	\int_{\mathbb{H} \cap  B_\eta(r)} \frac{(v_1-u_1)^2 - (v_2-u_2)^2}{|v-u|^4}  du- \int_{\left[\mathbb{H} \cap  B_\eta(r)\right]^-} \frac{(v_1-u_1)^2 - (v_2-u_2)^2}{|v-u|^4}  du
	\end{split}
	\end{equation*} where $\left[\mathbb{H} \cap  B_\eta(r)\right]^-$ denote the set obtained by reflecting $\mathbb{H} \cap  B_\eta(r)$ across the $x_2 = 0$ axis. To treat the first term, we use polar coordinates centered at $(v_1, v_2)$ to rewrite it as  \begin{equation*}
	\begin{split}
	\int_{v_2}^l \int_{-\pi}^{\pi} \frac{\cos(2\theta)}{r}  \mathbf{1}_{[-\theta_r,\pi + \theta_r]}   d\theta dr,
	\end{split}
	\end{equation*} where $0 < \theta_r < \pi/2 $ is  defined by $\sin(\theta_r) = v_2/r$ (There was nothing to show when $l \le v_2$). Evaluating the integral in $\theta$, \begin{equation*}
	\begin{split}
	c \int_{v_2}^l \frac{\sin(\theta_r)\cos(\theta_r)}{r} dr  \le C\int_{v_2}^{l} \frac{v_2}{r^2} dr \le C. 
	\end{split}
	\end{equation*} The other term can be shown to be uniformly bounded in a similar way: perform another explicit computation using polar coordinates in the region $\left[\mathbb{H} \cap  B_\eta(r)\right]^- \cap B_\eta(r)$, and then use the bound \eqref{eq:P_upper} in the remaining region.  This finishes the proof of the ${C}_*^\alpha$-bound \eqref{eq:dotCalpha}.
\end{proof}
	
Combining Lemmas \ref{lem:C^alpha} and \ref{lem:C^circlealpha}, we conclude that \begin{corollary}\label{cor:C^alphaintC^circlealpha}
	Let $f \in \mathring{C}^\alpha \cap C^\alpha(\overline{\Omega}_\beta)$, and $\Psi$ be the unique solution of $\Delta \Psi = f$ on $\overline{\Omega}_\beta$ with Dirichlet boundary conditions. Assume further that $0 < \alpha < \min\{ 1/\beta -2 ,1 \}$. Then, we have \begin{equation}\label{eq:C^alphaintC^circlealpha}
	\begin{split}
	\V \nabla^2\Psi\V_{ \mathring{C}^\alpha \cap C^\alpha(\overline{\Omega}_\beta) } \le C(\alpha,\beta) \V f \V_{\mathring{C}^\alpha \cap C^\alpha(\overline{\Omega}_\beta)}.
	\end{split}
	\end{equation}
\end{corollary}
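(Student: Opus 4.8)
The plan is to assemble the corollary directly from Lemmas \ref{lem:C^circlealpha} and \ref{lem:C^alpha}, since those two results already contain all the analytic content. Recall that the norm on $\mathring{C}^\alpha \cap C^\alpha(\overline{\Omega}_\beta)$ is comparable to $\V g\V_{L^\infty(\overline{\Omega}_\beta)} + \V |\cdot|^\alpha g\V_{C_*^\alpha(\overline{\Omega}_\beta)} + \V g\V_{C_*^\alpha(\overline{\Omega}_\beta)}$, the union of the scale-invariant Hölder seminorm and the uniform Hölder seminorm together with a sup-norm term. So it suffices to bound each of the three pieces $\V\nabla^2\Psi\V_{L^\infty}$, $\V |\cdot|^\alpha\nabla^2\Psi\V_{C_*^\alpha}$, and $\V\nabla^2\Psi\V_{C_*^\alpha}$ by a constant multiple of $\V f\V_{\mathring{C}^\alpha \cap C^\alpha}$.

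First I would handle the scale-invariant Hölder seminorm: estimate \eqref{eq:C^circlealpha} of Lemma \ref{lem:C^circlealpha} gives immediately $\V |\cdot|^\alpha\nabla^2\Psi\V_{C_*^\alpha(\overline{\Omega}_\beta)} \le \V\nabla^2\Psi\V_{\mathring{C}^\alpha(\overline{\Omega}_\beta)} \le C\V f\V_{\mathring{C}^\alpha(\overline{\Omega}_\beta)}$. Second, the uniform Hölder seminorm: the hypothesis $0 < \alpha < \min\{1/\beta - 2, 1\}$ is exactly what is needed for estimate \eqref{eq:dotCalpha} of Lemma \ref{lem:C^alpha} to apply, yielding $\V\nabla^2\Psi\V_{C_*^\alpha(\overline{\Omega}_\beta)} \le C\V f\V_{C^\alpha(\overline{\Omega}_\beta)}$. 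Third, for the genuine $L^\infty$ bound I would \emph{not} use \eqref{eq:L^infty_C^alpha}, which only gives control growing like $\ln(2+|x|)$ and hence does not by itself place $\nabla^2\Psi$ in $L^\infty(\overline{\Omega}_\beta)$; instead I would invoke the logarithmic estimate \eqref{eq:log} of Lemma \ref{lem:C^circlealpha}. Writing $a := \V f\V_{L^\infty}$ and $b := \V f\V_{\mathring{C}^\alpha} \ge a$, this reads $\V\nabla^2\Psi\V_{L^\infty} \le Ca\bigl(1 + \ln(1 + cb/a)\bigr) = C\bigl(a + b\cdot s^{-1}\ln(1+cs)\bigr)$ with $s := b/a \ge 1$, and since $s \mapsto s^{-1}\ln(1+cs)$ is bounded on $[1,\infty)$ (it is nonincreasing with value $\ln(1+c)$ at $s = 1$), we obtain $\V\nabla^2\Psi\V_{L^\infty} \le C'\V f\V_{\mathring{C}^\alpha}$.

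Summing the three bounds finishes the proof, with a constant depending only on $\alpha$ and $\beta$ (through the constants of Lemmas \ref{lem:C^circlealpha} and \ref{lem:C^alpha} and the universal bound on $s^{-1}\ln(1+cs)$). There is no real obstacle here; the only point worth flagging is the one just indicated — that the uniform boundedness of $\nabla^2\Psi$ must be routed through the $\mathring{C}^\alpha$-side estimate \eqref{eq:log} rather than the $C^\alpha$-side estimate \eqref{eq:L^infty_C^alpha}, which degrades at spatial infinity.
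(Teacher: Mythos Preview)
Your proposal is correct and follows the same route as the paper, which simply records the corollary as an immediate consequence of Lemmas \ref{lem:C^circlealpha} and \ref{lem:C^alpha}. One small simplification: your third step is unnecessary, since estimate \eqref{eq:C^circlealpha} bounds the \emph{full} $\mathring{C}^\alpha$-norm of $\nabla^2\Psi$, which by definition already contains $\V\nabla^2\Psi\V_{L^\infty}$; so the $L^\infty$ control is obtained directly in your first step without recourse to the logarithmic estimate \eqref{eq:log}.
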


\begin{remark}
	In the critical case $1/\beta - 2 = \alpha$, one has the following explicit counter-example: take $\Psi(r,\theta) = r^{1/\beta}\ln(r)\sin(\theta/\beta)\chi(r)$ where $\chi(\cdot) \in C_c^\infty([0,\infty))$ is a smooth cut-off function with $\chi(r) \equiv 1$  for $r \le 1$. Note that $\Psi$ vanishes on the boundary of $\partial\Omega_\beta$ and also note that near the corner $r = 0$, $\Delta\Psi(r,\theta) = c_\beta r^\alpha \sin(\theta/\beta) $ for some non-zero constant $c_\beta$. Clearly $\Delta\Psi \in C^\alpha(\overline{\Omega}_\beta)$ but $\partial_{x_1x_2}\Psi \notin C^\alpha(\overline{\Omega}_\beta)$. 
\end{remark}

\begin{remark}
	In the case of compactly supported function $f \in C^\alpha_c(\overline{\Omega}_\beta)$, a direct application of the Grisvard theorem \cite[Theorem 6.4.2.6]{Gris} gives that $\nabla^2\Psi \in C^\alpha(\overline{\Omega}_\beta)$, in the range $0 < \alpha < \min\{ 1/\beta -2 ,1 \}$, and there is a simple way to pass from this $C^\alpha$-estimate to a $\mathring{C}^\alpha$-estimate. To proceed, one still needs to argue that there is a unique solution to $\Delta \Psi = f$ with possibly non-decaying $f$ and that the H\"older bound is uniform in the size of the support of $f$. Therefore we have chosen to essentially re-prove this $C^\alpha$-estimate of Grisvard. 
\end{remark}

\begin{remark}
	For each fixed $k \ge 1,1 > \alpha> 0$, one has $C^{k,\alpha}$-estimates as well, assuming that the angle is sufficiently small: proceeding similarly as above, one obtains \begin{equation*}
	\begin{split}
	\V \nabla^2\Psi\V_{C^{k,\alpha}(\overline{\Omega}_\beta)} \le C(k,\alpha,\beta)\V f \V_{C^{k,\alpha}_c(\overline{\Omega}_\beta)},
	\end{split}
	\end{equation*} when $k + \alpha < 1/\beta - 2$. On the other hand, in the scale of $\mathring{C}^{k,\alpha}$-spaces, we always have \begin{equation*}
	\begin{split}
	\V \nabla^2\Psi\V_{\mathring{C}^{k,\alpha}(\overline{\Omega}_\beta)} \le C(k,\alpha,\beta)\V f \V_{\mathring{C}^{k,\alpha}(\overline{\Omega}_\beta)}
	\end{split}
	\end{equation*} without the additional assumption that $k + \alpha < 1/\beta - 2$. In particular, when $f \in \mathring{C}^\infty(\overline{\Omega}_\beta)$, we have $\nabla^2 \Psi \in \mathring{C}^\infty(\overline{\Omega}_\beta)$. We omit the details.
\end{remark}

\subsection{Local well-posedness for $2D$ Boussinesq}
In this subsection, we give a proof of local-in-time existence and uniqueness for the $2D$ Boussinesq system \eqref{B1} -- \eqref{B2} in scale-invariant H\"{o}lder spaces. We do require that $\nabla^\perp\rho_0$ and $\omega_0$ are bounded in space, but it is not required that they decay at infinity, and there are no boundary conditions for $\rho$ and $\omega$. In particular, $\rho$ and $u$ may grow linearly at infinity. 
\begin{theorem}[Local well-posedness for $2D$ Boussinesq]\label{thm:LWP2D}
Let $\omega_0$ and $\rho_0$ be functions on $\Omega$ satisfying $\omega_0, \nabla^\perp\rho_0 \in \mathring{C}^\alpha(\overline{\Omega})$. Assume that $\omega_0$ and $\partial_{x_2}\rho_0$ are odd and $\partial_{x_1}\rho_0$ is even with respect to the $x_1$-axis. Then, there exists $ T = T(\V \omega_0\V_{\mathring{C}^\alpha}, \V \nabla^\perp\rho_0 \V_{\mathring{C}^\alpha} ) > 0$ such that there is a unique solution $(\omega, \rho)$ in the class $\omega,\nabla^\perp\rho \in C([0,T);\mathring{C}^\alpha(\overline{\Omega}))$. Moreover, the solution can be continued past $T$ if and only if \begin{equation*}
\begin{split}
\int_0^T \V \nabla u_t \V_{L^\infty(\overline{\Omega})} dt < + \infty
\end{split}
\end{equation*} holds. 
\end{theorem}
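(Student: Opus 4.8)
The plan is to run a standard iteration scheme in the Lagrangian (vorticity–density) formulation, using the elliptic estimates of Corollary \ref{cor:C^alphaintC^circlealpha} (and Lemma \ref{lem:C^circlealpha}) to close an a priori bound on $\V\omega\V_{\mathring{C}^\alpha} + \V\nabla^\perp\rho\V_{\mathring{C}^\alpha}$ on a short time interval. Write the system in vorticity form as $\partial_t\omega + u\cdot\nabla\omega = \partial_{x_2}\rho$, $\partial_t\rho + u\cdot\nabla\rho = 0$, with $u = \nabla^\perp\Psi$, $\Delta\Psi = \omega$, and Dirichlet boundary condition $\Psi = 0$ on $\partial\Omega$. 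First I would check that the prescribed symmetries (odd $\omega_0$, odd $\partial_{x_2}\rho_0$, even $\partial_{x_1}\rho_0$) are preserved by the flow: the Biot–Savart operator on the sector $\Omega$ maps odd $\omega$ to a velocity field whose normal component vanishes on $\{x_2 = 0\}$, so the flow map commutes with reflection across the $x_1$-axis, and then $\partial_{x_2}\rho$ stays odd while $\partial_{x_1}\rho$ stays even along the transport. This reduces the problem on the sector $\Omega = \Omega_{1/4}$ to an odd problem, which is exactly the setting in which Corollary \ref{cor:C^alphaintC^circlealpha} applies after reflecting across $\{x_2=0\}$ (noting $\beta = 1/4$ so $1/\beta - 2 = 2 > \alpha$, so the $C^\alpha$ piece is available too).

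The core of the argument is the a priori estimate. Define $A(t) := \V\omega(t)\V_{\mathring{C}^\alpha} + \V\nabla^\perp\rho(t)\V_{\mathring{C}^\alpha}$. Transporting along the (bi-Lipschitz, since $\nabla u \in L^\infty$) flow map $\Phi_t$ preserves $L^\infty$ norms exactly, and for the weighted Hölder seminorm one gets, by the standard commutator/chain-rule computation together with the product rule Lemma \ref{lem:Holder_product} to handle the weight $|\cdot|^\alpha$ under composition with $\Phi_t$, a bound of the form $\tfrac{d}{dt}\V\omega\V_{\mathring{C}^\alpha} \lesssim \V\nabla u\V_{L^\infty}\V\omega\V_{\mathring{C}^\alpha} + \V\partial_{x_2}\rho\V_{\mathring{C}^\alpha}$, and similarly $\tfrac{d}{dt}\V\nabla^\perp\rho\V_{\mathring{C}^\alpha} \lesssim \V\nabla u\V_{L^\infty}\V\nabla^\perp\rho\V_{\mathring{C}^\alpha} + \V\nabla^2 u\V_{\mathring{C}^\alpha}\V\nabla^\perp\rho\V_{L^\infty}$ (the last term from differentiating the transport equation for $\rho$). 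Now invoke the elliptic estimates: $\V\nabla u\V_{L^\infty} \le C\V\omega\V_{L^\infty}(1 + \ln(1 + \V\omega\V_{\mathring{C}^\alpha}/\V\omega\V_{L^\infty}))$ from \eqref{eq:log}, and $\V\nabla^2 u\V_{\mathring{C}^\alpha} \le C\V\omega\V_{\mathring{C}^\alpha}$ from \eqref{eq:C^circlealpha}. The logarithm is harmless for short-time existence: it yields $A'(t) \le C A(t)(1 + \ln(e + A(t)))$, whose solution does not blow up in finite time, so $A$ stays bounded (in fact grows at most double-exponentially) on $[0,T]$ with $T$ depending only on $A(0)$. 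For the iteration itself I would set up the scheme $u^{(n)} \to (\omega^{(n+1)},\rho^{(n+1)})$ by solving the linear transport equations with velocity $u^{(n)}$, prove uniform-in-$n$ bounds on $[0,T]$ by the estimate just described, and prove contraction in a weaker norm — typically $\V\omega^{(n+1)} - \omega^{(n)}\V_{L^\infty} + \V\nabla^\perp\rho^{(n+1)} - \nabla^\perp\rho^{(n)}\V_{L^\infty}$, or an $L^2$-type norm — using that differences of flow maps are controlled by $\int\V u^{(n)} - u^{(n-1)}\V_{L^\infty}$, together with elliptic regularity to recover $\V u^{(n)} - u^{(n-1)}\V_{L^\infty}$ from $\V\omega^{(n)} - \omega^{(n-1)}\V_{L^\infty}$ (with a logarithmic loss that is absorbed via Osgood/Grönwall). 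Passing to the limit and interpolating recovers the solution in $C([0,T);\mathring{C}^\alpha)$.

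Uniqueness follows from the same contraction estimate applied to two solutions with the same data, and the continuation criterion follows because the only place time-dependence of $T$ entered was through $\int_0^T\V\nabla u_t\V_{L^\infty}\,dt$: as long as this is finite, the Grönwall argument for $A(t)$ (now with $\V\nabla u\V_{L^\infty}$ kept as a given integrable coefficient rather than re-estimated) keeps $A$ finite, and the local existence theorem can be reapplied from a time close to $T$. I expect the main obstacle to be the weighted-Hölder transport estimate — verifying that $\V\cdot\V_{\mathring{C}^\alpha}$ is genuinely preserved (up to the $\V\nabla u\V_{L^\infty}$ factor) under composition with the flow map, which is where the non-translation-invariant weight $|x|^\alpha$ and the behavior of the flow near the corner $x = 0$ interact; here the point is that $\Phi_t$ fixes the origin and is bi-Lipschitz with $|\Phi_t(x)| \approx |x|$, so the weight is essentially transported to a comparable weight, and Lemma \ref{lem:Holder_product} cleans up the error terms. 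A secondary technical point is ensuring all the elliptic estimates are applied on the correct reflected/odd-extended domain so that the Dirichlet Green's function of Lemma \ref{lem:Green} is the relevant kernel.
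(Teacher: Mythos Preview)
Your outline for the a priori estimates and existence is essentially the paper's approach: the paper computes the time derivative of the weighted H\"older quotient $\frac{|X_t(x)|^\alpha\omega_t\circ X_t(x) - |X_t(x')|^\alpha\omega_t\circ X_t(x')}{|X_t(x)-X_t(x')|^\alpha}$ directly (splitting into three pieces according to where $d/dt$ falls), arriving at $\frac{d}{dt}\V\omega_t\V_{\mathring{C}^\alpha} \lesssim \V\nabla^\perp\rho\V_{\mathring{C}^\alpha} + \V\nabla u\V_{L^\infty}\V\omega\V_{\mathring{C}^\alpha}$, and similarly for $\nabla^\perp\rho$. Two minor slips: in your $\nabla^\perp\rho$ estimate the forcing is $\nabla u\,\nabla^\perp\rho$, so the term should be $\V\nabla u\V_{\mathring{C}^\alpha}$, not $\V\nabla^2 u\V_{\mathring{C}^\alpha}$; and the resulting differential inequality is quadratic in $A$, not $A\ln A$ (the log estimate is only relevant when $\rho\equiv 0$). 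For existence the paper runs the same iteration but passes to the limit by compactness rather than contraction.

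The uniqueness step, however, does not go through as you describe, and this is where the paper does something genuinely different. Since $\omega$ need not decay, the velocities grow linearly at infinity, so $\V u^{1} - u^{2}\V_{L^\infty}$ is generically infinite and cannot control flow-map differences; an $L^2$ contraction is equally unavailable. More fundamentally, a vorticity-level stability estimate requires bounding $\tilde u\cdot\nabla\omega^2$, but $\nabla\omega^2$ is not controlled for $\omega^2$ merely in $\mathring{C}^\alpha$. The paper therefore abandons the vorticity formulation for uniqueness and works at the velocity level: writing $\tilde u = u^1-u^2$, $\tilde\rho = \rho^1-\rho^2$, $\tilde p = p^1-p^2$, it proves the weighted pressure bound
\[
\left\V\frac{\nabla\tilde p(x)}{|x|}\right\V_{L^\infty} \lesssim \bigl(\V\nabla u^1\V_{L^\infty}+\V\nabla u^2\V_{L^\infty}\bigr)\left\V\frac{\tilde u(x)}{|x|}\right\V_{L^\infty}\Bigl(1+\ln\tfrac{\V\nabla\tilde u\V_{L^\infty}}{\V\tilde u/|x|\V_{L^\infty}}\Bigr) + \text{(analogous }\tilde\rho\text{ term)}
\]
via explicit computations with the Neumann Green's function on the sector, and then closes an Osgood system for $\V\tilde u(x)/|x|\V_{L^\infty}$ and $\V\tilde\rho(x)/|x|\V_{L^\infty}$. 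This pressure estimate, and the choice of the scale-invariant norm $\V\,\cdot\,/|x|\V_{L^\infty}$ as the contraction norm, is the substantive ingredient you are missing.
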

\begin{remark}
As we have mentioned in the introduction, when the corner of the domain $\Omega$ is strictly less than $\pi/2$, the extra symmetry assumption on the initial data can be dropped. 
\end{remark}
\begin{proof}
For convenience, we work with the pair $(\omega,\nabla^\perp\rho)$: \begin{empheq}[left=\empheqlbrace]{align} 
&\partial_t \omega+ (u\cdot\nabla)\omega = -\partial_{x_2}\rho, \label{eq:LWP2D1} \\
\label{eq:LWP2D2} &\partial_t \nabla^\perp\rho +(u\cdot\nabla)\nabla^\perp\rho = \nabla u \nabla^\perp\rho.
\end{empheq}

\medskip

\textit{1. A priori estimates}

\medskip

Let $(\omega,\nabla^\perp\rho)$ be a smooth solution of the system \eqref{eq:LWP2D1} -- \eqref{eq:LWP2D2} defined on the time interval $[0,T]$. Then, from the bound \begin{equation*}
\begin{split}
\V \nabla u \V_{L^\infty_t\mathring{C}^\alpha} \le C \V \omega \V_{L^\infty_t\mathring{C}^\alpha},
\end{split}
\end{equation*} it follows in particular that the flow map $X(t,\cdot)$ exists as a bi-Lipschitz map of the domain. Writing equations along the flow, we obtain \begin{equation*}
\begin{split}
\begin{cases}
&\pr_t \omega\circ X = -\partial_{x_2}\rho\circ X,\\
&\pr_t \nabla^\perp \rho \circ X = \nabla u \circ X \cdot \nabla^\perp\rho\circ X. 
\end{cases}
\end{split}
\end{equation*} From this, the $L^\infty$ bounds are immediate: \begin{equation}\label{eq:Linfty_bounds}
\begin{split}
&\frac{d}{dt} \V \omega(t)\V_{L^\infty} \le \V \nabla\rho(t)\V_{L^\infty}, \\
&\frac{d}{dt} \V \nabla\rho(t)\V_{L^\infty} \le \V \nabla u(t)\V_{L^\infty} \V \nabla\rho(t)\V_{L^\infty}.
\end{split}
\end{equation} For $\mathring{C}^\alpha$-bounds, we take two points $x \ne x'$ and compute \begin{equation*}
\begin{split}
\frac{d}{dt} \left[ \frac{ |X_t(x)|^\alpha \omega_t\circ X_t(x) - |X_t(x')|^\alpha \omega_t\circ X_t(x')}{|X_t(x) - X_t(x')|^\alpha} \right] = I + II + III,
\end{split}
\end{equation*} where $I, II$, and $III$ denote the terms obtained by taking $d/dt$ on $\omega\circ X$, $X$ on the numerator, and $X$ on the denominator. Let us write for simplicity that $z = X_t(x)$ and $z' = X_t(x')$, and suppress from writing out dependence in time. Then, \begin{equation*}
\begin{split}
|I| = \left|  \frac{-|z|^\alpha \partial_{x_2}\rho(z) + |z'|^\alpha \partial_{x_2}\rho(z')}{|z-z'|^\alpha}  \right| \le \V \nabla^\perp\rho\V_{\mathring{C}^\alpha}. 
\end{split}
\end{equation*} Next, \begin{equation*}
\begin{split}
II &= \alpha \frac{z \cdot u(z)}{|z|^{2}} \frac{|z|^\alpha\omega(z)}{|z-z'|^\alpha} - \alpha \frac{z' \cdot u(z')}{|z'|^{2}} \frac{|z'|^\alpha\omega(z')}{|z-z'|^\alpha} \\
&= \alpha \frac{z\cdot u(z)}{|z|^2} \left( \frac{|z|^\alpha \omega(z) - |z'|^\alpha\omega(z')}{|z-z'|^\alpha} \right) + \alpha \left( \frac{z\cdot u(z)}{|z|^2} - \frac{z'\cdot u(z')}{|z'|^2} \right) \frac{|z'|^\alpha \omega(z')}{|z-z'|^\alpha},
\end{split}
\end{equation*} and the latter term can be further rewritten as \begin{equation*}
\begin{split}
\alpha \left(  \frac{z\cdot (u(z) - u(z')) + (z-z') \cdot u(z')}{|z|^2}  + \frac{z'\cdot u(z')}{|z|^2} - \frac{z'\cdot u(z')}{|z'|^2}  \right)\frac{|z'|^\alpha \omega(z')}{|z-z'|^\alpha}.
\end{split}
\end{equation*} Then, we may bound \begin{equation*}
\begin{split}
|II| \lesssim \left| \frac{u(z)}{z}\right|_{L^\infty} \V \omega \V_{\mathring{C}^\alpha}  +  \frac{\V \nabla u\V_{L^\infty} |z-z'|}{|z|} \frac{|z'|^\alpha |\omega|_{L^\infty}}{|z-z'|^\alpha} + \frac{|z-z'|(|z|+|z'|)}{|z|^2|z'|^2} |z'|^2 \left| \frac{u(z')}{z'}\right|_{L^\infty} \frac{|z'|^\alpha \V\omega\V_{L^\infty}}{|z-z'|^\alpha}
\end{split}
\end{equation*} and we could have assumed that $|z| \ge |z'|$, $|z| \ge |z-z'|/2$ (or switch the role of $z$ and $z'$ otherwise). This gives \begin{equation*}
\begin{split}
|II| \le C\V \omega\V_{\mathring{C}^\alpha} \V \nabla u \V_{L^\infty}. 
\end{split}
\end{equation*} Lastly, we have \begin{equation*}
\begin{split}
III = -\alpha \left[ \frac{|z|^\alpha \omega(z) - |z'|^\alpha\omega(z')}{|z-z'|^\alpha} \frac{(z-z')\cdot (u(z) - u(z'))}{|z-z'|^\alpha} \right]
\end{split}
\end{equation*} and it is easy to see that \begin{equation*}
\begin{split}
|III| \le C\V \nabla u \V_{L^\infty}\V \omega\V_{\mathring{C}^\alpha}. 
\end{split}
\end{equation*} Collecting the terms and then integrating in time, we obtain that \begin{equation*}
\begin{split}
&\left|\frac{ |X_t(x)|^\alpha \omega_t\circ X_t(x) - |X_t(x')|^\alpha \omega_t\circ X_t(x')}{|X_t(x) - X_t(x')|^\alpha}  \right|\\
&\qquad\le \left| \frac{|x|^\alpha\omega_0(x) - |x'|^\alpha\omega_0(x')}{|x-x'|^\alpha}\right| + \int_0^t \V \nabla^\perp\rho_s\V_{\mathring{C}} + C\V\nabla u_s \V_{L^\infty}\V \omega_s\V_{\mathring{C}^\alpha}ds. 
\end{split}
\end{equation*} We then obtain that \begin{equation}\label{eq:apriori_omega}
\begin{split}
\V \omega_t\V_{\mathring{C}^\alpha} \le \V \omega_0 \V_{\mathring{C}^\alpha} + C\int_0^t \V \nabla^\perp\rho_s\V_{\mathring{C}}  + \V\nabla u_s\V_{L^\infty} \V \omega_s\V_{\mathring{C}^\alpha} ds.
\end{split}
\end{equation}
Proceeding similarly for $\nabla^\perp\rho$, we obtain this time that \begin{equation}\label{eq:apriori_rho}
\begin{split}
\V \nabla^\perp\rho_t\V_{\mathring{C}^\alpha} \le \V \nabla^\perp\rho_0 \V_{\mathring{C}^\alpha} + C\int_0^t \left(\V\nabla u_s\V_{L^\infty}+ \V\nabla\rho_s\V_{L^\infty} \right) \left(\V\nabla \rho_s\V_{C^\alpha} + \V\omega_s\V_{C^{\alpha}}\right) ds.
\end{split}
\end{equation} Inequalities \eqref{eq:apriori_omega} and \eqref{eq:apriori_rho}, together with the bound $\V \nabla u\V_{L^\infty} \le C \V \omega\V_{\mathring{C}^\alpha}$  finishes an a priori estimate for $\omega$ and $\nabla\rho$; there exists a time interval $T = T(\V \omega_0\V_{\mathring{C}^\alpha},\V \nabla^\perp\rho_0\V_{\mathring{C}^\alpha}) > 0$ such that $\V \omega_t\V_{\mathring{C}^\alpha}$ and $\V\nabla^\perp\rho_t\V_{\mathring{C}^\alpha}$ remains finite in the interval $[0,T)$. 
Moreover, it is straightforward to show that $\int_0^{T} \V \nabla u_t\V_{L^\infty}dt < +\infty$ guarantees that the solution can be extended past $T$: to see this, under this assumption, first observe from the $L^\infty$-bounds \eqref{eq:Linfty_bounds} that $\V \omega\V_{L^\infty}$ and $\V \nabla^\perp \rho \V_{L^\infty}$ stays uniformly bounded up to $T$. Then, by writing $Y(t) = \V \nabla^\perp\rho_t\V_{\mathring{C}^\alpha} + \V \nabla^\perp\rho_t\V_{\mathring{C}^\alpha} $, \eqref{eq:apriori_omega} and \eqref{eq:apriori_rho} can be recast into the form \begin{equation*}
\begin{split}
Y(t) \le Y_0 + C \int_0^t Y(s)ds. 
\end{split}
\end{equation*} This guarantees that $\V \omega_t\V_{\mathring{C}^\alpha}$ and $\V \nabla^\perp\rho_t\V_{\mathring{C}^\alpha}$ stay uniformly bounded up to $T$. 

\medskip

\textit{2. Existence}

\medskip

Given initial data $\omega_0, \rho_0$ in $\mathring{C}^\alpha$, we now construct a solution, using a simple iteration scheme. We shall work in a time interval $[0,T]$, where (with a slight abuse of notation) $T > 0$ now denotes some time where $\omega,\nabla^\perp\rho$ remains uniformly bounded in $\mathring{C}^\alpha$ for $t \in [0,T]$. 
Define $u^{(0)}(t,\cdot)$ to be the velocity associated with $\omega^{(0)}(t,\cdot) := \omega_0(\cdot)$ for $t \in [0,T]$. In particular, $u^{(0)}$ is Lipschitz in space uniformly in time, so $X^{(0)}$ be the corresponding flow map, defined on $[0,T]$. 
Given $u^{(n)}$ and the associated flow $X^{(n)}$ on the time interval $[0,T]$, we may inductively define $\omega^{(n+1)}$ and $\rho^{(n+1)}$ by solving \begin{equation*}
\begin{split}
&\frac{d}{dt} \omega^{(n+1)} \circ X^{(n)} = -\partial_{x_2}\rho^{(n+1)}\circ X^{(n)}, \\
&\frac{d}{dt} \nabla^\perp\rho^{(n+1)}\circ X^{(n)}  = \nabla u^{(n)}\circ X^{(n)} \cdot \nabla^\perp\rho^{(n+1)}\circ X^{(n)}.
\end{split}
\end{equation*}
On the time interval $[0,T]$, the sequence of velocities $\{ u^{(n)} \}_{n \ge 0}$ and densities $\{ \rho^{(n)} \}_{n \ge 0}$  are uniformly bounded in $L^\infty([0,T];\mathrm{Lip}_{loc})$ and their gradients are uniformly bounded in $L^\infty([0,T];\mathring{C}^\alpha)$. In particular, by passing to a subsequence, we have \begin{equation*}
\begin{split}
u^{(n)} \rightarrow u,\quad \rho^{(n)} \rightarrow \rho,
\end{split}
\end{equation*} in $L^\infty_t \mathrm{Lip}_{loc}$, for some locally Lipschitz functions $u$ and $\rho$, satisfying $\nabla u, \nabla^\perp\rho \in L^\infty([0,T];\mathring{C}^\alpha)$. This also guarantees that the sequence of flow maps converge pointwise: $X^{(n)}(t,x) \rightarrow X(t,x)$ for each $t \in [0,T]$ and $x$, where $X$ is the flow generated by $u$. At this point, it is easy to show that $\omega = \nabla\times u$ and $\nabla^\perp\rho$ is a solution pair to the system \eqref{eq:LWP2D1} -- \eqref{eq:LWP2D2}. 

\medskip

\textit{3. Uniqueness}

\medskip

Assume that there exist two solution pairs $(\omega^1, \rho^1)$ and $(\omega^2,\rho^2)$ defined on some time interval $[0,T)$ for $T > 0$, satisfying the assumptions of the theorem and both corresponding to the initial data $(\omega_0,\rho_0)$. We denote the corresponding velocity and pressure by $(u^1, p^1)$ and $(u^2,p^2)$, respectively. Then, we simply set $\tilde{\omega} := \omega^1 - \omega^2$, $\tilde{\rho} := \rho^1 - \rho^2$, $\tilde{u} := u^1 - u^2$, and $\tilde{p} := p^1 - p^2$, which all vanish identically for $t = 0$. Taking the divergence of the both sides of the velocity equation, one obtains \begin{equation*}
\begin{split}
\Delta p^i = - \nabla \cdot \left( u^i \cdot \nabla u^i \right) - \partial_1 \rho^i = 2(\pr_1 u_1^i \pr_2 u_2^i - \pr_1 u_2^i \pr_2 u_1^i) - \partial_1 \rho^i 
\end{split}
\end{equation*} for $i = 1, 2$. Together with the Neumann boundary condition \begin{equation*}
\begin{split}
\partial_n p^i = -\rho^i n_1 
\end{split}
\end{equation*} where $n = (n_1,n_2)$, the pressure is uniquely determined. Then, we claim that the following bound holds: \begin{equation}\label{eq:pressure_bound}
\begin{split}
\left\V \frac{\nabla\tilde{p}(x)}{|x|} \right\V_{L^\infty(\Omega)} &\le C \left[ \left(\V \nabla u^1 \V_{L^\infty(\Omega)} + \V \nabla u^2 \V_{L^\infty(\Omega)}\right) \left\V \frac{\tilde{u}(x)}{|x|} \right\V_{L^\infty(\Omega)} \left( 1 + \ln\left( \frac{ \V \nabla\tilde{u} \V_{L^\infty } }{ \V |x|^{-1}\tilde{u}(x)\V_{L^\infty }} \right) \right) \right. \\
   &\qquad\qquad + \left. \left\V \frac{\tilde{\rho}(x)}{|x|} \right\V_{L^\infty(\Omega)}   \left( 1 + \ln\left( \frac{ \V \nabla\tilde{\rho} \V_{L^\infty } }{ \V |x|^{-1}\tilde{\rho}(x)\V_{L^\infty }} \right) \right) \right]
\end{split}
\end{equation} for each $t \in [0,T)$. Assuming \eqref{eq:pressure_bound} for the moment, let us complete the proof of uniqueness. Taking the difference of the velocity equations for $u^1$ and $u^2$, we obtain \begin{equation*}
\begin{split}
\pr_t \tilde{u} + u^1\cdot\nabla\tilde{u} + \tilde{u}\cdot\nabla u^2 + \nabla \tilde{p} = \begin{pmatrix}
-\tilde{\rho} \\ 0
\end{pmatrix}.
\end{split}
\end{equation*} Dividing both sides by $|x|$, composing with the flow, and taking absolute values gives \begin{equation*}
\begin{split}
\frac{d}{dt} \left\V \frac{\tilde{u}(x)}{|x|} \right\V_{L^\infty } \le  \left\V \frac{u^1(x)}{|x|} \right\V_{L^\infty }\left\V \frac{\tilde{u}(x)}{|x|} \right\V_{L^\infty } + \left\V \nabla u^2 \right\V_{L^\infty }\left\V \frac{\tilde{u}(x)}{|x|} \right\V_{L^\infty } + \left\V \frac{\nabla\tilde{p}(x)}{|x|} \right\V_{L^\infty } + \left\V \frac{\tilde{\rho}(x)}{|x|} \right\V_{L^\infty } .
\end{split}
\end{equation*} Applying the inequality \eqref{eq:pressure_bound}, we obtain  \begin{equation}\label{eq:vel_unique}
\begin{split}
\frac{d}{dt} \left\V \frac{\tilde{u}(x)}{|x|} \right\V_{L^\infty } \le  C &\left[\left\V \frac{\tilde{u}(x)}{|x|} \right\V_{L^\infty } \left( 1 + \ln\left( \frac{C}{ \V |x|^{-1} \tilde{u}(x)\V_{L^\infty} } \right) \right)  \right. \\
&\qquad+ \left. \left\V \frac{\tilde{\rho}(x)}{|x|} \right\V_{L^\infty } \left( 1 + \ln\left( \frac{C}{ \V |x|^{-1} \tilde{\rho}(x)\V_{L^\infty} } \right) \right)  \right],
\end{split}
\end{equation} where now $C > 0$ depends on $\V \nabla u^1\V_{L^\infty} + \V \nabla u^2\V_{L^\infty}$ and  $\V \nabla \rho^1\V_{L^\infty} + \V \nabla \rho^2\V_{L^\infty}$.   On the other hand, taking the difference of the density equations for $\rho^1$ and $\rho^2$, we obtain \begin{equation*}
\begin{split}
\pr_t \tilde{\rho} + u^1 \cdot\nabla\tilde{\rho} + \tilde{u}\cdot\nabla \rho^2 = 0,
\end{split}
\end{equation*} and similarly as in the above, we obtain \begin{equation}\label{eq:den_unique}
\begin{split}
\frac{d}{dt} \left\V \frac{\tilde{\rho}(x)}{|x|} \right\V_{L^\infty } \le \V \nabla u^1 \V_{L^\infty} \left\V \frac{\tilde{\rho}(x)}{|x|} \right\V_{L^\infty } + \V \nabla \rho^2\V_{L^\infty } \left\V \frac{\tilde{u}(x)}{|x|} \right\V_{L^\infty }.
\end{split}
\end{equation} From the inequalities \eqref{eq:vel_unique} and \eqref{eq:den_unique}, we see that once we set \begin{equation*}
\begin{split}
A(t):= \left\V \frac{\tilde{\rho}(x)}{|x|} \right\V_{L^\infty }, \qquad B(t) := \left\V \frac{\tilde{u}(x)}{|x|} \right\V_{L^\infty },
\end{split}
\end{equation*} then, we have \begin{equation*}
\begin{split}
\frac{d}{dt} A(t) &\le C (A(t) + B(t) ), \\
\frac{d}{dt} B(t) &\le C\left( A(t) \left(1 + \ln\left( \frac{C}{A(t)} \right) \right) + B(t)\left(1 + \ln\left( \frac{C}{B(t)} \right) \right)\right)
\end{split}
\end{equation*} for some constant $ C > 0$ on the time interval $[0,T_1]$ for any $T_1 < T$. This suffices to show that $A(t) = B(t) = 0$ on $[0,T)$ (see for instance \cite[Chap. 2]{MP}). This finishes the proof of uniqueness, and it only remains to establish \eqref{eq:pressure_bound}. 

We begin by noting that \begin{equation*}
\begin{split}
\Delta \tilde{p} = 2\left( \pr_1\tilde{u}_1 \pr_2 u_2^1 - \pr_1 u_2^1 \pr_2 \tilde{u}_1 + \pr_1 u_1^2 \pr_2 \tilde{u}_2 - \pr_1 \tilde{u}_2 \pr_2 u_1^2 \right) - \pr_1 \tilde{\rho}
\end{split}
\end{equation*} holds in $\Omega$ and $\pr_n \tilde{p} = -\tilde{\rho} n_1$ on $\partial\Omega$. Then, \begin{equation*}
\begin{split}
\tilde{p} = \int_\Omega 2G_N(x,y)\left( \pr_1\tilde{u}_1 \pr_2 u_2^1 - \pr_1 u_2^1 \pr_2 \tilde{u}_1 + \pr_1 u_1^2 \pr_2 \tilde{u}_2 - \pr_1 \tilde{u}_2 \pr_2 u_1^2 \right)(y) dy  - \int_{ \partial\Omega} G_N(x,z)\tilde{\rho}(z)n_1(z) dz,
\end{split}
\end{equation*} where $G_N(\cdot,\cdot)$ is the Neumann Green's function for the Laplacian on $\Omega$, explicitly given by \begin{equation*}
\begin{split}
G_N(x,y) = \frac{1}{2\pi}\left( \ln|x - y| + \ln |x - \hat{y}| + \ln|x+y| + \ln|x+\hat{y}| \right), \qquad \hat{y} = (y_1,-y_2).
\end{split}
\end{equation*} Since $\tilde{p}$, $\tilde{\rho}n_1$, and $\pr_1\tilde{u}_1 \pr_2 u_2^1 - \pr_1 u_2^1 \pr_2 \tilde{u}_1 + \pr_1 u_1^2 \pr_2 \tilde{u}_2 - \pr_1 \tilde{u}_2 \pr_2 u_1^2 $ are even with respect to the axis $x_2 = 0$, we may replace the kernel by \begin{equation*}
\begin{split}
\tilde{G}(x,y) := \frac{1}{2\pi}\left( \ln|x - y| + \ln |x -y^\perp| + \ln|x+y| + \ln|x+y^\perp| \right). 
\end{split}
\end{equation*} In the following, we shall use that for $|x-y| \ge 2|x|$, we have the following decay rates in $y$: \begin{equation*}
\begin{split}
\left| \nabla_x\tilde{G}(x,y) \right| \le  \frac{C|x|^2}{|y|^3}  ,\qquad \left| \nabla_x^2\tilde{G}(x,y) \right| \le  \frac{C|x|^2}{|y|^4}
\end{split}
\end{equation*} for some absolute constant $C > 0$. Differentiating the expression for $\tilde{p}$, we obtain \begin{equation*}
\begin{split}
\nabla\tilde{p} = \int_{ \Omega} 2 \nabla_x\tilde{G}(x,y) \left( \pr_1\tilde{u}_1 \pr_2 u_2^1 - \pr_1 u_2^1 \pr_2 \tilde{u}_1 + \pr_1 u_1^2 \pr_2 \tilde{u}_2 - \pr_1 \tilde{u}_2 \pr_2 u_1^2\right)(y) dy - \int_{\partial\Omega} \nabla_x \tilde{G}(x,z)\tilde{\rho}(z)n_1(z) dz .
\end{split}
\end{equation*} We re-write the second term as \begin{equation*}
\begin{split}
\left[\int_{\partial\Omega \cap \{ |x - z | \le \epsilon|x| \}} + \int_{\partial\Omega \cap \{ \epsilon|x|< |x - z | \le 2|x| \}} + \int_{\partial\Omega \cap \{ |x - z | > 2|x| \}} \right]  \nabla_x \tilde{G}(x,z)\tilde{\rho}(z)n_1(z) dz 
\end{split}
\end{equation*} for some $\epsilon \le 1$ to be chosen below, and then the second piece is bounded in absolute value simply by \begin{equation*}
\begin{split}
\int_{\partial\Omega \cap \{ \epsilon |x| <  |x - z | \le 2|x| \}} \frac{|z|}{|x-z|} \cdot \frac{|\tilde{\rho}(z)|}{|z|} dz \le C|x| \ln\left(\frac{2}{\epsilon}\right) \left\V \frac{\tilde{\rho}(z)}{|z|} \right\V_{L^\infty(\Omega)},
\end{split}
\end{equation*} and the last part by \begin{equation*}
\begin{split}
C\int_{\partial\Omega \cap \{ |x - z | > 2|x| \}} \frac{|z||x|}{|z|^3} \cdot \frac{|\tilde{\rho}(z)|}{|z|} dz \le C|x| \left\V \frac{\tilde{\rho}(z)}{|z|} \right\V_{L^\infty(\Omega)}.
\end{split}
\end{equation*} In the first term, we may subtract \begin{equation*}
\begin{split}
\tilde{\rho}(x) \int_{\partial\Omega \cap \{ |x - z | \le \epsilon|x| \}} \nabla_x \tilde{G}(x,z) n_1(z)dz 
\end{split}
\end{equation*} (which is bounded in absolute value by a constant multiple of $|x| \V |z|^{-1}\tilde{\rho}(z)\V_{L^\infty }$) to obtain a bound \begin{equation*}
\begin{split}
\int_{\partial\Omega \cap \{ |x - z | \le \epsilon|x| \}}C |x-z|  |\nabla_x \tilde{G}(x,z)   | |\nabla\tilde{\rho}(z)| dz \le C|x|\epsilon \V \nabla \tilde{\rho}\V_{L^\infty } .
\end{split}
\end{equation*} Collecting the bounds, and choosing $\epsilon = \frac{  \V |z|^{-1}\tilde{\rho}(z)\V_{L^\infty } }{2\V \nabla\tilde{\rho} \V_{L^\infty }}  < 1$ gives  \begin{equation*}
\begin{split}
\frac{1}{|x|}\left| \int_{\partial\Omega} \nabla_x \tilde{G}(x,z)\tilde{\rho}(z)n_1(z) dz  \right| \le C\left\V \frac{\tilde{\rho}(z)}{|z|} \right\V_{L^\infty(\Omega)} \left( 1 + \ln\left( \frac{ \V \nabla\tilde{\rho} \V_{L^\infty } }{ \V |z|^{-1}\tilde{\rho}(z)\V_{L^\infty }} \right) \right).
\end{split}
\end{equation*} Now we move on to the first term in the expression for $\nabla\tilde{p}$. Similarly as in the above, we begin by splitting the integral to regions $\int_{\Omega \cap \{ |x-y| > \epsilon|x| \} }$ and $\int_{ \Omega \cap \{ |x-y| \le \epsilon|x| \} } $, where $0 < \epsilon \le 1$ will be a constant to be chosen below. In the latter region, we simply use the gradient bounds for velocities to obtain a bound \begin{equation*}
\begin{split}
&\left| \int_{ \Omega\cap \{ |x-y| \le \epsilon|x| \}} 2 \nabla_x\tilde{G}(x,y) \left( \pr_1\tilde{u}_1 \pr_2 u_2^1 - \pr_1 u_2^1 \pr_2 \tilde{u}_1 + \pr_1 u_1^2 \pr_2 \tilde{u}_2 - \pr_1 \tilde{u}_2 \pr_2 u_1^2\right)(y) dy\right| \\
&\qquad \le C\epsilon|x|   \left(\V\nabla u^1 \V_{L^\infty} + \V \nabla u^2\V_{L^\infty}\right)  \V \nabla \tilde{u}\V_{L^\infty}.
\end{split}
\end{equation*} Then, in the other region, we integrate by parts to obtain \begin{equation*}
\begin{split}
-2\int_{\Omega \cap \{ |x-y| > \epsilon|x| \} } &\left(\partial_{x_1} \nabla_x \tilde{G}(x,y) \pr_2 u_2^1(y) - \partial_{x_2}\nabla_x \tilde{G}(x,y) \partial_1 u_2^1(y) \right) \tilde{u}_1(y) \\
 &\qquad + \left( \pr_{x_2}\nabla_x\tilde{G}(x,y)\pr_1 u_1^2(y) - \partial_{x_1} \nabla_x \tilde{G}(x,y) \pr_2 u_1^2(y)\right) \tilde{u}_2(y) dy 
\end{split}
\end{equation*} together with the boundary term which is bounded in absolute value by $$C( \V \nabla u^1 \V_{L^\infty } + \V \nabla u^2 \V_{L^\infty}) \left\V \frac{\tilde{u}(y)}{|y|}\right\V_{L^\infty }.$$ Then, the above integral can be bounded by considering separately the regions $\{ \epsilon|x| \le  |x-y | \le 2|x| \}$ and $\{ 2|x| < |x-y| \}$. In the former region, we simply bound \begin{equation*}
\begin{split}
\left| \int_{\Omega \cap \{ \epsilon|x| \le  |x-y | \le 2|x| }   \partial_{x_1} \nabla_x \tilde{G}(x,y) \pr_2 u_2^1(y) \tilde{u}_1(y) dy \right| &\le C\int_{\Omega \cap \{ \epsilon|x| \le  |x-y | \le 2|x| } \frac{|y|}{|x-y|^2} |\partial_2 u_2^1(y)| \frac{|\tilde{u}_1(y)|}{|y|} dy  \\
&\le C|x| \ln\left( \frac{2}{\epsilon} \right) \V \nabla u^1 \V_{L^\infty}  \left\V \frac{\tilde{u}(y)}{|y|} \right\V_{L^\infty} . 
\end{split}
\end{equation*} In the case $2|x| < |x-y|$, we use decay of $\nabla_x^2 \tilde{G}$ in $y$ to obtain a bound  of the form $C|x| \V \nabla u^1\V_{L^\infty } \V |y|^{-1} \tilde{u}(y)\V_{L^\infty}$. Collecting the bounds and choosing $\epsilon = \frac{ \V |y|^{-1} \tilde{u}(y)\V_{L^\infty} }{2 \V \nabla \tilde{u}\V_{L^\infty}} < 1$ gives \begin{equation*}
\begin{split}
&\frac{1}{|x|} \left| \int_{ \Omega} 2 \nabla_x\tilde{G}(x,y) \left( \pr_1\tilde{u}_1 \pr_2 u_2^1 - \pr_1 u_2^1 \pr_2 \tilde{u}_1 + \pr_1 u_1^2 \pr_2 \tilde{u}_2 - \pr_1 \tilde{u}_2 \pr_2 u_1^2\right)(y) dy \right| \\
&\qquad\le C \left(  \V \nabla u^1 \V_{L^\infty}  + \V \nabla u^2 \V_{L^\infty}   \right) \left\V \frac{\tilde{u}(y)}{|y|} \right\V_{L^\infty } \left( 1 + \ln\left( \frac{ \V \nabla\tilde{u} \V_{L^\infty } }{ \V |y|^{-1}\tilde{u}(y)\V_{L^\infty }} \right) \right).
\end{split}
\end{equation*} This finishes the proof of \eqref{eq:pressure_bound}.


\end{proof}
\subsection{Local well-posedness for the $1D$ system describing homogeneous data}
\begin{proposition}[Local well-posedness for the $1D$ system]\label{prop:LWP1D}
Let $(g_0,P_0)$ satisfy the following assumptions: for some $k \ge 0$ and $0 \le \alpha \le 1$, let $g_0 \in C^{k,\alpha}[-\pi/4,\pi/4]$ be odd and $P_0 \in C^{k+1,\alpha}[-\pi/4,\pi/4]$ even. Then, there exists some $T = T( \V g_0 \V_{C^{k,\alpha}},\V P_0\V_{C^{k+1,\alpha}}  ) > 0$, so that there exists a unique solution $(g,P)$ to the system \eqref{BSI1} -- \eqref{BSI2} in the class $(g,P) \in C([0,T); C^{k,\alpha} \times C^{k+1,\alpha})  $. Moreover, the solution can be continued past $T$ if and only if \begin{equation*}
\begin{split}
\int_0^T \V g_t\V_{L^\infty} dt < + \infty
\end{split}
\end{equation*} holds. 
\end{proposition}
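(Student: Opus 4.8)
The plan is to follow the same three-step scheme (a priori estimates, existence by iteration, uniqueness) used for Theorem~\ref{thm:LWP2D}; the analysis here is considerably lighter because everything lives on the bounded interval $[-\pi/4,\pi/4]$ and the relevant elliptic operator is a genuine second-order ODE. Recall that \eqref{BSI1}--\eqref{BSI2} consists of two transport equations for $g$ and $P$ sharing the common velocity $2G$, where $G$ is recovered from $g$ by solving $\partial_{\theta\theta}G+4G=g$ on $[-\pi/4,\pi/4]$ with Dirichlet data, the forcing in the $g$-equation involving $P$ up to one $\theta$-derivative and the forcing in the $P$-equation being of order zero in $P$. The first point to record is that, on the space of odd functions, $\partial_{\theta\theta}+4$ is invertible with Dirichlet boundary conditions---the only possible obstruction, the eigenfunction $\cos(2\theta)$ at eigenvalue $4$, is even and therefore irrelevant---and the solution operator gains two derivatives: $\|G\|_{C^{j+2,\alpha}}\le C\|g\|_{C^{j,\alpha}}$ for every $j\ge 0$, while $\|G\|_{W^{2,\infty}}\le C\|g\|_{L^\infty}$. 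In particular $2G$ is a Lipschitz field on $[-\pi/4,\pi/4]$ vanishing at $\pm\pi/4$, so it generates a flow $\eta(t,\cdot)$ which is a bi-Lipschitz self-map of the closed interval, with constants controlled by $\exp\big(C\int_0^t\|g_s\|_{L^\infty}\,ds\big)$.

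For the a priori estimates one composes the equations with $\eta$, so that $\tfrac{d}{dt}(g\circ\eta)$ and $\tfrac{d}{dt}(P\circ\eta)$ equal the respective forcings evaluated along $\eta$. The $L^\infty$ bounds are then immediate, with $\tfrac{d}{dt}\|P_t\|_{L^\infty}\lesssim\|G_t'\|_{L^\infty}\|P_t\|_{L^\infty}\lesssim\|g_t\|_{L^\infty}\|P_t\|_{L^\infty}$ and $\tfrac{d}{dt}\|g_t\|_{L^\infty}\lesssim\|P_t\|_{C^1}$; the single derivative of $P$ appearing in the forcing of the $g$-equation is exactly why the datum for $P$ is taken one notch smoother than that for $g$. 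Differentiating the flow-composed equations $k$ and $k+1$ times respectively, and using the two-derivative gain of $g\mapsto G$ to control the transport commutators, one reaches a closed inequality of the shape $\tfrac{d}{dt}\big(\|g_t\|_{C^{k,\alpha}}+\|P_t\|_{C^{k+1,\alpha}}\big)\le C\big(1+\|g_t\|_{L^\infty}+\|P_t\|_{C^1}\big)\big(\|g_t\|_{C^{k,\alpha}}+\|P_t\|_{C^{k+1,\alpha}}\big)$, up to lower-order contributions. Since the right side is at most quadratic in $N(t):=\|g_t\|_{C^{k,\alpha}}+\|P_t\|_{C^{k+1,\alpha}}$, this yields local existence on an interval $[0,T]$ with $T=T(\|g_0\|_{C^{k,\alpha}},\|P_0\|_{C^{k+1,\alpha}})$; and since under the hypothesis $\int_0^T\|g_t\|_{L^\infty}\,dt<\infty$ one checks successively that $\|P_t\|_{L^\infty}$, then $\|P_t\|_{C^1}$, then $\|g_t\|_{L^\infty}$, and finally $N(t)$ stay bounded on $[0,T]$, the continuation criterion follows. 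The odd/even symmetries and the vanishing of $G$ at the corners are propagated because $\eta$ is odd and the forcing terms respect the parities.

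For existence one runs the standard iteration: given $(g^{(n)},P^{(n)})$ with associated $G^{(n)}$, velocity $2G^{(n)}$ and flow $\eta^{(n)}$, define $P^{(n+1)}$ and then $g^{(n+1)}$ by solving the now-linear transport equations along $\eta^{(n)}$. The estimates above apply uniformly to this sequence on a fixed time interval, and Arzel\`a--Ascoli together with weak-$*$ compactness of bounded sets in H\"older spaces extracts a subsequence converging to a solution in the required class. Uniqueness is a direct Gr\"onwall estimate for the differences $\tilde g=g^1-g^2$, $\tilde P=P^1-P^2$, with $\tilde G$ the corresponding elliptic solve: because in one dimension $\|\tilde G\|_{W^{2,\infty}}\lesssim\|\tilde g\|_{L^\infty}$ with no borderline loss, the estimate closes directly at the level of $\|\tilde g\|_{L^\infty}+\|\tilde P\|_{C^1}$---no log-Lipschitz refinement of the type needed in Theorem~\ref{thm:LWP2D} is required---and Gr\"onwall forces $\tilde g\equiv\tilde P\equiv 0$.

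The genuinely delicate points are just two. First, the invertibility of $\partial_{\theta\theta}+4$ under Dirichlet conditions, which is precisely where the oddness of $g_0$ (equivalently, a restriction on the opening angle) enters. Second, keeping the derivative count honest through the iteration: the $g$-forcing spends one derivative of $P$, so the scheme must be arranged---solve for $P^{(n+1)}$ before $g^{(n+1)}$---so that this loss is always absorbed by the built-in shift $C^{k,\alpha}\times C^{k+1,\alpha}$. Beyond these, the argument is entirely parallel to, and simpler than, the proof of Theorem~\ref{thm:LWP2D}.
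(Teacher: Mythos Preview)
Your proposal is correct and follows essentially the same approach as the paper's own proof: the paper also works along characteristics of $2G$, records the $L^\infty$ bound for $P$, differentiates the $P$-equation once to get the estimate on $P'$, uses $\|G''\|_{L^\infty}\le C\|g\|_{L^\infty}$ to close, and notes the argument is strictly analogous to the 2D case. The paper's proof is actually only a sketch---it omits the iteration and uniqueness steps entirely---so your write-up is more complete than the original, and your remarks on the invertibility of $\partial_{\theta\theta}+4$ on odd functions and on the ordering of the iteration to absorb the derivative loss are useful clarifications the paper leaves implicit.
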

\begin{proof}
We only sketch the proof, as it is strictly analogous to the arguments for the local well-posedness for $2D$ Boussinesq.
We take $k = 0$ and $0 < \alpha <1$ for simplicity. Then, from the equation for $g$, we obtain that \begin{equation*}
\begin{split}
\V g_t\V_{C^\alpha} \le \V g_0\V_{C^\alpha} + C\int_0^t \V P_s\V_{C^{1,\alpha}} + \V G'_s\V_{L^\infty} \V g_s\V_{C^\alpha} ds. 
\end{split}
\end{equation*} Next, from the $P$ equation, it follows that \begin{equation*}
\begin{split}
\V P_t \V_{L^\infty} \le \V P_0\V_{L^\infty} + \int_0^t \V G'_s\V_{L^\infty} \V P_s\V_{L^\infty}ds . 
\end{split}
\end{equation*} Then, from \begin{equation*}
\begin{split}
\pr_t P' + 2G \partial_{\theta}P' = G''P - G'P', 
\end{split}
\end{equation*} we obtain \begin{equation*}
\begin{split}
\V P_t'\V_{L^\infty} \le \V P_0\V_{\mathrm{Lip}} + \int_0^t \V G'_s\V_{\mathrm{Lip}}\V P_s\V_{\mathrm{Lip}}ds
\end{split}
\end{equation*} as well as \begin{equation*}
\begin{split}
\V P'_t\V_{C^\alpha} \le \V P_0\V_{C^{1,\alpha}} + C \int_0^t \left( \V G''_s\V_{L^\infty} + \V P_s\V_{\mathrm{Lip}} \right) \V P'_s\V_{C^\alpha}ds.
\end{split}
\end{equation*} From $\V G''\V_{L^\infty} \le C \V g \V_{L^\infty}$, one obtains a priori estimates as well as the proof of the fact that $\V g_t\V_{L^\infty}$ controls blow-up. 
\end{proof}
We supply a proof of a simple $L^\infty$-stability result for $C^1 \times C^2$ solutions for the $1D$ system. 
\begin{lemma}\label{lem:stability}
Let $(\overline{g},\overline{P})$ be a solution to \eqref{BSI1} -- \eqref{BSI2} belonging to $C^0([0,T];C^{k+1}\times C^{k+2})  $, with initial data $(\overline{g}_0, \overline{P}_0)$. Let $(g,P)$ be another $C^{k+1} \times C^{k+2}$ solution with initial data $(g_0,P_0)$ on the same time interval, so that $\V g_0 -\overline{g}_0\V_{C^{k+1}} + \V P_0 - \overline{P}_0 \V_{C^{k+2}} < 1/2$. Then, there exists some $0 <T_1 \le T$ depending only on $k$ and the $C^{k+1} \times C^{k+2}$-norm of $(\overline{g}_0, \overline{P}_0)$ such that \begin{equation*}
\begin{split}
\V g -\overline{g}\V_{C^k} + \V P - \overline{P} \V_{C^{k+1}} <2\left(\V g_0 -\overline{g}_0\V_{C^{k+1}} + \V P_0 - \overline{P}_0 \V_{C^{k+2}}\right).
\end{split}
\end{equation*}
\end{lemma}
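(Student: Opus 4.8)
The plan is to prove this as a standard continuity-in-initial-data estimate for the $1D$ transport system \eqref{BSI1} -- \eqref{BSI2}, using the same Gr\"onwall-type scheme employed in the uniqueness argument of Theorem \ref{thm:LWP2D} and in the a priori estimates of Proposition \ref{prop:LWP1D}. Write $w := g - \overline{g}$ and $Q := P - \overline{P}$. Since $\overline{g}$ is bounded in $C^{k+1}$ and $g$ (for a short enough time) is bounded in $C^{k+1}$ by the a priori estimates underlying Proposition \ref{prop:LWP1D} — here we use the hypothesis $\V g_0 - \overline{g}_0\V_{C^{k+1}} + \V P_0 - \overline{P}_0\V_{C^{k+2}} < 1/2$ to get a uniform bound on the $C^{k+1}\times C^{k+2}$-norm of $(g,P)$ on a common interval $[0,T_1]$ — all the quantities $G', G'', \overline{G}', \overline{G}''$ and their relevant derivatives are controlled on $[0,T_1]$ by a constant depending only on $k$ and $\V(\overline{g}_0,\overline{P}_0)\V_{C^{k+1}\times C^{k+2}}$.

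The key steps, in order, are as follows. First, subtract the two copies of \eqref{BSI1} to get
\[
\pr_t w + 2G\,\pr_\theta w = -2Q_{\text{vel}}\,\pr_\theta\overline{g},
\]
where $Q_{\text{vel}}$ is the stream-function-level difference $G - \overline{G}$, which by the elliptic relation $4G + \pr_{\theta\theta}G = g$ satisfies $\V G - \overline{G}\V_{C^{k+2}} \le C\V w\V_{C^k}$ (inversion of $\pr_{\theta\theta}+4$ on $[-\pi/4,\pi/4]$ with the relevant boundary conditions gains two derivatives). Second, subtract the two copies of the $P'$ equation $\pr_t P' + 2G\pr_\theta P' = G''P - G'P'$ to obtain a transport equation for $Q' = P' - \overline{P}'$ with right-hand side a sum of terms, each of which is a product of a factor bounded in the relevant norm and one of $w$, $Q$, $Q'$, or $G-\overline{G}$. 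Third, differentiate both equations up to order $k$ (for $w$) and order $k+1$ (for $Q$), compose with the flow of the velocity field $2G$, and take $L^\infty$ norms; the commutators between $\pr_\theta^j$ and the transport operator produce only lower-order terms already controlled. This yields a closed differential inequality
\[
\frac{d}{dt}\bigl( \V w\V_{C^k} + \V Q\V_{C^{k+1}} \bigr) \le C\bigl( \V w\V_{C^k} + \V Q\V_{C^{k+1}} \bigr)
\]
on $[0,T_1]$, with $C$ depending only on $k$ and the norm of $(\overline{g}_0,\overline{P}_0)$. Fourth, Gr\"onwall's inequality gives $\V w(t)\V_{C^k} + \V Q(t)\V_{C^{k+1}} \le e^{Ct}\bigl(\V w(0)\V_{C^k} + \V Q(0)\V_{C^{k+1}}\bigr) \le e^{Ct}\bigl(\V g_0-\overline{g}_0\V_{C^{k+1}} + \V P_0-\overline{P}_0\V_{C^{k+2}}\bigr)$, and shrinking $T_1$ so that $e^{CT_1} < 2$ gives the claimed bound.

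The main obstacle — and the reason the statement loses a derivative (comparing in $C^k\times C^{k+1}$ rather than $C^{k+1}\times C^{k+2}$) — is the term $Q_{\text{vel}}\,\pr_\theta\overline{g}$ on the right-hand side of the $w$-equation: to control $\V w\V_{C^k}$ one needs $\V\pr_\theta\overline{g}\V_{C^k}$, i.e. $\overline{g}\in C^{k+1}$, which is exactly why the reference solution is assumed one degree smoother; dually, the right-hand side of the $P$-equation involves $G'' = \pr_{\theta\theta}G$ paired against $P$, so controlling $\V Q\V_{C^{k+1}}$ forces one to use $\V\overline{P}\V_{C^{k+2}}$ and the gain-of-two-derivatives elliptic estimate for $G-\overline{G}$ in terms of $w$. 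Once one is careful to always pair the "difference" unknown at the lower level ($C^k$ or $C^{k+1}$) against the "reference" quantity at the higher level, everything closes with no logarithmic loss — unlike the $2D$ case, there is no borderline elliptic estimate here because on the bounded interval $[-\pi/4,\pi/4]$ the operator $\pr_{\theta\theta}+4$ inverts cleanly. I would not belabor the routine commutator and product-rule bookkeeping; the only genuinely structural point is this matching of regularity levels.
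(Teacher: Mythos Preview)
Your approach is essentially identical to the paper's: subtract the two copies of the system, derive a closed differential inequality for $\V w\V_{C^k} + \V Q\V_{C^{k+1}}$ via transport estimates and the elliptic gain $\V G-\overline{G}\V_{C^{k+2}}\lesssim \V w\V_{C^k}$, then apply Gr\"onwall and shrink $T_1$. One minor slip: your displayed equation for $w$ omits the forcing terms $\sin\theta\, Q + \cos\theta\, Q'$ coming from the right-hand side of \eqref{BSI1}; these must be present but are harmlessly absorbed into the $\V Q\V_{C^{k+1}}$ contribution to your final inequality, so the argument is unaffected.
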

\begin{proof}
We only consider the case $k = 0$, and write $\mu := g - \overline{g}$ as well as $p:= P - \overline{P}$. Then, from \begin{equation*}
\begin{split}
\pr_t \mu + 2G \pr_{\theta}\mu = p\sin\theta + p'\cos\theta - 2(G-\overline{G})\overline{g}'
\end{split}
\end{equation*} (where $G'' + 4G = g$ and $\overline{G}'' + 4\overline{G} = \overline{g}$) one obtains \begin{equation}\label{eq:o_stability}
\begin{split}
\V \mu_t\V_{L^\infty}  \le \V \mu_0\V_{L^\infty} + C(\V\overline{g}_0\V_{C^1}) \int_0^t \left( \V p_s\V_{\mathrm{Lip}} + \V \mu_s\V_{L^\infty} \right) ds.
\end{split}
\end{equation} Similarly, we have \begin{equation*}
\begin{split}
\pr_t p + 2G \partial_{\theta} p = - 2(G - \overline{G})\overline{P}' -(\overline{G} - G)'\overline{P} + \overline{G}'p + (G - \overline{G})'p.
\end{split}
\end{equation*} Working with the above equation for $p$ and that for $p'$, one obtains \begin{equation}\label{eq:p_stability}
\begin{split}
\V p_t\V_{\mathrm{Lip}} \le \V p_0\V_{\mathrm{Lip}} + C(\V\overline{g}_0\V_{C^1},\V \overline{P}_0\V_{C^2}) \int_0^t \V p_s\V_{\mathrm{Lip}} + \V \mu_s\V_{L^\infty} +  \V p_s\V_{\mathrm{Lip}}\V \mu_s\V_{L^\infty}ds.
\end{split}
\end{equation} The inequalities \eqref{eq:o_stability} and \eqref{eq:p_stability} finish the proof.
\end{proof}
\section{Analysis of the $1D$ system}\label{sec:1D}
In this section, we analyze the system solved by scale-invariant solutions to the Boussinesq equation on the interval $[-\pi/4,\pi/4]$:
\begin{empheq}[left=\empheqlbrace]{align} 
&\partial_t g+2G\partial_\theta g =\sin\theta P+\cos\theta\partial_\theta P,\label{BSI1} \\
\label{BSI2} &\partial_t P+2G\partial_\theta P =P\partial_\theta G,
\end{empheq} where $G$ is obtained from $g$ by solving $\partial_{\theta\theta}G + 4G = g$. 
We shall always assume that the initial data $(g_0,P_0)$ is odd and even respectively across zero, which is preserved by the dynamics. 
As it was discussed in the introduction, if one considers zero density initial data $P_0 \equiv 0$, then the system reduces to the $1D$ equation for scale-invariant solutions to $2D$ Euler, which is globally well-posed due to the $L^\infty$ conservation of $g$ (see \cite[Section 3]{EJSI} where a number of properties of this system are investigated). On the other hand, we show in this section that smooth initial data can blow up in finite time for the system \eqref{BSI1} -- \eqref{BSI2}. A few sign assumptions on the initial data are made for our blow-up proof to work, but it is expected that almost all initial data blows up, unless $P_0$ is trivial. 
Let us now state the blow-up result for the $1D$ system. 
\begin{theorem}\label{thm:1Dblowup}
Assume that the initial data $(g_0, P_0) \in C^\infty([-\pi/4,\pi/4]) \times C^\infty([-\pi/4,\pi/4]) $ satisfy \begin{enumerate}
\item $g_0$ is odd and satisfies $g_0, g_0' \ge 0$ on $[0,\pi/4]$. 
\item $P_0$ is even and satisfies $P_0, P_0', P_0 + P_0'' \ge 0$ on $[0,\pi/4]$. 
\item $P_0(\pi/4) > \sqrt{2} P_0(0)$.
\end{enumerate}
Then, the unique smooth solution $(g,P)$ to the system \eqref{BSI1} -- \eqref{BSI2} blows up in finite time. 
\end{theorem}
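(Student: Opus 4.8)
The plan is to run the argument on the maximal interval of existence $[0,T^*)$ of the smooth solution supplied by Proposition~\ref{prop:LWP1D}; by the continuation criterion it then suffices to produce a scalar quantity that escapes to $+\infty$ in finite time while staying controlled by $\int_0^t\V g_s\V_{L^\infty}\,ds$. Two facts about the elliptic relation $\partial_{\theta\theta}G+4G=g$ with $G(\pm\pi/4)=0$ and $G$ odd will be used throughout. (i) Since $G$ vanishes at $\theta=0$ and $\theta=\pm\pi/4$, these are fixed points of the characteristic ODE $\dot\Theta=2G(\Theta)$; hence $[0,\pi/4]$ is invariant and the traces $P(t,0),\ P(t,\pi/4),\ g(t,\pi/4)$ satisfy genuine ODEs, the transport term dropping there. (ii) Solving for odd $G$ amounts to solving on $(0,\pi/4)$ with Dirichlet data at both endpoints, where $-\partial_{\theta\theta}-4$ has first eigenvalue $16-4=12>0$ and hence a positive Green's function; therefore $g\ge 0$ on $[0,\pi/4]$ forces $G\le 0$ there, so $G''=g-4G\ge 0$, i.e.\ $G$ is convex on $[0,\pi/4]$ with $G'(0)\le 0\le G'(\pi/4)$. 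Testing the ODE against the homogeneous solutions $\sin2\theta$ and $\cos2\theta$ also yields the exact identities $G'(t,\pi/4)=\int_0^{\pi/4}g\sin 2\theta\,d\theta$ and $-G'(t,0)=\int_0^{\pi/4}g\cos 2\theta\,d\theta$.

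The second step is to propagate the sign hypotheses. Differentiating \eqref{BSI1}--\eqref{BSI2} in $\theta$ and eliminating $G''$ via $G''=g-4G$, one finds that on $[0,\pi/4]$ the quantities $P,\ P',\ Q:=P+P'',\ g,\ g'$ satisfy
\begin{align*}
\partial_t P+2G\partial_\theta P &= G'P, \\
\partial_t P'+2G\partial_\theta P' &= G''P-G'P', \\
\partial_t Q+2G\partial_\theta Q &= g'P-3G'Q, \\
\partial_t g+2G\partial_\theta g &= \sin\theta\,P+\cos\theta\,P', \\
\partial_t g'+2G\partial_\theta g' &= \cos\theta\,Q-2G'g'.
\end{align*}
Modulo a term linear in the unknown, each right-hand side is a product of the other quantities with nonnegative coefficients on $[0,\pi/4]$ — here the convexity $G''\ge 0$ from (ii) is exactly what makes the forcing of the $P'$-equation nonnegative. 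Reading these equations along characteristics (the endpoints $0,\pi/4$ are themselves characteristics) and running a standard continuity/bootstrap argument shows $P,P',Q,g,g'\ge 0$ on $[0,\pi/4]$ for all $t<T^*$. Moreover, using $G'(0)\le 0$,
\[
\tfrac{d}{dt}\!\left(P(t,\pi/4)-\sqrt2\,P(t,0)\right)=G'(\pi/4)P(\pi/4)-\sqrt2\,G'(0)P(0)\ \ge\ G'(\pi/4)\!\left(P(\pi/4)-\sqrt2\,P(0)\right)\ \ge\ 0,
\]
so hypothesis~(3) is preserved; in particular $P(t,\pi/4)>0$ for all $t<T^*$, which is what makes the Riccati mechanism below nontrivial.

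The third step is to close a Riccati inequality for $b(t):=P(t,\pi/4)>0$ and $\gamma(t):=G'(t,\pi/4)=\int_0^{\pi/4}g\sin 2\theta\,d\theta\ge 0$. By (i) we have the \emph{exact} relation $\dot b=\gamma b$. Differentiating $\gamma$ in time and substituting \eqref{BSI1}, the transport contribution is $-2\int_0^{\pi/4}Gg'\sin 2\theta\,d\theta\ge 0$ (here $G\le 0$, $g'\ge 0$, $\sin2\theta\ge 0$ are precisely the signs just propagated), while the density source contributes, after one integration by parts, $\tfrac{1}{\sqrt2}P(t,\pi/4)-2\int_0^{\pi/4}P\cos 3\theta\,d\theta$; integrating $\int P\cos 3\theta$ by parts once more and using $P\ge 0$, $P'\ge 0$ and $\sin 3\theta\ge 0$ on $[0,\pi/4]$ bounds the remainder by $\tfrac{2}{3\sqrt2}P(t,\pi/4)$, so that $\dot\gamma\ge c_0\,b$ with $c_0=\tfrac{1}{3\sqrt2}>0$. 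Then from $\dot b=\gamma b$ and $\dot\gamma\ge c_0 b$ with $b,\gamma\ge 0$ one gets $\tfrac{d}{dt}(\gamma^2-2c_0 b)=2\gamma(\dot\gamma-c_0 b)\ge 0$, hence $\gamma^2\ge 2c_0 b-C$; since $\dot\gamma\ge c_0 b(0)>0$ makes $b$ unbounded, eventually $\gamma\ge\sqrt{c_0 b}$ and therefore $\dot b\ge\sqrt{c_0}\,b^{3/2}$, forcing $b(t)\to+\infty$ at some finite $T^{**}$. Finally $\gamma\le\tfrac12\V g_t\V_{L^\infty}$ together with $b=P_0(\pi/4)\exp(\int_0^t\gamma)$ shows that finiteness of $\int_0^T\V g_s\V_{L^\infty}\,ds$ would keep $b$ bounded on $[0,T]$; hence $\int_0^{T^{**}}\V g_s\V_{L^\infty}\,ds=+\infty$ and $T^*\le T^{**}<\infty$, which is the asserted blow-up.

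The main obstacle is the differential inequality $\dot\gamma\ge c_0\,b$ with a genuinely positive constant: it requires carrying out the integrations by parts in $\tfrac{d}{dt}\int_0^{\pi/4}g\sin 2\theta\,d\theta$ correctly and checking that every sign produced by the propagation step lines up favorably — in particular that the transport term $-2\int Gg'\sin 2\theta$ is nonnegative and that $\int P\cos 3\theta$, whose weight changes sign on $[0,\pi/4]$, is dominated by the good boundary term $\tfrac{1}{\sqrt2}P(\pi/4)$. A secondary point not to be glossed over is the convexity $G''\ge 0$ underpinning the propagation of $P'\ge 0$; this is where the particular opening angle enters (through the spectral gap of $\partial_{\theta\theta}+4$ acting on odd functions), and it is why this scheme has no counterpart for $2D$ Euler, where $P\equiv 0$ and $g$ is merely transported.
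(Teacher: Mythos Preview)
Your proof is correct, and the positivity propagation step matches the paper's Lemma~\ref{lem:positivity} essentially verbatim. The Riccati step, however, is organized differently and in fact more efficiently.

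The paper tracks the pair $\bigl(\int_0^{\pi/4} g\,d\theta,\ P(\pi/4)\bigr)$. Integrating \eqref{BSI1} over $[0,\pi/4]$ produces the identity \eqref{eq:g_integral}, which contains the two ``bad'' terms $-P(t,0)$ and $G'(\pi/4)^2-G'(0)^2$. The paper disposes of them separately: the first via the preservation of hypothesis~(3), giving $\tfrac1{\sqrt2}P(\pi/4)-P(0)\ge cP(\pi/4)$, and the second via the monotonicity $g'\ge0$ together with the symmetry $\sin(2(\pi/4-\theta))=\cos2\theta$ of the kernel in \eqref{G'}, which yields $|G'(\pi/4)|\ge|G'(0)|$. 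It then closes on the system $A'=cB$, $B'=cAB$ and shows $A\sqrt B$ blows up.

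You instead track $\gamma(t)=G'(t,\pi/4)=\int_0^{\pi/4} g\sin2\theta\,d\theta$ directly. This buys you two things. First, $\dot b=\gamma b$ is an \emph{identity}, so you never need the auxiliary estimate $G'(\pi/4)\ge c\int g$. Second, your computation of $\dot\gamma$ never sees $P(t,0)$: after the integration by parts the density contribution is $\tfrac1{\sqrt2}P(\pi/4)-2\int_0^{\pi/4}P\cos3\theta$, and one more integration by parts together with $P'\ge0$, $\sin3\theta\ge0$ on $[0,\pi/4]$ gives $\dot\gamma\ge\tfrac1{3\sqrt2}\,b$ with an explicit constant. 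As a consequence, your argument actually uses hypothesis~(3) only through its corollary $P_0(\pi/4)>0$; the comparison with $\sqrt2\,P_0(0)$ is superfluous in your route, whereas in the paper it is essential. The trade-off is that the paper's argument generalizes verbatim to the domains $[-L,L]$ for any $L<\pi/2$ (Section~4.3), while your weight $\sin2\theta$ and the trigonometric identities you exploit are tied to $L=\pi/4$; extending your approach to general $L$ would require replacing $\sin3\theta\ge0$ by an analogous sign check that is not immediate.
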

For a simple example of initial data satisfying the assumptions, one may take $g_0 \equiv 0$ and $P_0(\theta) = \theta^2$. The key observations we need to make about this system is that solutions satisfy some miraculous monotonicity properties which seem to be very difficult to deduce in the full two-dimensional system. 
Let us first derive the equations \eqref{BSI1} -- \eqref{BSI2} from the $2D$ Boussinesq system. We make the ansatz that the vorticity and the density are radially homogeneous with degree zero and one, respectively: using polar coordinates,  \begin{equation}\label{eq:ansatz}
\begin{cases}
\omega(r,\theta) = g(\theta),\\
\rho(r,\theta) = r P(\theta). 
\end{cases}
\end{equation} Then, writing the stream function $\Psi$ associated with $\omega$ as $\Psi(r,\theta) = r^2 G(\theta)$, we have \begin{equation}\label{eq:velocity}
\begin{split}
u(r,\theta) = \nabla^\perp\Psi(r,\theta) = 2rG(\theta) \hat{\theta} - r \partial_{\theta} G(\theta) \hat{r}.
\end{split}
\end{equation} Here, \begin{equation*}
\begin{split}
\hat{\theta} = 
\frac{x^\perp}{|x|}, \qquad \hat{r} = \frac{x}{|x|}.
\end{split}
\end{equation*} Moreover, $\Delta \Psi = \omega$ gives that \begin{equation}\label{eq:elliptic}
\begin{split}
\partial_{\theta\theta} G + 4G = g. 
\end{split}
\end{equation} Plugging in relations \eqref{eq:ansatz} and \eqref{eq:velocity} into the $2D$ Boussinesq system in vorticity form results in equations \eqref{BSI1} and \eqref{BSI2}. We have used the formulas \begin{equation*}
\begin{split}
\nabla = \hat{r} \pr_r  + \hat{\theta} \frac{\partial_{\theta}}{r},\qquad \nabla^\perp = -\hat{r}\frac{\pr_{\theta}}{r} + \hat{\theta} \pr_r, \qquad \Delta = \pr_{rr} + \frac{\pr_r}{r} + \frac{\partial_{\theta\theta}}{r^2}.
\end{split}
\end{equation*}
\subsection{Positivity and monotonicity lemmas}
We begin by recalling a result from \cite[Lemma 3.7]{EJSI} which says that the velocity has a sign in the ``odd and positive'' scenario. 
\begin{lemma}[see \cite{EJSI}]\label{lem:PropertiesOfG}
For some $k \ge 0$ and $0 \le \alpha \le 1$, suppose $g$ is $C^{k,\alpha}$ and odd on $[-\pi/4,\pi/4]$, non-negative in $[0,\pi/4]$, and that $G$ is the unique odd $C^{k+2,\alpha}$ solution to the boundary value problem: \begin{equation*}
\begin{split}
\begin{cases}
&G'' + 4G = g \\
&G(\pm\pi/4) = 0.
\end{cases}
\end{split}
\end{equation*} Then, 
\begin{enumerate}
\item $G$ is non-positive on $[0,\pi/4]$. 
\item $G''\geq g$ on $[0,\pi/4]$.
\item We have the following formula for $G'$: \begin{equation}\label{G'} G'(\theta)= \sin(2\theta) \int_0^\theta g(\theta')\sin(2\theta')d\theta' -\cos(2\theta)\int_{\theta}^{\pi/4} g(\theta')\cos(2\theta')d\theta'\end{equation} on $[0,\pi/4]$.
\end{enumerate}
\end{lemma}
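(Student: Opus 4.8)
The plan is to reduce the two-sided boundary value problem to the interval $[0,\pi/4]$ and work with the explicit Green's function. Since $g$ is odd and we take $G$ odd, oddness forces $G(0)=0$, and together with the stated endpoint condition $G(\pi/4)=0$ we see that on $[0,\pi/4]$ the function $G$ solves
$$
G'' + 4G = g \quad\text{on}\quad (0,\pi/4), \qquad G(0)=G(\pi/4)=0.
$$
The homogeneous equation $G''+4G=0$ has fundamental solutions $\sin(2\theta)$ and $\cos(2\theta)$; note that $\sin(2\theta)$ vanishes at $\theta=0$, that $\cos(2\theta)$ vanishes at $\theta=\pi/4$, and that no nontrivial linear combination vanishes at both endpoints (equivalently, $4$ lies strictly below the first Dirichlet eigenvalue $16$ of $-\partial_{\theta\theta}$ on $[0,\pi/4]$), so the boundary value problem is uniquely solvable; its odd extension is then the unique odd solution on $[-\pi/4,\pi/4]$, which is consistent with the hypothesis of the lemma. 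Elliptic regularity (or the explicit integral formula below) gives $G\in C^{k+2,\alpha}$.

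Next I would write down the Green's function $\mathcal G(\theta,\theta')$ for $\partial_{\theta\theta}+4$ on $[0,\pi/4]$ with Dirichlet data. Using the solution $\sin(2\theta)$ (vanishing at $0$) and $\cos(2\theta)$ (vanishing at $\pi/4$), whose Wronskian $\sin(2\theta)(\cos 2\theta)' - \cos(2\theta)(\sin 2\theta)' = -2$, the standard construction yields
$$
\mathcal G(\theta,\theta') = -\tfrac12\,\sin\!\big(2\min\{\theta,\theta'\}\big)\cos\!\big(2\max\{\theta,\theta'\}\big), \qquad G(\theta)=\int_0^{\pi/4}\mathcal G(\theta,\theta')\,g(\theta')\,d\theta'.
$$
On $[0,\pi/4]^2$ the arguments $2\min\{\theta,\theta'\}$ and $2\max\{\theta,\theta'\}$ lie in $[0,\pi/2]$, so $\sin(2\min\{\theta,\theta'\})\ge 0$ and $\cos(2\max\{\theta,\theta'\})\ge 0$, hence $\mathcal G\le 0$. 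Combined with $g\ge 0$ on $[0,\pi/4]$ this gives $G\le 0$ on $[0,\pi/4]$, which is item (1). Item (2) is then immediate from the equation: $G'' - g = -4G \ge 0$ on $[0,\pi/4]$. For item (3), I would expand the representation as $G(\theta) = -\tfrac12\cos(2\theta)\int_0^\theta \sin(2\theta')g(\theta')\,d\theta' - \tfrac12\sin(2\theta)\int_\theta^{\pi/4}\cos(2\theta')g(\theta')\,d\theta'$ and differentiate; the two boundary contributions produced by Leibniz's rule are $\pm\tfrac12\sin(2\theta)\cos(2\theta)g(\theta)$ and cancel, leaving exactly the stated formula for $G'$.

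The computation is elementary and I expect no serious obstacle. The one point deserving care is the borderline nature of the operator: on $[-\pi/4,\pi/4]$ the operator $\partial_{\theta\theta}+4$ with Dirichlet data is \emph{exactly} resonant, with kernel spanned by the even function $\cos(2\theta)$, so the oddness hypothesis on $g$ and the specific aperture $\pi/2$ are both essential for solvability; and one must verify that the explicit Green's function genuinely carries a sign, which is precisely where the restriction to $[0,\pi/4]$ (on which $2\theta$ sweeps $[0,\pi/2]$, keeping both $\sin$ and $\cos$ nonnegative) enters. Everything else is bookkeeping.
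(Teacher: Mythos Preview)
Your proof is correct and follows essentially the same route as the paper: both write down the explicit Green's function for $\partial_{\theta\theta}+4$ with Dirichlet data on $[0,\pi/4]$, observe that it is non-positive there (yielding (1) and hence (2)), and differentiate the integral representation to obtain (3). The only cosmetic difference is that the paper presents the kernel in the odd-reflection form $\tfrac14\big(|\sin(2\theta-2\theta')|-|\sin(2\theta+2\theta')|\big)$, which on $[0,\pi/4]^2$ coincides with your $-\tfrac12\sin(2\min\{\theta,\theta'\})\cos(2\max\{\theta,\theta'\})$; your remark about the resonance on the full interval $[-\pi/4,\pi/4]$ and the role of oddness is a nice addition that the paper leaves implicit.
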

\begin{proof}
Under the odd assumption in $g$, it is straightforward to show that the unique odd solution is given by \begin{equation}\label{eq:kernel_G}
\begin{split}
G(\theta) &= \frac{1}{4} \int_0^{\pi/4} \left( |\sin(2\theta - 2\theta')| - |\sin(2\theta+2\theta')| \right)g(\theta')d\theta'  ,\qquad \theta \in [0,\pi/4].
\end{split}
\end{equation} The kernel is non-positive for $\theta,\theta' \in [0,\pi/4]$, and it follows that $G \le 0$. Hence $G'' = g - 4G \ge 0$. The explicit representation formula \eqref{G'} for $G'$ can be obtained by directly differentiating \eqref{eq:kernel_G}. 
\end{proof}
\begin{lemma}\label{lem:positivity}
Let $g, P \in C^3([0,T]\times [-\pi/4,\pi/4])$ be a solution pair to the system \eqref{BSI1} -- \eqref{BSI2}. Assume that initially we have \begin{equation*}
\begin{split}
g_0, \quad \partial_{\theta} g_0 \ge 0 \qquad \mbox{and}\qquad P_0, \quad \partial_{\theta} P_0, \quad P_0  + \partial_{\theta\theta}^2 P_0 \ge 0.
\end{split}
\end{equation*} Then the solution keeps these signs for all $t \in [0,T]$; that is, \begin{equation*}
\begin{split}
g, \quad \partial_{\theta} g \ge 0 \qquad \mbox{and}\qquad P, \quad \partial_{\theta} P, \quad P  + \partial_{\theta\theta}^2 P \ge 0.
\end{split}
\end{equation*}
\end{lemma}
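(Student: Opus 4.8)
The plan is to prove propagation of all five sign conditions simultaneously by a continuity/bootstrap argument, working along the characteristics of the transport operator $\partial_t + 2G\partial_\theta$. Because $g_0$ is odd and $P_0$ is even and these symmetries persist, it suffices to work on $[0,\pi/4]$; the key structural input is Lemma \ref{lem:PropertiesOfG}, which tells us that as long as $g\ge 0$ on $[0,\pi/4]$ we have $G\le 0$ there, $G''\ge g\ge 0$, and hence (since $G$ is odd and vanishes at the endpoints) $G'\ge 0$ on $[0,\pi/4]$ with $G(0)=0$, $G(\pi/4)=0$. In particular the flow $\Theta_t$ of $\dot\theta = 2G(\theta)$ maps $[0,\pi/4]$ into itself and fixes both endpoints. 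I would set up a (short) time interval on which $g\ge 0$ is known to hold and show that on that interval all the inequalities close, then argue the set of times where everything holds is open, closed, and nonempty in $[0,T]$.

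The mechanism is the following chain of differentiated equations, each of which I would derive by differentiating \eqref{BSI1}--\eqref{BSI2} in $\theta$ and using \eqref{eq:elliptic}. Write $q := P + \partial_{\theta\theta}P$ (so $q = 5P - 4G''\cdot(\ldots)$ — more simply, $q$ is what appears after one applies $\partial_{\theta\theta}+1$). First, from \eqref{BSI2},
\begin{equation*}
\partial_t P + 2G\partial_\theta P = P\,\partial_\theta G,
\end{equation*}
so along characteristics $\tfrac{d}{dt}(P\circ\Theta_t) = (P\,G')\circ\Theta_t$, and since $G'\ge 0$, $P_0\ge 0$ forces $P\ge 0$ for all later times (Grönwall/exponential factor). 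Differentiating once more,
\begin{equation*}
\partial_t(\partial_\theta P) + 2G\,\partial_\theta(\partial_\theta P) = \partial_\theta P\,(G' - 2G'') + \ldots
\end{equation*}
wait — carefully: $\partial_\theta(2G\partial_\theta P) = 2G'\partial_\theta P + 2G\partial_{\theta\theta}P$ and $\partial_\theta(P G') = \partial_\theta P\,G' + P\,G''$, so
\begin{equation*}
\partial_t(\partial_\theta P) + 2G\,\partial_{\theta}(\partial_\theta P) = -2G'\,\partial_\theta P + G'\,\partial_\theta P + P\,G'' = -G'\,\partial_\theta P + P\,G''.
\end{equation*}
Now $G''\ge g \ge 0$ (Lemma \ref{lem:PropertiesOfG}(2), valid once $g\ge0$) and we have just shown $P\ge 0$, so the forcing term $P\,G''\ge 0$, and the coefficient $-G'$ on $\partial_\theta P$ is harmless for positivity (Grönwall); hence $\partial_\theta P_0\ge 0\Rightarrow \partial_\theta P\ge 0$. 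The same scheme applied to $q = P + \partial_{\theta\theta}P$: one computes the evolution of $\partial_{\theta\theta}P$, adds the $P$-equation, and — using $\partial_{\theta\theta}G = g - 4G$ to re-express third and fourth $\theta$-derivatives of $G$ in terms of $g,\partial_\theta g,G$ — one should obtain a transport equation for $q$ of the form $\partial_t q + 2G\partial_\theta q = (\text{bounded})\,q + (\text{nonnegative forcing built from }g,\partial_\theta g, P, \partial_\theta P)$, whence $q_0\ge0\Rightarrow q\ge0$. Finally for $g$ and $\partial_\theta g$: \eqref{BSI1} gives $\partial_t g + 2G\partial_\theta g = P\sin\theta + \cos\theta\,\partial_\theta P$; on $[0,\pi/4]$ both $\sin\theta,\cos\theta\ge 0$ and we have $P\ge0$, $\partial_\theta P\ge0$, so the forcing is $\ge0$ and $g_0\ge0\Rightarrow g\ge0$ — which is exactly what was needed to keep the whole loop running. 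Then differentiating \eqref{BSI1} in $\theta$,
\begin{equation*}
\partial_t(\partial_\theta g) + 2G\,\partial_\theta(\partial_\theta g) = -2G'\,\partial_\theta g + P\cos\theta + \cos\theta\,\partial_{\theta\theta}P - \sin\theta\,\partial_\theta P + \sin\theta\,\partial_\theta P
\end{equation*}
— recompute: $\partial_\theta(P\sin\theta + \cos\theta\,\partial_\theta P) = \partial_\theta P\sin\theta + P\cos\theta - \sin\theta\,\partial_\theta P + \cos\theta\,\partial_{\theta\theta}P = P\cos\theta + \cos\theta\,\partial_{\theta\theta}P = \cos\theta\,(P + \partial_{\theta\theta}P) = \cos\theta\, q$. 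So
\begin{equation*}
\partial_t(\partial_\theta g) + 2G\,\partial_\theta(\partial_\theta g) = -2G'\,\partial_\theta g + q\cos\theta,
\end{equation*}
and since $q\ge0$, $\cos\theta\ge0$ on $[0,\pi/4]$, and $-2G'$ is a bounded coefficient, $\partial_\theta g_0\ge0\Rightarrow\partial_\theta g\ge0$. This closes all five inequalities.

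To make the bootstrap rigorous I would define $T^\star := \sup\{\,t\in[0,T] : g,\partial_\theta g\ge0,\ P,\partial_\theta P, q\ge0 \text{ on }[0,t]\times[0,\pi/4]\,\}$, note $T^\star>0$ by continuity and the strict-ish room at $t=0$ (or simply by a direct short-time estimate), and then use the transport representations above — each of the form $\frac{d}{dt}(\phi\circ\Theta_t) = a(t,\cdot)\,(\phi\circ\Theta_t) + f(t,\cdot)$ with $a$ bounded (controlled by $\|g\|_{L^\infty}$, which is finite on $[0,T]$ by the $C^3$ assumption) and $f\ge0$ — to conclude each $\phi\in\{g,\partial_\theta g,P,\partial_\theta P,q\}$ stays $\ge0$ up to and including $T^\star$, forcing $T^\star = T$. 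One mild technical point is the behaviour at the endpoints $\theta=0,\pi/4$: since $\Theta_t$ fixes both, characteristics starting in the interior stay in the interior and boundary values are propagated by their own ODEs (e.g. $g$ and $\partial_\theta g$ at the endpoints), so no boundary flux can spoil positivity — I would remark on this rather than belabor it.

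The main obstacle I anticipate is purely computational: deriving the evolution equation for $q = P + \partial_{\theta\theta}P$ in a form that manifestly exhibits a nonnegative forcing term. This requires differentiating \eqref{BSI2} twice in $\theta$, repeatedly substituting $\partial_{\theta\theta}G = g - 4G$ (and its $\theta$-derivative $\partial_{\theta\theta\theta}G = \partial_\theta g - 4G'$) to eliminate the higher $G$-derivatives, and then verifying that after cancellation the non-transport, non-linear-in-$q$ part of the right-hand side is a nonnegative combination of $g,\partial_\theta g, P,\partial_\theta P$ on $[0,\pi/4]$ — this is where the specific structure of the Boussinesq nonlinearity, and the sign information already accumulated ($G\le0$, $G'\ge0$, $G''\ge g\ge0$, $P,\partial_\theta P\ge0$), must all conspire. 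Everything else (the $P$, $\partial_\theta P$, $g$, $\partial_\theta g$ equations above) is short and already essentially written out.
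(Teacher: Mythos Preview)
Your overall strategy---write each of the five quantities as a transport equation along the flow of $2G$, identify a nonnegative forcing, and bootstrap---is the same as the paper's, and your derivations of the $\partial_\theta P$ and $\partial_\theta g$ equations are correct (in particular, the forcing $\cos\theta\,(P+\partial_{\theta\theta}P)$ in the $\partial_\theta g$ equation is exactly what the paper finds). Two points, however, separate your sketch from a complete proof.

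First, a minor error: the claim ``$G'\ge 0$ on $[0,\pi/4]$'' is false. From \eqref{G'} one has $G'(0)=-\int_0^{\pi/4}g\cos(2\theta)\,d\theta\le 0$ and $G'(\pi/4)\ge 0$, so $G'$ changes sign. Fortunately you do not actually need this: the $P$-equation along characteristics reads $\frac{d}{dt}(P\circ\Theta_t)=(G'\,P)\circ\Theta_t$, and a bounded $G'$ of either sign already gives $P\ge 0$ via the exponential factor.

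Second, and more substantively, the bootstrap you describe does not close without an additional device, precisely at the step where $g\ge 0$ and $\partial_\theta P\ge 0$ must be propagated together. Your $\partial_\theta P$ equation has forcing $P\,G''$, and $G''\ge 0$ requires $g\ge 0$; your $g$ equation has forcing $P\sin\theta+\cos\theta\,\partial_\theta P$, which requires $\partial_\theta P\ge 0$. The dependence $g\mapsto G''$ is \emph{nonlocal}, so this is not a finite cooperative ODE system, and the standard ``define $T^\star$ and show it equals $T$'' argument does not by itself explain why no quantity can dip below zero just past $T^\star$. The paper flags exactly this: ``it is not clear how to use a pure ODE argument to propagate positivity of $g$ without an equation on $\partial_{\theta\theta}G$.'' Its remedy is an $\epsilon$-perturbation---assume $g_0/\sin\theta,\ \partial_\theta P_0/\sin\theta\ge\epsilon$, propagate a strict lower bound $\ge\epsilon\,e^{-Ct}$ by a short-time continuity plus continuation argument, and then send $\epsilon\to 0$ using the $C^1\times C^2$ stability of the solution map (Lemma~\ref{lem:stability}). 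You should incorporate this step.

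For the computation you flagged as the main obstacle: differentiating the $P$-equation twice and using $G'''=\partial_\theta g-4G'$ gives
\[
\partial_t(P+\partial_{\theta\theta}P)+2G\,\partial_\theta(P+\partial_{\theta\theta}P)=P\,\partial_\theta g-3G'(P+\partial_{\theta\theta}P).
\]
So the forcing in the $q$-equation is $P\,\partial_\theta g$, which needs $\partial_\theta g\ge 0$---the very quantity whose positivity you were going to deduce \emph{from} $q\ge 0$. The resolution (and this is what the paper does in its Step~3) is to treat $A:=\partial_\theta g$ and $B:=P+\partial_{\theta\theta}P$ as a coupled $2\times 2$ system along characteristics,
\[
\dot A=-2G'A+\cos\theta\,B,\qquad \dot B=P\,A-3G'B,
\]
with off-diagonal coefficients $\cos\theta\ge 0$ and $P\ge 0$; this is cooperative and preserves the positive cone. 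So your sequential ordering ($q$ before $\partial_\theta g$) should be replaced by this joint argument.
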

\begin{proof}
We break up the proof into three steps: 

\vspace{3mm}

\emph{1. Positivity of $P$}

\vspace{3mm}

\noindent The fact that $P\geq 0$ on $[0,T]$ follows from the following lower bound on $P$ which can be easily seen from \eqref{BSI2}:
$$\inf P(t) \geq \inf P_0 \cdot \exp\left(-\int_0^t |\partial_\theta G|_{L^\infty}(s)ds\right) \geq 0,$$ since $G$ is assumed to be $C^3$ and since $\inf P_0\geq 0.$ 

\vspace{3mm}

\emph{2. Positivity of $\partial_\theta P$ and $g$}

\vspace{3mm}

\noindent Next, we analyze the equations for $\partial_\theta P$ and $g$ simultaneously: \begin{equation*}
\begin{split}
\begin{cases}
&\partial_t g +2G\partial_\theta g =\sin(\theta) P+ \cos(\theta) \partial_\theta P \\
&\partial_t \partial_\theta P +2G \partial_\theta \partial_\theta P=P\partial_{\theta\theta} G-\partial_\theta P\partial_\theta G
\end{cases}
\end{split}
\end{equation*}
Now we rewrite the equations along the characteristics $X(t,\cdot)$ associated to $2G$ to get rid of the transport term:
\begin{equation*}
\begin{split}
\begin{cases}
& \partial_t(g\circ X)=\sin(X)P\circ X + \cos(X) \partial_\theta P\circ X \\
& \partial_t (\partial_\theta P\circ X)=P\circ X \partial_{\theta\theta} G \circ X-\partial_\theta P\circ X\partial_\theta G\circ X.
\end{cases}
\end{split}
\end{equation*}
Now recall from Lemma \ref{lem:PropertiesOfG} that $\partial_{\theta\theta}G\geq g$ so long as $g\geq 0$. Unfortunately, it is not clear how to use a pure ODE argument to propagate positivity of $g$ without an equation on $\partial_{\theta\theta}G$. Hence, one can make the following further assumption on $g$ and $\partial_\theta P$ at $t = 0$: 
$$\frac{g_0(\theta)}{\sin(\theta)},\quad \frac{\partial_\theta P_0(\theta)}{\sin(\theta)}\geq \epsilon.$$ We will eventually get rid of this assumption by sending $\epsilon\rightarrow 0$. Considering the equation for $\frac{g}{\sin(X^{-1})}$ and $\frac{\partial_\theta P}{\sin(X^{-1})}$ we see that \begin{equation}\label{eq:goversin}
\begin{split}
& \frac{d}{dt} \left( \frac{g}{\sin(X^{-1})} \right) \circ X = \frac{\sin(X)}{\sin\theta} \cdot P\circ X + \cos(X) \cdot \left( \frac{P'}{\sin(X^{-1})} \right) \circ X
\end{split}
\end{equation} and \begin{equation}\label{eq:Pprimeoversin}
\begin{split}
& \frac{d}{dt} \left( \frac{P'}{\sin(X^{-1})} \right) \circ X = P\circ X \cdot  \frac{G''(X)}{\sin\theta} - G'\circ X \cdot  \left( \frac{P'}{\sin(X^{-1})} \right) \circ X
\end{split}
\end{equation} holds. Here, we have used the fact that the velocity at $\theta = 0$ is always zero by symmetry, and therefore $X(t,0) = 0$. Since the solution is smooth in space and time, there exists some small $0 < t^* \le T$ (depending only on $C^1$ norm of $g$ and $P$) so that \begin{equation*}
\begin{split}
\frac{g(t,\theta)}{\sin(X^{-1}_t(\theta))}, \quad \frac{P'(t,\theta)}{\sin(X^{-1}_t(\theta))} \ge \frac{\epsilon}{2}, \qquad t \in [0,t^*]. 
\end{split}
\end{equation*} Then, on this time interval, we have $G'' \ge 0$, and since $P \ge 0$, from equations \eqref{eq:goversin} and \eqref{eq:Pprimeoversin}, it follows that on the same time interval $[0,t^*]$, we indeed have \begin{equation}\label{eq:positivity_lower_bound}
\begin{split}
\frac{g}{\sin(X^{-1})}, \quad \frac{P'}{\sin(X^{-1})} \ge \epsilon \exp\left( -Ct \right),
\end{split}
\end{equation} 
for some $C$ depending on $P$ and $g$ in the $C^1$ norm. Using a continuation argument, one can actually show that, \eqref{eq:positivity_lower_bound} is valid on the full time interval $[0,T]$; that is, we split the interval $[0,T]$ into $\cup_{k=0}^{K} \left([kt^*/2,(k+1)t^*/2] \cap [0,T]\right) $ and then show \eqref{eq:positivity_lower_bound} inductively in $k$. 
Now we may send $\epsilon\rightarrow 0$ to recover the general case, using continuity of the solution operator $(g_0,P_0) \mapsto (g_t,P_t)$ in the $C^1 \times C^2$-topology for smooth ($C^3$ is enough) solutions (see Lemma \ref{lem:stability}). This establishes that $P', g \ge 0$. 

\vspace{3mm}

\emph{3. Positivity of $P + P''$ and $g'.$}

\vspace{3mm}

\noindent A direct computation gives us that $g'$ and $ P +P''$ satisfy the following system: 
\begin{equation}\label{eq:P''andg'}
\begin{split}
\begin{cases}
&\pr_t g' + 2G\pr_{\theta}g' = - 2G'g' + \cos\theta (P + P'')\\
&\pr_t (P + P'') + 2G\pr_{\theta}(P+P'') = Pg' - 3G'(P + P''). 
\end{cases}
\end{split}
\end{equation} We have used that $g' = G''' + 4G'$. Now we write $A:=g'\circ X$ and $B:=(P+P'')\circ X$ and we see: \begin{equation*}
\begin{split}
\begin{cases}
&\pr_t A = -2G'(X) A + \cos(X)B,\\
&\pr_t B = P(X) A - 3G'(X) B. 
\end{cases}
\end{split}
\end{equation*} This ODE system clearly shows that $A$ and $B$ remains non-negative if initially so. This finishes the proof. \end{proof}
\begin{remark}
If one replaces the assumption that $P_0 + P_0'' \ge 0$ with $P_0'' \ge 0$ while keeping the others in Lemma \ref{lem:positivity}, it is not true in general that $P ''$ stays non-negative for $t > 0$. Hence one really needs to work with $P + P'' \ge 0$, which is sufficient to keep $g' \ge 0$. 
\end{remark}
\subsection{Proof of finite-time blow up}
Now we are in a position to complete the proof of Theorem \ref{thm:1Dblowup}. 
\begin{proof}[Proof of Theorem \ref{thm:1Dblowup}]
Let $(g_0,P_0)$ be a pair of initial data satisfying the assumptions of Theorem \ref{thm:1Dblowup}. For the sake of contradiction, assume that the solution remains smooth for arbitrarily large $T > 0$. We first note that from Lemma \ref{lem:positivity}, $g, g' \ge 0$ and $P, P', P + P'' \ge 0$ for all time $t \ge 0$ and $\theta \in [0,\pi/4]$. Also recall that we always have $g$ odd and $P$ even.
Integrating the equation for $g$ \eqref{BSI1}, we obtain \begin{equation}\label{eq:g_integral}
\begin{split}
\frac{d}{dt} \int_0^{\pi/4} g(t,\theta)d\theta = 2 \int_{0}^{\pi/4} \sin\theta P(t,\theta)d\theta + \frac{1}{\sqrt{2}}P(t,\pi/4) - P(t,0) + G'(t,\pi/4)^2 - G'(t,0)^2. 
\end{split}
\end{equation} On the other hand, evaluating \eqref{BSI2} at $\pi/4$ and $0$ gives \begin{equation*}
\begin{split}
\frac{d}{dt} P(t,\pi/4) = G'(t,\pi/4)P(t,\pi/4),\qquad \frac{d}{dt} P(t,0) = G'(t,0)P(t,0). 
\end{split}
\end{equation*} The formula \eqref{G'} from Lemma \ref{lem:PropertiesOfG} at $\pi/4$ and $0$ gives respectively, \begin{equation*}
\begin{split}
G'(t,\pi/4) = \int_0^{\pi/4} g(t,\theta) \sin(2\theta)d\theta, \qquad G'(t,0) = -\int_0^{\pi/4} g(t,\theta)\cos(2\theta)d\theta. 
\end{split}
\end{equation*} In particular, since $g \ge 0$, $G'(t,\pi/4) \ge 0 \ge G'(t,0)$, and $P(t,\pi/4) \ge P_0(\pi/4) > \sqrt{2}P_0(0) \ge \sqrt{2}P(t,0)$. Next, since $\sin(2(\pi/4-\theta)) = \cos(2\theta)$ and $g$ is monotonically increasing in $[0,\pi/4]$, we have that \begin{equation*}
\begin{split}
|G'(t,\pi/4)| \ge |G'(t,0)|, 
\end{split}
\end{equation*} as well as \begin{equation}\label{eq:G'versusg}
\begin{split}
G'(t,\pi/4) \ge c \int_0^{\pi/4} g(t,\theta)d\theta,
\end{split}
\end{equation} because at least half of the mass of $g$ is present in $[\pi/8,\pi/4]$, where $\sin(2\theta) \ge 1/\sqrt{2}$. 
Returning to \eqref{eq:g_integral}, we now see that \begin{equation}\label{eq:g_ODE}
\begin{split}
\frac{d}{dt} \int_0^{\pi/4} g(t,\theta)d\theta \ge \frac{1}{\sqrt{2}} P(t,\pi/4) - P_0(0) \ge c P(t,\pi/4) ,
\end{split}
\end{equation} for some absolute constant $c > 0$ depending only on $P_0$. Then, now using \eqref{eq:G'versusg}, we see that \begin{equation}\label{eq:P_ODE}
\begin{split}
\frac{d}{dt} P(t,\pi/4) \ge c P(t,\pi/4) \cdot \int_0^{\pi/4} g(t,\theta)d\theta
\end{split}
\end{equation} holds. Given inequalities \eqref{eq:g_ODE} and \eqref{eq:P_ODE}, we may finish the proof as follows: consider the ODE system \begin{equation}\label{eq:ODE}
\begin{split}
\frac{d}{dt} A(t) = cB(t),\qquad \frac{d}{dt} B(t) = cB(t)A(t),
\end{split}
\end{equation} where $A(0) := \int_0^{\pi/4} g_0(\theta)d\theta > 0$ and $B(0) := P_0(\pi/4) > 0$. Then, by defining $C(t) := B(t)^{1/2}$, we have \begin{equation*}
\begin{split}
\frac{d}{dt} A(t) = c C(t)^2, \qquad \frac{d}{dt} C(t) = \frac{c}{2}C(t)A(t).
\end{split}
\end{equation*} Then, \begin{equation*}
\begin{split}
\frac{d}{dt}\left( A(t)C(t) \right) = cC(t)^3 + \frac{c}{2} C(t)A(t)^2 = \left( A(t)C(t) \right)^{3/2} \left( c \cdot \frac{C(t)^{3/2}}{A(t)^{3/2}} + \frac{c}{2} \cdot \frac{A(t)^{1/2}}{C(t)^{1/2}}  \right).
\end{split}
\end{equation*} Since \begin{equation*}
\begin{split}
c \cdot \frac{C(t)^{3/2}}{A(t)^{3/2}} + \frac{c}{2} \cdot \frac{A(t)^{1/2}}{C(t)^{1/2}} \ge c'
\end{split}
\end{equation*} for some absolute constant $c' > 0$, $A(t)C(t)$ must become infinite in finite time. By a comparison argument, it shows that \begin{equation*}
\begin{split}
\int_0^{\pi/4} g(t,\theta)d\theta \cdot (P(t,\pi/4))^{1/2}
\end{split}
\end{equation*} becomes infinite in finite time as well. This is a contradiction to the assumption that the solution is global in time.
\end{proof}
\subsection{Blow-up for the $1D$ system in $[-L,L]$ for any $L<\pi/2$.}
Now we explain how the previous proof can be modified to give blow-up when the angle of the corner is larger than $\pi/4$. In fact, the exact same proof works except that we need to establish a few lemmas which we used in the case $L=\pi/4.$ Upon inspecting the proof, we see that all we need to establish which doesn't carry over word for word are the following statements:
\begin{enumerate}
\item If $g\geq 0$, then $G''\geq g$.
\item If $g,g'\geq 0$ then $|G'(L)|\geq |G'(0)|$.
\item If $g,g'\geq 0$ then $|G'(L)|\geq c|g|_{L^1}$ for some small constant $c>0$. 
\end{enumerate}
In the case $L=\pi/4$, we had formula \eqref{G'} which gave us $G$ from $g$, and we used this formula to establish the above statements. Here we show how to establish these statements without a solution formula, and the advantage is that it allows us to handle the case of general $L<\pi/2$.  
First notice that there is a unique odd $G\in H^2([-L,L])$ solving the elliptic boundary value problem \begin{equation}\label{ellipticproblem} 4G+G''=g, \qquad G(-L)=G(L)=0\end{equation}  whenever $g\in L^2([-L,L])$ is odd and $0<L< {\pi}/{2}.$ This follows from the (sharp) Poincar\'e inequality and the standard variational technique.  Note also the following simple lemmas:
\begin{lemma}
Suppose $g\geq 0$ is continuous on $[0,L]$ and $G$ is the unique solution to \eqref{ellipticproblem} on $[0,L]$. Then, $G''$ is continuous, $G\leq 0$ on $[0,L]$, and consequently $G''\geq g$ on $[0,L].$
\end{lemma}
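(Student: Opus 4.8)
The plan is to prove the three asserted conclusions in the order: (i) regularity of $G''$, (ii) the sign $G\le 0$, and (iii) the pointwise bound $G''\ge g$, with (ii) being the only substantive point.

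For (i) I would run a one-step elliptic bootstrap. Since $g\in L^2([-L,L])$ is odd, the variational construction mentioned above produces $G\in H^2([-L,L])$; in one dimension $H^2\hookrightarrow C^1$, so $G$ is continuous, hence $g-4G$ is continuous, and since $G''=g-4G$ holds in $L^2$, we conclude $G\in C^2$ with $G''$ continuous on $[-L,L]$, in particular on $[0,L]$. On $[0,L]$ the oddness of $G$ gives $G(0)=0=G(L)$, so that $G$ solves the Dirichlet problem $G''+4G=g$, $G(0)=G(L)=0$.

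The heart of the lemma is (ii), and the obstruction is exactly that the zeroth-order coefficient $+4$ in $\partial_{\theta\theta}+4$ has the ``wrong sign'' for the naive maximum principle; this is where the hypothesis $L<\pi/2$ must be used, through the sharp Poincar\'e inequality $\int_0^L|v'|^2\ge(\pi/L)^2\int_0^L v^2>4\int_0^L v^2$ for $0\ne v\in H^1_0([0,L])$. I would resolve this by a ground-state substitution: fix any $\mu$ with $4<\mu<(\pi/L)^2$ and set $\varphi(x):=\cos\big(\sqrt{\mu}\,(x-L/2)\big)$, which is strictly positive on $[0,L]$ because $\sqrt{\mu}\,|x-L/2|\le\sqrt{\mu}\,L/2<\pi/2$. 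Writing $G=\varphi\psi$ with $\psi=G/\varphi\in C^2([0,L])$ and using $\varphi''=-\mu\varphi$, the equation becomes
\[
\psi'' + 2\frac{\varphi'}{\varphi}\psi' - (\mu-4)\psi = \frac{g}{\varphi}\ \ge\ 0 \quad\text{on } [0,L],
\]
whose zeroth-order coefficient $-(\mu-4)$ is now negative, while the coefficients are continuous on $[0,L]$ ($\varphi'/\varphi$ being smooth and bounded since $\varphi$ is bounded below). Since $\psi(0)=\psi(L)=0$, the weak maximum principle for operators with non-positive zeroth-order coefficient forces $\psi\le 0$, hence $G=\varphi\psi\le 0$. (Alternatively, one can argue variationally: $G$ is the unique minimizer of the strictly convex functional $J(v)=\tfrac12\int_0^L(|v'|^2-4v^2)\,dx+\int_0^L g v\,dx$ over $H^1_0([0,L])$, and comparing $J(G)$ with $J(\min(G,0))$ — using $g\ge 0$ and positivity of the quadratic form on $\max(G,0)\in H^1_0([0,L])$ — shows $\max(G,0)\equiv 0$.)

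Finally, (iii) is immediate once $G\le 0$ is known: $G''=g-4G\ge g$ on $[0,L]$. The only step needing care is the sign of $G$; I would make sure to flag explicitly where $L<\pi/2$ enters, since this is precisely the point that fails for obtuse sectors and the reason the argument is deliberately written without the explicit solution formula of Lemma \ref{lem:PropertiesOfG}.
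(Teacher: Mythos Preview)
Your argument is correct, but your treatment of the key sign step~(ii) differs from the paper's. The paper argues by contradiction using the Poincar\'e inequality directly: if $G>0$ somewhere, pass to a maximal subinterval $(a,b)\subset[0,L]$ on which $G\ge 0$ with $G(a)=G(b)=0$, multiply the equation by $G$ and integrate by parts to get $\int_a^b 4G^2-\int_a^b G'^2=\int_a^b gG\ge 0$, which contradicts the sharp Poincar\'e inequality on an interval of length $\le L<\pi/2$. Your ground-state substitution $G=\varphi\psi$ with $\varphi=\cos(\sqrt{\mu}(\cdot-L/2))$, $4<\mu<(\pi/L)^2$, instead converts the problem into one where the classical maximum principle applies directly. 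Both arguments hinge on the same inequality $4<(\pi/L)^2$; the paper's route is shorter and more elementary in this constant-coefficient setting, while your factorization trick is the standard device for recovering a maximum principle when the zeroth-order term has the wrong sign but the principal eigenvalue is still positive, and it would generalize more readily to variable coefficients or higher dimensions. Your variational alternative (comparing $J(G)$ with $J(\min(G,0))$) is in fact closer in spirit to the paper's proof, since both exploit coercivity of the bilinear form on $H^1_0$.
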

\begin{proof}Continuity follows from the fact that $G\in H^2([0,L])$ and Sobolev embedding. 
Now suppose $G>0$ somewhere on $[0,L]$, then by passing to a slightly smaller interval, we can assume that $G\geq 0$ on $[0,L]$. Next, we multiply \eqref{ellipticproblem} by $G$ and integrate to see:
$$\int_0^L 4G^2 d\theta -\int_0^L G'^2 d\theta \geq 0,$$ which contradicts the sharp Poincar\'e inequality: $$\int_0^L G^2 d\theta \leq \frac{L^2}{\pi^2}\int_0^L G'^2 d\theta,$$ since $L<{\pi}/{2}$. 
\end{proof}
\begin{lemma}
If $g,g'\geq 0$, then $-G'(0),G'(L), G'(0)+G'(L)\geq 0$. 
\end{lemma}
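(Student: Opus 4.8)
The plan is to deduce everything from the preceding lemma, which already tells us that $g\ge 0$ forces $G\le 0$ and $G''\ge g\ge 0$ on $[0,L]$. Thus $G$ is convex on $[0,L]$ and vanishes at both endpoints, so $G'$ is nondecreasing with $G'(0)\le (G(L)-G(0))/L=0\le G'(L)$. This gives the first two assertions $-G'(0)\ge 0$ and $G'(L)\ge 0$ at once, and I note that this part uses only $g\ge 0$, not $g'\ge 0$.

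For the one substantive inequality, $G'(0)+G'(L)\ge 0$, I would introduce the reflected combination
\[
\psi(\theta):=G(\theta)-G(L-\theta),\qquad \theta\in[0,L].
\]
Since $G$ vanishes at both endpoints, $\psi(0)=G(0)-G(L)=0$, and trivially $\psi(L/2)=0$. A direct computation gives $\psi'(\theta)=G'(\theta)+G'(L-\theta)$, so $\psi'(0)=G'(0)+G'(L)$ is exactly the quantity we want to sign. Moreover $\psi$ solves $\psi''+4\psi=g(\theta)-g(L-\theta)$, and here the hypothesis $g'\ge 0$ enters: for $\theta\in[0,L/2]$ we have $\theta\le L-\theta$, hence $g(\theta)\le g(L-\theta)$, so $\psi''+4\psi\le 0$ on $[0,L/2]$.

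It then remains to run the same maximum-principle argument as in the previous lemma, now on the shorter interval $[0,L/2]$. If $\psi$ were negative somewhere in $(0,L/2)$, I would pick a maximal subinterval $(a,b)\subseteq[0,L/2]$ on which $\psi<0$, so that $\psi(a)=\psi(b)=0$; multiplying $\psi''+4\psi\le 0$ by $\psi\le 0$ and integrating by parts over $(a,b)$ yields $\int_a^b(\psi')^2\le 4\int_a^b\psi^2$, which contradicts the sharp Poincar\'e inequality $\int_a^b(\psi')^2\ge (\pi/(b-a))^2\int_a^b\psi^2$ because $b-a\le L/2<\pi/2$. Hence $\psi\ge 0$ on $[0,L/2]$, and since $\psi(0)=0$ we get $\psi'(0)=G'(0)+G'(L)\ge 0$. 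Combining with the convexity step finishes all three inequalities.

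The only mildly delicate point is choosing the right auxiliary function: one must reflect with a minus sign so that the forcing $g(\theta)-g(L-\theta)$ is signed on $[0,L/2]$ and so that $\psi(L/2)=0$, producing a Dirichlet problem on an interval of length $<\pi/2$ where the Poincar\'e-type maximum principle is valid; the combination $G(\theta)+G(L-\theta)$ would instead only reproduce the already-known inequality $G'(L)\ge G'(0)$. Regularity causes no trouble, since $g$ continuous gives $G\in C^2([0,L])$.
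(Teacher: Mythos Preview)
Your proof is correct and takes a genuinely different route from the paper's. The paper does not use the reflected function $\psi(\theta)=G(\theta)-G(L-\theta)$; instead it tests the equation $G''+4G=g$ against $\sin(2\theta)$ and $\cos(2\theta)$ to obtain the explicit formulas
\[
G'(L)=\frac{1}{\sin(2L)}\int_0^L g(\theta)\sin(2\theta)\,d\theta,\qquad
G'(0)=\frac{\cos(2L)}{\sin(2L)}\int_0^L g(\theta)\sin(2\theta)\,d\theta-\int_0^L g(\theta)\cos(2\theta)\,d\theta,
\]
writes $G'(L)+G'(0)=\int_0^L g(\theta)K(\theta)\,d\theta$ with $K(\theta)=\sin(2\theta)\frac{1+\cos(2L)}{\sin(2L)}-\cos(2\theta)$, verifies the antisymmetry $K(L-\theta)=-K(\theta)$, and concludes by symmetrizing the integral and using $g'\ge 0$. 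Your approach is more PDE-theoretic and neatly recycles the Poincar\'e/maximum-principle argument of the preceding lemma on the half-interval $[0,L/2]$; it avoids any explicit trigonometric computation and would extend verbatim to other second-order operators with the same sign structure. The paper's approach, on the other hand, has the advantage that the explicit formula for $G'(L)$ is immediately reused in the \emph{next} lemma to prove $G'(L)\ge c\int_0^L g$, so the integral representation is not wasted effort in context.
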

\begin{proof}
Note that since $G\leq 0$ and $G(0)=G(L)=0$ we must have $-G'(0),G'(L)\geq 0$. 
Multiplying \eqref{ellipticproblem} by $\sin(2\theta)$ and $\cos(2\theta)$ and integrating gives us the following two identities:
$$\sin(2L) G'(L) = \int_0^L g(\theta)\sin(2\theta)d\theta,$$ and
$$\cos(2L)G'(L)-G'(0)=\int_0^L g(\theta) \cos(2\theta)d\theta,$$
which imply: 
$$G'(L) = \frac{1}{\sin(2L)}\int_0^L g(\theta)\sin(2\theta)d\theta,$$
and
$$G'(0)=\frac{\cos(2L)}{\sin(2L)}\int_0^{L}g(\theta)\sin(2\theta)d\theta-\int_0^L g(\theta)\cos(2\theta)d\theta.$$
Thus, $$G'(L)+G'(0)= \int_0^L g(\theta) \Big(\sin(2\theta)\frac{1+\cos(2L)}{\sin(2L)}-\cos(2\theta)\Big)d\theta.$$
Now define $K(\theta):=\sin(2\theta)\frac{1+\cos(2L)}{\sin(2L)}-\cos(2\theta),$ and note that $K(0)=-1$ and $K(L)=1$. In fact, note the following identity:
$$K(L-\theta)=-K(\theta).$$ Indeed, \begin{equation*}
\begin{split}
K(L-\theta) &= \sin(2(L-\theta))\frac{1+\cos(2L)}{\sin(2L)}-\cos(2(L-\theta)) \\
&=(\sin(2L)\cos(2\theta)-\cos(2L)\sin(2\theta))\frac{1+\cos(2L)}{\sin(2L)}-\cos(2L)\cos(2\theta)-\sin(2L)\sin(2\theta) \\
&= \cos(2\theta)-\sin(2\theta)\frac{\cos(2L)+1}{\sin(2L)}=-K(\theta).
\end{split}
\end{equation*} 
Note also that $K(\theta)\leq 0$ on  $[0,L/2]$ and $K(\theta)\geq 0$ on $[L/2,L]$
Hence, \begin{equation*}
\begin{split}
G'(L)+G'(0)=\int_0^L g(\theta) K(\theta) d\theta &=\int_0^{L/2} g(\theta)K(\theta)d\theta +\int_{L/2}^L g(\theta)K(\theta)d\theta \\
&=\int_{L/2}^{L} (g(\theta)- g(L-\theta)) K(\theta)\geq 0,
\end{split}
\end{equation*}  since $g'\geq 0$. Now we are done. 
\end{proof}
\begin{lemma}
If $g,g'\geq 0$ and $L<{\pi}/{2}$ then there exists $c=c(L)>0$ so that 
$$G'(L)\geq c \int_0^{L} g(\theta)d\theta.$$
\end{lemma}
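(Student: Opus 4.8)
The plan is to reuse the explicit formula for $G'(L)$ that already appeared in the proof of the preceding lemma. Multiplying the elliptic problem \eqref{ellipticproblem} by $\sin(2\theta)$ and integrating by parts twice on $[0,L]$, using $G(0) = G(L) = 0$, one obtains the identity
\begin{equation*}
G'(L)\sin(2L) = \int_0^L g(\theta)\sin(2\theta)\,d\theta .
\end{equation*}
Since $0 < L < \pi/2$ we have $0 < 2L < \pi$, so $\sin(2L) > 0$ and the factor $1/\sin(2L)$ is a well-defined positive constant depending only on $L$.

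Next I would localize the integral. On the whole interval $[0,L]$ the function $\sin(2\theta)$ is non-negative, so dropping the portion over $[0,L/2]$ and using $g \ge 0$ gives $G'(L)\sin(2L) \ge \int_{L/2}^{L} g(\theta)\sin(2\theta)\,d\theta$. On $[L/2,L]$ the argument $2\theta$ ranges in $[L,2L] \subset (0,\pi)$, and since $\sin$ is concave there, $\sin(2\theta) \ge m(L) := \min\{\sin L,\ \sin 2L\} > 0$. Hence
\begin{equation*}
G'(L) \ge \frac{m(L)}{\sin(2L)} \int_{L/2}^{L} g(\theta)\,d\theta .
\end{equation*}

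The final ingredient is the monotonicity hypothesis $g' \ge 0$, which I would use to compare $\int_{L/2}^L g$ with $\int_0^L g$: the substitution $\theta \mapsto \theta + L/2$ together with $g(\theta + L/2) \ge g(\theta)$ gives $\int_{L/2}^{L} g = \int_0^{L/2} g(\theta + L/2)\,d\theta \ge \int_0^{L/2} g(\theta)\,d\theta$, so $\int_0^L g \le 2\int_{L/2}^L g$. Combining this with the previous display proves the lemma with
\begin{equation*}
c(L) = \frac{m(L)}{2\sin(2L)} = \frac{\min\{\sin L,\ \sin 2L\}}{2\sin(2L)} > 0 .
\end{equation*}

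Every step here is an elementary inequality, so I do not expect a genuine obstacle; the only point deserving care is that both the solvability of \eqref{ellipticproblem} and the positivity of $\sin(2\theta)$ on $[L/2,L]$ (together with $\sin(2L) > 0$) hinge on the standing assumption $L < \pi/2$ — as $L \uparrow \pi/2$ the constant $c(L)$ degenerates, which is consistent with the fact that the clean $1/\sin(2L)$ identity breaks down at $L = \pi/2$.
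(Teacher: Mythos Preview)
Your proof is correct and follows essentially the same approach as the paper: both start from the identity $G'(L)=\frac{1}{\sin(2L)}\int_0^L g(\theta)\sin(2\theta)\,d\theta$ established in the preceding lemma, and the paper simply asserts the final inequality in one line, implicitly relying on the monotonicity argument (at least half the mass of $g$ sits in $[L/2,L]$ where $\sin(2\theta)$ is bounded below) that was spelled out earlier for the case $L=\pi/4$ and that you have reproduced here in full.
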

\begin{proof}
Recall from the proof of the last lemma that $$G'(L)=\frac{1}{\sin(2L)}\int_0^L g(\theta)\sin(2\theta)d\theta\geq c(L)\int_0^L g(\theta)d\theta.$$ This finishes the proof.
\end{proof} 
\section{Blow-up for solutions with vorticity of compact support}\label{sec:compact_blowup}
In this section, we show that solutions to the $2D$ Boussinesq system \eqref{B1} -- \eqref{B2} with vorticity and density of compact support could blow up in finite time. This is done by taking initial data which is the sum of a radially homogeneous part and a remainder term which is smooth. The following result shows that if the radially homogeneous part blows up, then it implies blow up for the $2D$ solution. (The analogous theorem for the SQG equation has appeared in \cite{EJSI}.)
\begin{theorem}\label{thm:1Dplus2D}
Consider initial data $(\omega_0,\nabla^\perp\rho_0) \in \mathring{C}^{\alpha}(\overline{\Omega})$ such that $\omega_0$ and $\partial_{x_2}\rho_0$ are odd and $\partial_{x_1}\rho_0$ is even. Further assume that there is a decomposition \begin{equation}\label{eq:decomp}
\begin{split}
\omega_0 &= \omega_0^{1D} + \tilde{\omega}_0, \qquad
\rho_0 = \rho_0^{1D} + \tilde{\rho}_0,
\end{split}
\end{equation} and $\omega_0^{1D}, \nabla^\perp\rho_0^{1D}$ have the same set of symmetries as $\omega_0$ and $\nabla^\perp\rho_0$. In the decomposition we assume that for some $0 < \alpha <1$, $\omega_0^{1D}$ and $\rho_0^{1D}$ are radially homogeneous and satisfies \begin{equation*}
\begin{split}
&\omega_0^{1D}(r,\theta) = g_0(\theta),\qquad g_0 \in C^{1,\alpha}([-\pi/4,\pi/4]),\\
&\rho_0^{1D}(r,\theta) = rP_0(\theta),\qquad P_0 \in C^{2,\alpha}([-\pi/4,\pi/4])
\end{split}
\end{equation*} and the remainder satisfies \begin{equation*}
\begin{split}
\tilde{\omega}_0,\quad \nabla^\perp\tilde{\rho}_0 \in C^{\alpha}(\overline{\Omega}) \qquad \mbox{and} \qquad \tilde{\omega}_0(0) = 0,\quad \nabla^\perp\tilde{\rho}_0(0) = (0,0)^T. 
\end{split}
\end{equation*}
\begin{enumerate}
\item (Local well-posedness for the decomposition) There exists some $T > 0$ depending only on the norms of the initial data $\omega_0^{1D}, \tilde{\omega}_0, \rho_0^{1D}$, and $\tilde{\rho}_0$ that the unique local-in-time solution  $(\omega,\nabla^\perp\rho) \in L^\infty([0,T];\mathring{C}^{\alpha}(\overline{\Omega}))$ with $(\omega_0,\nabla^\perp\rho_0)$ has the form \begin{equation*}
\begin{split}
\omega = \omega^{1D} + \tilde{\omega}, \qquad \rho = \rho^{1D} + \tilde{\rho}.
\end{split}
\end{equation*} In this decomposition, $\omega^{1D}(r,\theta) = g(\theta)$ and $\rho^{1D}(r,\theta) = rP(\theta)$ where $(g,P)$ is the unique local-in-time solution belonging to $C^0([0,T];C^{1}\times C^{2}[-\pi/4,\pi/4]) $ of the $1D$ system \eqref{BSI1} -- \eqref{BSI2} with initial data $(g_0,P_0)$. Moreover, the remainder part retains the usual H\"older regularity: $\tilde{\omega},\nabla^\perp\tilde{\rho} \in C^0([0,T];C^{\alpha}(\overline{\Omega}))$. The lifespan $T > 0$ of this decomposition can be extended as long as both of $(\omega,\rho)$ and $(g,P)$ do  not blow up for their respective systems. 
\item ($1D$ blow-up implies $2D$ blow-up) Assume that the solution of the $1D$ system $(g,P)$ blows up at some finite time $T^* $. Then, the solution $(\omega,\rho)$ to the $2D$ system also blows up at some time $0 < T' \le T^*$, that is, \begin{equation*}
\begin{split}
\limsup_{t \nearrow T'} \left( \V \omega_t\V_{\mathring{C}^{\alpha}(\overline{\Omega})} + \V \nabla^\perp\rho_t\V_{\mathring{C}^{\alpha}(\overline{\Omega})} \right) = + \infty. 
\end{split}
\end{equation*}
\end{enumerate}
\end{theorem}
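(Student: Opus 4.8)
We prove part (1), which is the substance; part (2) will then follow in a few lines. The plan for part (1) is to realize the solution as an honest coupled system rather than a perturbation. Given $(g_0,P_0)$ as in the statement, invoke Proposition \ref{prop:LWP1D} for the unique local solution $(g,P)$ of \eqref{BSI1}--\eqref{BSI2}, and set $\omega^{1D}(t,r,\theta):=g(t,\theta)$, $\rho^{1D}(t,r,\theta):=rP(t,\theta)$, with velocity $u^{1D}:=\nabla^\perp(r^2G(t,\theta))$ where $G''+4G=g$; by the uniqueness statement of Lemma \ref{lem:Uniqueness_Poisson} this $u^{1D}$ is exactly the Biot--Savart velocity of $\omega^{1D}$ on $\Omega$. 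A direct computation — the same one used to derive \eqref{BSI1}--\eqref{BSI2} — shows that $(\omega,\rho)=(\omega^{1D}+\tilde\omega,\,\rho^{1D}+\tilde\rho)$ solves \eqref{B1}--\eqref{B2} if and only if, with total velocity $u=u^{1D}+\tilde u$ and $\tilde u=\nabla^\perp\tilde\Psi$, $\Delta\tilde\Psi=\tilde\omega$ with zero boundary values, the remainder solves
\begin{equation*}
\begin{split}
&\partial_t\tilde\omega + u\cdot\nabla\tilde\omega = \partial_{x_2}\tilde\rho - \tilde u\cdot\nabla\omega^{1D},\\
&\partial_t\nabla^\perp\tilde\rho + u\cdot\nabla\nabla^\perp\tilde\rho = \nabla u^{1D}\,\nabla^\perp\tilde\rho + \nabla\tilde u\,\nabla^\perp\rho^{1D} + \nabla\tilde u\,\nabla^\perp\tilde\rho - \tilde u\cdot\nabla\nabla^\perp\rho^{1D}.
\end{split}
\end{equation*}
So it is enough to run a local existence/uniqueness theory for this remainder system — coupled to the $1D$ system for $(g,P)$ — in the class $\tilde\omega,\nabla^\perp\tilde\rho\in C^0([0,T];C^\alpha(\overline\Omega))$ with $\tilde\omega(t,0)=0$, $\nabla^\perp\tilde\rho(t,0)=0$.

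The key point — and the step I expect to be the main obstacle — is that this remainder system preserves the class of functions that are $C^\alpha$ on $\overline\Omega$ and vanish at the corner. Here $u^{1D},\nabla^\perp\rho^{1D}$ are radially homogeneous of degree $0$, hence in $\mathring C^\alpha(\overline\Omega)$ with norms controlled by $\|g\|_{C^{1,\alpha}}$ and $\|P\|_{C^{1,\alpha}}$, while $\nabla\nabla^\perp\rho^{1D}$ is homogeneous of degree $-1$. For $\tilde u$: the odd symmetry across $\{x_2=0\}$ means $\tilde\Psi$ solves a Dirichlet problem effectively on a sector $\Omega_\beta$ of \emph{half} the opening of $\Omega$, so that $1/\beta-2>\alpha$ (for $\Omega$ itself, $1/\beta=4$); thus Lemmas \ref{lem:C^alpha}, \ref{lem:C^circlealpha} and Corollary \ref{cor:C^alphaintC^circlealpha} give $\nabla^2\tilde\Psi\in C^\alpha\cap\mathring C^\alpha(\overline\Omega)$. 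Moreover the kernel $P_\beta(z,w)$ carries a factor $z^{1/\beta-2}$ with positive exponent, so $P_\beta(0,w)=0$; since also $\tilde\omega(0)=0$ and $\int_{\Omega_\beta}P_\beta(z,w)\,dw$ is a constant independent of $z$ (the remark after Lemma \ref{lem:Uniqueness_Poisson}), one obtains $\nabla^2\tilde\Psi(0)=0$, hence $|\nabla\tilde u(x)|\le C|x|^\alpha$, $|\tilde u(x)|\le C|x|^{1+\alpha}$ and $\tilde u(x)/|x|\in C^\alpha(\overline\Omega)$. With these facts and Lemma \ref{lem:Holder_product}, every forcing term above lies in $C^\alpha(\overline\Omega)$ and vanishes at the corner: $\partial_{x_2}\tilde\rho$ directly; $\tilde u\cdot\nabla\omega^{1D}=g'(\theta)\big(\tfrac{\tilde u}{|x|}\cdot\hat\theta\big)$ by Lemma \ref{lem:Holder_product}; $\nabla u^{1D}\nabla^\perp\tilde\rho$ and $\nabla\tilde u\,\nabla^\perp\rho^{1D}$ as products of type [$\mathring C^\alpha$]$\times$[$C^\alpha$ vanishing at $0$]; $\nabla\tilde u\,\nabla^\perp\tilde\rho$ as a product of two $C^\alpha$ functions vanishing at $0$; and $\tilde u\cdot\nabla\nabla^\perp\rho^{1D}$ as $O(|x|^{1+\alpha})\cdot O(|x|^{-1})$. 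Then a Picard iteration with Gr\"onwall estimates — structurally identical to those in the proofs of Theorem \ref{thm:LWP2D} and Proposition \ref{prop:LWP1D}, now for the quadruple $(g,P,\tilde\omega,\nabla^\perp\tilde\rho)$ — produces a unique local solution on $[0,T]$ with $T$ depending only on the data norms, and one reads off that $T$ extends up to any time before which neither $(g,P)$ nor $(\omega,\rho)$ blows up. Finally, $\omega^{1D}+\tilde\omega$ is an $\mathring C^\alpha$-solution of \eqref{B1}--\eqref{B2} with the prescribed data, so Theorem \ref{thm:LWP2D} identifies it with the unique solution there; and the decomposition is unique because the vanishing of the remainder at the corner forces $g(t,\theta)=\lim_{r\to 0}\omega(t,r,\theta)$, which then pins down $\tilde\omega,\rho^{1D},\tilde\rho$.

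For part (2), suppose $(g,P)$ blows up at $T^*<\infty$ and, for contradiction, that $(\omega,\rho)$ does not blow up by $T^*$, i.e.\ $M:=\sup_{t<T^*}\big(\|\omega_t\|_{\mathring C^\alpha}+\|\nabla^\perp\rho_t\|_{\mathring C^\alpha}\big)<\infty$. By part (1) the decomposition $\omega=\omega^{1D}+\tilde\omega$ is valid on $[0,T^*)$; since $\tilde\omega(t,\cdot)\in C^\alpha(\overline\Omega)$ vanishes at the corner, $\omega(t,r,\theta)\to g(t,\theta)$ as $r\to0$, whence $\|g(t)\|_{L^\infty}\le\|\omega(t)\|_{L^\infty}\le\|\omega(t)\|_{\mathring C^\alpha}\le M$ for $t<T^*$. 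Then $\int_0^{T^*}\|g_t\|_{L^\infty}\,dt\le MT^*<\infty$, so Proposition \ref{prop:LWP1D} continues $(g,P)$ past $T^*$ — a contradiction. Hence $(\omega,\rho)$ blows up at some $T'\le T^*$, and since $\|\nabla u\|_{L^\infty}\le C\|\omega\|_{\mathring C^\alpha}$ the blow-up criterion of Theorem \ref{thm:LWP2D} forces $\limsup_{t\nearrow T'}\big(\|\omega_t\|_{\mathring C^\alpha}+\|\nabla^\perp\rho_t\|_{\mathring C^\alpha}\big)=+\infty$, as claimed. The only genuinely delicate part of the whole argument is the preservation of the ``$C^\alpha$ vanishing at the corner'' structure in part (1), which hinges on the sharp sector estimates and, crucially, on $\nabla^2\tilde\Psi(0)=0$ — available only because the imposed odd symmetry halves the effective opening angle so that $1/\beta-2>\alpha$.
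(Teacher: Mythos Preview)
Your proof is correct and follows essentially the same strategy as the paper: derive the remainder system, establish the key decay $|\tilde u(x)|\lesssim|x|^{1+\alpha}$ (equivalently $\nabla\tilde u(0)=0$) so that Lemma \ref{lem:Holder_product} closes the $C^\alpha$ a priori estimates on the forcing terms, and for part (2) recover the $1D$ solution as the limit of the $2D$ one at the corner. The only implementation differences are that the paper obtains the decay $|\tilde u(x)|\lesssim|x|^{1+\alpha}$ by a direct Biot--Savart computation with the 4-fold symmetrized kernel (rather than your $P_\beta(0,w)=0$ argument via the factor $z^{1/\beta-2}$), and in part (2) it bounds $\|P\|_{\mathrm{Lip}}$ and $\|g\|_{\mathrm{Lip}}$ through $\limsup_{|x|\to 0}|\nabla^\perp\rho|$ and $\limsup_{|x|\to 0}|\nabla u|$, whereas your route via $\|g\|_{L^\infty}\le\|\omega\|_{L^\infty}$ and the continuation criterion of Proposition \ref{prop:LWP1D} is arguably cleaner.
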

Given the conditional blow-up result, the $1D$ blow-up result from Section \ref{sec:1D} immediately finishes the proof of Theorem \ref{MainThm3}.
\begin{proof}[Proof of Theorem \ref{MainThm3}]
We simply take initial data of the form $\omega_0 = \omega_0^{1D} + \tilde{\omega}_0$ and $\rho_0 = \rho_0^{1D} + \tilde{\rho}_0$, where $(\omega^{1D}_0,\rho^{1D}_0)$ is radially homogeneous and the corresponding solution of the $1D$ system blows up in finite time. Here, $\tilde{\omega}$ and $\tilde{\rho}$ can be chosen to be smooth and make $\omega$ and $\nabla^\perp\rho$ compactly supported. (For instance, $\omega_0,\nabla^\perp\rho_0$ can belong to $\mathring{C}^\infty$ and $\tilde{\omega}_0,\nabla^\perp\tilde{\rho}_0$ to $C^\infty$.) Since the vorticity is odd, the corresponding velocity decays as $|x|^{-2}$ as $|x| \rightarrow +\infty$ and in particular, it has finite energy. 
\end{proof}
\begin{proof}[Proof of local well-posedness for the decomposition]
We first pick $T_1 > 0$ such that the following holds: \begin{enumerate}
\item There is a unique solution $(g,P)$ to the $1D$ system on $[0,T_1]$, with $g, \partial_{\theta} P \in C^{1,\alpha}[-\pi/4,\pi/4]$.
\item There is a unique solution $(\omega,\rho)$ to the $2D$ system on $[0,T_1]$ with $\omega,\nabla^\perp\rho \in \mathring{C}^\alpha(\overline{D})$. 
\end{enumerate}
From now on, we shall work on the time interval $[0,T_1]$. We begin with defining \begin{equation*}
\begin{split}
\tilde{\omega} = \omega - g(\theta),\qquad \tilde{\rho} = \rho - rP(\theta). 
\end{split}
\end{equation*} It is straightforward to check that, $\tilde{\omega}$ and $\tilde{\rho}$ respectively satisfy the following equations: \begin{equation}\label{eq:omega_remainder}
\begin{split}
\pr_t\tilde{\omega} + (u^{1D} + \tilde{u})\cdot\nabla\tilde{\omega} + \tilde{u}\cdot\nabla\omega^{1D} = \partial_{x_2}\tilde{\rho},
\end{split}
\end{equation} and \begin{equation}\label{eq:rho_remainder}
\begin{split}
\pr_t\tilde{\rho} + (u^{1D} + \tilde{u})\cdot\nabla\tilde{\rho} + \tilde{u}\cdot\nabla \rho^{1D} = 0.
\end{split}
\end{equation} We restrict ourselves to obtaining appropriate a priori $C^\alpha$-estimates, for solutions of the system \eqref{eq:omega_remainder} -- \eqref{eq:rho_remainder}. Once it is done, it is straightforward to show that there actually exists $C^\alpha$ solution to the system for some time, and that this solution indeed coincides with the difference $\omega - g(\theta)$ and $\rho - rP(\theta)$.   We first observe an $L^\infty$-bound for $\tilde{\omega}$:\begin{equation*}
\begin{split} 
\frac{d}{dt} \V \tilde{\omega}\V_{L^\infty} &\le \V \nabla^\perp\tilde{\rho}\V_{L^\infty} + \V \tilde{u}\cdot\nabla g\V_{L^\infty}  \\
&\le \V \nabla^\perp\tilde{\rho}\V_{L^\infty}  + \left| \frac{\tilde{u}(x)}{|x|}\right|_{L^\infty} \cdot \V g \V_{\mathrm{Lip}} \le \V \nabla^\perp\tilde{\rho}\V_{L^\infty}  + \V \tilde{\omega}\V_{L^\infty}\V g \V_{\mathrm{Lip}}. 
\end{split}
\end{equation*}  Then in turn, \eqref{eq:rho_remainder} shows that $\tilde{\rho}(0) = 0$, and hence $|\tilde{\rho}(x)| \lesssim |x|$ as well. Next, by differentiating \eqref{eq:rho_remainder}, we obtain the evolution equation for $\nabla^\perp\tilde{\rho}$: \begin{equation}\label{eq:nablarho_remainder}
\begin{split}
\pr_t \nabla^\perp\tilde{\rho} + (u^{1D} + \tilde{u})\cdot\nabla (\nabla^\perp\tilde{\rho}) + \tilde{u}\cdot\nabla (\nabla^\perp \rho^{1D}) = \nabla u\nabla^\perp\tilde{\rho} + \nabla\tilde{u}\nabla^\perp\rho^{1D}. 
\end{split}
\end{equation} Then, \begin{equation*}
\begin{split}
\frac{d}{dt} \V \nabla^\perp\tilde{\rho} \V_{L^\infty} \le \left| \frac{\tilde{u}(x)}{|x|} \right|_{L^\infty} \cdot \V P \V_{\mathrm{Lip}} + \V \nabla u \V_{L^\infty} \left( \V \nabla^\perp\tilde{\rho} \V_{L^\infty} + \V P \V_{\mathrm{Lip}} \right). 
\end{split}
\end{equation*} 
At this point, we note that if $\tilde{\omega}$ is a bounded function on $\Omega$ and satisfies $|\tilde{\omega}(x)| \le C |x|^\alpha$, the corresponding velocity satisfies \begin{equation*}
\begin{split}
|\tilde{u}(x)| \lesssim |x|^{1+\alpha}. 
\end{split}
\end{equation*} To see this, we write \begin{equation*}
\begin{split}
\tilde{u}(x) = \frac{1}{2\pi} \int_{|x-y| \le 2|x|} \frac{(x-y)^\perp}{|x-y|^2}\tilde{\omega}(y)dy + \frac{1}{2\pi} \int_{|x-y| > 2|x|} \frac{(x-y)^\perp}{|x-y|^2}\tilde{\omega}(y)dy ,
\end{split}
\end{equation*} where $\tilde{\omega}$ have been extended to a bounded and 4-fold symmetric function on $\mathbb{R}^2$. Then, in the region $|x-y| \le 2|x|$, $|y| \le C|x|$, so that a direct integration gives \begin{equation*}
\begin{split}
\left|  \int_{|x-y| \le 2|x|} \frac{(x-y)^\perp}{|x-y|^2}\tilde{\omega}(y)dy  \right| \le C|x|^{1+\alpha}. 
\end{split}
\end{equation*} On the other hand, for the other region, one can symmetrize the kernel to obtain decay like $1/|y|^4$, for $x$ fixed (see \cite{E1} ,\cite{EJSI}), and then directly integrate using $|\tilde{\omega}(y)|\lesssim |y|^\alpha$ to again get a contribution of the form $C|x|^{1+\alpha}$. 
In particular, this forces $\nabla \tilde{u}(0) = 0$, and we also have $|\nabla\tilde{u}(x)|\lesssim |x|^\alpha$. Plugging in this information into \eqref{eq:nablarho_remainder}, it follows that $\nabla^\perp\tilde{\rho}(0) = 0$ as well. Hence, we have shown that \begin{equation}\label{eq:Holdervanishing}
\begin{split}
\frac{|\tilde{u}(x)|}{|x|},|\nabla \tilde{u}(x)| \le \V \nabla\tilde{u}\V_{C^{\alpha}}|x|^{\alpha},\qquad \mbox{and}\qquad \frac{|\tilde{\rho}(x)|}{|x|}, |\nabla^\perp\tilde{\rho}(x)| \le \V \nabla^\perp\tilde{\rho}\V_{C^\alpha}|x|^{\alpha}. 
\end{split}
\end{equation} We may therefore apply Lemma \ref{lem:Holder_product} to gradients of $\tilde{u}$ and $\tilde{\rho}$, from now on. 
We proceed to obtaining $C^\alpha$-bounds for $\tilde{\omega}$. Composing with the flow generated by $u = u^{1D} + \tilde{u}$, \begin{equation*}
\begin{split}
\pr_t \tilde{\omega}\circ X = \partial_{x_2}\tilde{\rho}\circ X - (\tilde{u}\cdot\nabla\omega^{1D})\circ X. 
\end{split}
\end{equation*} The flow map is Lipschitz in space, and note that \begin{equation*}
\begin{split}
\tilde{u}\cdot\nabla\omega^{1D} = g'(\theta) \frac{ \tilde{u}(x)\cdot x^\perp}{|x|^2}.
\end{split}
\end{equation*} From Lemma \ref{lem:Holder_product}, it follows that $\V \tilde{u}\cdot\nabla\omega^{1D} \V_{C^\alpha} \le C \V g\V_{C^{1,\alpha}} \V\nabla\tilde{u}\V_{C^\alpha}$. Hence \begin{equation*}
\begin{split}
\partial_{x_2}\tilde{\rho}\circ X - (\tilde{u}\cdot\nabla\omega^{1D})\circ X
\end{split}
\end{equation*} is bounded in $C^\alpha$, and arguing along the lines of the proof of Theorem \ref{thm:LWP2D}, one easily obtains an a priori $C^\alpha$ bound for $\tilde{\omega}$. The argument for $\nabla^\perp\tilde{\rho}$ is strictly similar: it suffices to obtain a $C^\alpha$-bound for \begin{equation*}
\begin{split}
\nabla u\nabla^\perp\tilde{\rho} + \nabla\tilde{u}\nabla^\perp\rho^{1D} - \tilde{u}\cdot\nabla(\nabla^\perp\rho^{1D}). 
\end{split}
\end{equation*} Each term can be shown to be bounded in $C^\alpha$ with a simple application of Lemma \ref{lem:Holder_product}. 
Hence, we have shown existence of appropriate  $C^\alpha$ a priori inequalities for $\tilde{\omega}$ and $\nabla^\perp\tilde{\rho}$. The $C^\alpha$-norms cannot blow up for some time interval $[0,T]$, with $0 < T \le T_1$. 
Lastly, we argue that the decomposition is valid as long as $(\omega,\rho)$ and $(g,P)$ stay regular for the $2D$ and the $1D$ systems, respectively. Let $T^*$ be a potential blow-up time for the system \eqref{eq:omega_remainder} -- \eqref{eq:nablarho_remainder} in $C^{\alpha}(\overline{\Omega})$. If $(\omega,\rho)$ and $(g,P)$ stay regular up to time $T^*$, it means in particular that $\V \nabla u\V_{L^\infty} + \V \nabla^\perp\rho\V_{L^\infty} + \V g\V_{\mathrm{Lip}} + \V P \V_{\mathrm{Lip}}$ remains bounded uniformly in the time interval $[0,T^*]$. In particular $\V \nabla\tilde{u}\V_{L^\infty} + \V \nabla^\perp\tilde{\rho}\V_{L^\infty}$ stays bounded as well, but this controls the blow-up for the system  \eqref{eq:omega_remainder} -- \eqref{eq:nablarho_remainder} in $C^{\alpha}$ (see for instance the arguments given in the proof of Theorem \ref{thm:LWP2D}). This finishes the proof. 
\end{proof}
\begin{proof}[Proof of $2D$ blow-up from $1D$ blow-up]
For the sake of contradiction, assume that the solution $(\omega,\nabla^\perp\rho)$ to the $2D$ system stays in $\mathring{C}^{\alpha}$ up to the time moment $T^*$. Then, in particular we have that \begin{equation}\label{eq:blowup_implications}
\begin{split}
\sup_{t \in [0,T^*]} \left( \V \nabla u_t \V_{L^\infty} + \V \nabla^\perp\rho_t \V_{L^\infty}\right) \le C
\end{split}
\end{equation} for some constant $C > 0$, as well as \begin{equation}\label{eq:blowup_implications2}
\begin{split}
\limsup_{t \nearrow T^*} \left(   \V g_t\V_{\mathrm{Lip}} + \V P_t\V_{\mathrm{Lip}}   \right) \rightarrow + \infty. 
\end{split}
\end{equation} 
Now fix some $t < T^*$ and taking $\limsup$ in space, \begin{equation*}
\begin{split}
C \ge \limsup_{|x| \rightarrow 0} |\nabla^\perp\rho_t(x)| = \limsup_{|x| \rightarrow 0} |\nabla^\perp \rho^{1D}_t(x)| \ge c\V P\V_{\mathrm{Lip}},
\end{split}
\end{equation*} simply because we have $|\nabla^\perp\tilde{\rho}(x)|\lesssim |x|^{\alpha}$ as long as $t < T^*$. Similarly, we deduce that \begin{equation*}
\begin{split}
C \ge c\V g\V_{\mathrm{Lip}}.
\end{split}
\end{equation*} The constants $c, C > 0$ are independent of $t$. Taking the limsup for $t \nearrow T^*$ gives a contradiction. 
\end{proof}
\section{Blow-up for $C^\infty$ solutions with bounded vorticity}\label{sec:smooth_blowup}
In this section, we give a sketch of the proof of Theorem \ref{MainThm4}. While there have been many proofs of blow-up for infinite energy $C^\infty$ solutions to the incompressible Euler equations (even in $2D$!) and the Boussinesq system, none of them seem to be have bounded vorticity. In fact, for the stagnation-point ansatz solutions constructed in \cite{Con}, \cite{CISY}, and \cite{SaWu}, the vorticity and all of its derivatives are actually growing linearly at spatial infinity. Those solutions do not approximate compactly supported data in $2D$ no matter how large the support is.  The $C^\infty$ blowing-up solutions we construct do approximate compactly supported data and, thus, have some physical significance. In the previous section, we showed that if we "cut"  scale-invariant data at infinity -- so that they are compactly supported -- then the resulting compactly supported solution must blow-up at a time $t\leq T^*$ where $T^*$ is the blow-up time for the scale-invariant solution. We did this (via a contradiction argument) by studying $$\limsup_{|x|\rightarrow 0} |\omega(x)|.$$ Here, we will do the same except that we will cut out the area around $(0,0)$ which will automatically make the data and solution $C^{\infty}$ and then we will show that if a solution exists up to $T^*$, then $$\V \omega \V_{L^\infty}\geq \limsup_{|x|\rightarrow\infty}|\omega(x,t)|\rightarrow\infty$$ as $t\rightarrow T^*$. Since the proof of Theorem \ref{MainThm4} is so similar to the proof of \ref{MainThm3}, we will only give a sketch. 
\begin{proof}[Proof of Theorem \ref{MainThm4}]
Just as in the proof of Theorem \ref{MainThm3}, we proceed by taking initial data 
$$\omega_0 = \omega_0^{1D}+\tilde\omega_0,\qquad \rho_0 = \rho_0^{1D}+\tilde\rho_0$$
with $$\omega_0^{1D}=g_0(\theta),\qquad \rho_0^{1D}=r P_0(\theta)$$ and $\omega_0, \rho_0\in C^\infty(\Omega)$ and supported away from the origin, while $\text{supp}(\tilde\omega_0), \text{supp}(\tilde\rho_0)\subset B_1(0).$ Towards a contradiction, let us assume that the local solution\footnote{It is not difficult to show that the local well-posedness theorem, Theorem \ref{thm:LWP2D}, can be proven for $C^{k,\alpha}(\Omega)$ instead of $\mathring{C}^{k,\alpha}$ solutions, for any $k \ge 1$, as long as the data is supported away from the origin.} emanating from $(\omega_0, \rho_0)$ is global. We wish to prove that if $(\omega^{1D},\rho^{1D}):=(g(t,\theta), P(t,\theta))$ is the unique solution to  \eqref{BSI1} -- \eqref{BSI2} starting from $(g_0,P_0)$, then 
\begin{equation}\label{keyestimate1}\lim_{|x|\rightarrow \infty}|\omega(t)-\omega^{1D}(t)|=0\end{equation}
for all $t\in [0,T^*)$ where $T^*$ is such that $\omega$ and $g$ remain bounded for all $0\leq t<T^*.$ In the following, $T^*$ will be the blow-up time of $(g,P)$. Let us define $\tilde\omega=\omega-\omega^{1D}$ and $\tilde\rho=\rho-\rho^{1D}.$ As before, $\tilde\omega$ and $\tilde\rho$ satisfy the following system:   
\begin{equation}\label{eq:omega_remainder2}
\begin{split}
\pr_t\tilde{\omega} + u\cdot\nabla\tilde{\omega} + \tilde{u}\cdot\nabla\omega^{1D} = \partial_{x_2}\tilde{\rho},
\end{split}
\end{equation} and \begin{equation}\label{eq:rho_remainder2}
\begin{split}
\pr_t\tilde{\rho} + u\cdot\nabla\tilde{\rho} + \tilde{u}\cdot\nabla \rho^{1D} = 0.
\end{split}
\end{equation} 
At time $t=0$, $\tilde\omega$ and $\tilde\rho$ are compactly supported. We will show that on $[0,T^*)$, \begin{equation} \label{decayestimate} |\nabla\tilde\rho(x)|+|\tilde\omega(x)|\lesssim \frac{1}{|x|} \end{equation} for $|x|>1$ which implies \eqref{keyestimate1}. We remark that the estimate is allowed to degenerate as $t\rightarrow T^*$. Now note that 
$$\partial_t (|x|\tilde\omega)+\tilde\omega u\cdot\nabla|x| + u\cdot\nabla (|x|\tilde\omega)+ \frac{\tilde u\cdot x^\perp}{|x|}=|x|\partial_{x_2} \tilde\rho.$$
It is easy to see that, $$\frac{d}{dt}\V |x|\tilde\omega\V_{L^\infty} \leq  \V |x|\tilde\omega\V_{L^\infty} \V \nabla u\V_{L^\infty}+ \left\V \frac{\tilde{u}\cdot x^\perp}{|x|}\right\V_{L^\infty}+ \V |x|\nabla\rho\V_{L^\infty} $$ and that a similar estimate holds for $|x|\nabla\tilde\rho.$
Now, the key observation is: 
\begin{equation} \label{keyestimate2} \V \tilde u\V_{L^\infty(\Omega)} \leq \V |x|\tilde\omega\V_{L^\infty(\Omega)}. \end{equation} Indeed, observe that once we symmetrize the Biot-Savart kernel in the proper way, and make a change of variables, we have 
$$\tilde u(x) = |x|\int_{\mathbb{R}^2} \tilde{K}(\frac{x}{|x|},y)\tilde\omega(|x|y)dy$$ which directly implies \eqref{keyestimate2}.\footnote{Here, $\tilde{K}(x,y) := (K(x,y) + K(x,y^\perp) + K(x,-y) + K(x,-y^\perp))/4$ with $K(x,y) := (x-y)^\perp/2\pi|x-y|^2$ being the usual Biot-Savart kernel, and $\tilde{\omega}$ have been extended to be a 4-fold symmetric function on $\mathbb{R}^2$.} 
Similarly, $$\frac{d}{dt}\V |x|\nabla\tilde\rho\V_{L^\infty} \lesssim  \left(\V |x|\tilde\omega\V_{L^\infty}+\V |x|\nabla\tilde\rho\V_{L^\infty} \right) \V\nabla u\V_{L^\infty}+\V P\V_{W^{2,\infty}} \V|x|\nabla\tilde\omega\V_{L^\infty}.$$
This establishes\footnote{Strictly speaking, we should prove estimates on $\V |x|\exp(-\epsilon |x|^2)\tilde\omega\V_{L^\infty}$ which are independent of $\epsilon>0$ and then pass to the limit to get \eqref{decayestimate}. This can be done in the same way.} \eqref{decayestimate} which implies: $$\lim_{|x|\rightarrow \infty} |\omega(x)- \omega^{1D}|=|\tilde\omega(x)|=0.$$ 
However, $$\limsup_{|x|\rightarrow\infty} |\omega^{1D}|=\V \omega^{1D}\V_{L^\infty}\rightarrow\infty$$ as $t\rightarrow T^*$. This implies that:
$$\V\omega\V_{L^\infty}\geq \limsup_{|x|\rightarrow\infty}|\omega(x)|\geq\limsup_{x\rightarrow\infty}|\omega^{1D}|-\lim_{x\rightarrow\infty}|\omega-\omega^{1D}|=\limsup_{x\rightarrow\infty}|\omega^{1D}|.$$
As a consequence, $\V\omega\V_{L^\infty}\rightarrow \infty$ as $t\rightarrow T^*$ which is a contradiction. Hence, the unique local solution with data $\nabla\rho_0, \omega_0\in C^{\infty}\cap L^\infty(\Omega)$ loses regularity in finite time. 
 \end{proof}

\section{Blow up for finite-energy H\"older continuous solutions in the case of acute corners}\label{sec:acutecase}

Note that the results of the previous sections give local solutions which are $C^\infty$ away from the origin in the domain $\{(r,\theta) : -\beta\pi< \theta < \beta\pi \}$ for any $0<\beta<1/2$ and become singular in finite time. In particular, with those results, we can get arbitrarily close to the case of the half-space which is almost identical to the scenario in the numerical work \cite{HouLuo}. In this section, we prove a stronger result when the corner is acute -- that is, we take domains \begin{equation*}
\begin{split}
\Omega_\beta = \{ (r,\theta) : 0< \theta < \beta\pi   \}
\end{split}
\end{equation*} with some $\beta < 1/2$, and then we can show finite time singularity formation for compactly supported initial data $(\omega_0,\nabla\rho_0)$ which is uniformly $C^\alpha$ up to the corner. It is not clear what bearing this result has on the half-space case, but it is interesting to point out nonetheless.  

\begin{theorem}\label{AcuteCase}
Let $\beta < 1/2$ and for some integer $k \ge 0$, $0 < \alpha < 1/\beta - k - 2$. If $\omega_0,\nabla\rho_0\in C^{k,\alpha}(\overline{\Omega}_\beta)$ are such that $\omega(0)>0$ and $\partial_{x_2}\rho_0(0)>0$, then the local unique $C^\infty$ solution to \eqref{B1} -- \eqref{B2} becomes singular in finite time. 
\end{theorem}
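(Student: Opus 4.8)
The plan is to reduce Theorem \ref{AcuteCase} to the one-dimensional blow-up result (Theorem \ref{thm:1Dblowup}) through exactly the cut-off/decomposition scheme used for Theorems \ref{MainThm3} and \ref{MainThm4}, the one genuinely new ingredient being that on a \emph{sharp} sector the elliptic estimates of Lemma \ref{lem:C^alpha} and Corollary \ref{cor:C^alphaintC^circlealpha} hold for functions that are merely uniformly $C^{k,\alpha}$ up to the corner, precisely in the range $\alpha < 1/\beta - k - 2$, so the data need not vanish near $0$. Accordingly I would first record a local well-posedness statement for \eqref{B1}--\eqref{B2} in $C^{k,\alpha}(\overline{\Omega}_\beta)$ for compactly supported data, with the continuation criterion $\int_0^T \V \nabla u_t\V_{L^\infty}\,dt < \infty$; the argument is that of Theorem \ref{thm:LWP2D}, with the sector kernel bounds of Subsection \ref{subsec:gris} (and the convexity of the corner, which keeps the Neumann/pressure estimate valid) in place of the ones used there.

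Next I would set up the decomposition $\omega_0 = \omega_0^{1D} + \tilde\omega_0$, $\rho_0 = \rho_0^{1D} + \tilde\rho_0$ forced by the behavior of the data at the corner. Since $\omega_0$ is continuous at $0$, the only way to split off a radially $0$-homogeneous piece leaving a $C^{k,\alpha}$ remainder vanishing at the corner is to take $g_0 \equiv \omega_0(0)$ constant, and likewise (normalizing $\rho_0(0) = 0$) $\rho_0^{1D}(x) = \nabla\rho_0(0)\cdot x = rP_0(\theta)$ with $P_0(\theta) = \partial_{x_1}\rho_0(0)\cos\theta + \partial_{x_2}\rho_0(0)\sin\theta$, leaving $\tilde\omega_0 \in C^{k,\alpha}$ with $\tilde\omega_0(0) = 0$ and $\tilde\rho_0 \in C^{k+1,\alpha}$ with $\nabla\tilde\rho_0(0) = 0$. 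Exactly as in the proof of Theorem \ref{thm:1Dplus2D} — using that a $C^\alpha$ vorticity vanishing at the corner induces a velocity of size $|x|^{1+\alpha}$ (again by the rapid decay of the sector Green's kernel) together with the product rule of Lemma \ref{lem:Holder_product} — I would show this splitting is propagated: $\omega^{1D} = g(t,\theta)$ and $\rho^{1D} = rP(t,\theta)$, where $(g,P)$ solves the $1D$ system \eqref{BSI1}--\eqref{BSI2} on $[0,\beta\pi]$ with $G'' + 4G = g$ and $G(0) = G(\beta\pi) = 0$, while $\tilde\omega$ and $\nabla^\perp\tilde\rho$ stay bounded in $C^\alpha(\overline{\Omega}_\beta)$ as long as both the $1D$ and $2D$ solutions are regular.

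With the decomposition available, I would invoke the $1D$ blow-up. The positivity/monotonicity lemmas and the Riccati argument of Section \ref{sec:1D} apply on $[0,\beta\pi]$ since $\beta\pi < \pi/2$ (this is the $L < \pi/2$ variant established at the end of that section), and one checks the hypotheses for the inherited data: $g_0 \equiv \omega_0(0) > 0$ is constant, hence $g_0, g_0' \ge 0$; $P_0$ is a pure first harmonic, hence $P_0 + P_0'' \equiv 0 \ge 0$; and the remaining sign conditions $P_0, P_0' \ge 0$ and $P_0(\beta\pi) > \sec(\beta\pi)\,P_0(0)$ follow from the sign hypotheses on $\nabla\rho_0(0)$ (using $\beta\pi < \pi/2$). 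Thus $(g,P)$ becomes singular at some finite $T^*$, and from the Riccati structure $\dot A = cB$, $\dot B = cAB$ with $A(t) = \int_0^{\beta\pi} g_t$ and $B(t) = P_t(\beta\pi)$, in fact $\V g_t\V_{L^\infty} \to \infty$ as $t \nearrow T^*$. Now one argues by contradiction exactly as in Theorem \ref{thm:1Dplus2D}(2): if the $2D$ solution remained in $C^{k,\alpha}$ past $T^*$, then on $[0,T^*)$ the decomposition holds and $|\tilde\omega_t(x)| \lesssim |x|^\alpha$ near the corner, so $\V \omega_t\V_{L^\infty} \ge \limsup_{|x|\to 0}|\omega_t(x)| = \V g_t\V_{L^\infty} \to \infty$; this forces $\int_0^{T^*}\V\nabla u_s\V_{L^\infty}\,ds = \infty$ (via $\V\omega_t\V_{L^\infty} \le \V\omega_0\V_{L^\infty} + \int_0^t \V\nabla\rho_s\V_{L^\infty}\,ds$ and $\V\nabla\rho_t\V_{L^\infty} \le \V\nabla\rho_0\V_{L^\infty}\exp(\int_0^t\V\nabla u_s\V_{L^\infty}\,ds)$), contradicting regularity up to $T^*$. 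Hence the $C^{k,\alpha}$ solution — which may be taken $C^\infty$ in the interior of $\Omega_\beta$ if the data is — becomes singular at some $T' \le T^*$.

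The main obstacle is the first block of steps: running the local well-posedness and, above all, the propagation-of-decomposition estimates in the \emph{uniform} $C^{k,\alpha}(\overline{\Omega}_\beta)$ scale rather than in $\mathring{C}^\alpha$. The $\mathring{C}^\alpha$ arguments rely on the quantitative decay of derivatives, which is unavailable for functions that are only $C^\alpha$ up to the corner; this is exactly why the constraint $\alpha < 1/\beta - k - 2$ appears and why the sharp decay of the sector Green's kernel — yielding both the mapping $\nabla^2(-\Delta)^{-1} : C^\alpha(\overline{\Omega}_\beta) \to C^\alpha(\overline{\Omega}_\beta)$ and the bound $|\tilde u(x)| \lesssim |x|^{1+\alpha}$ for corner-vanishing $C^\alpha$ vorticity — is indispensable. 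Once those estimates are in place, the $1D$ blow-up and the limsup-at-the-corner contradiction are a direct transcription of Sections \ref{sec:1D} and \ref{sec:compact_blowup}.
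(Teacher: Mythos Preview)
Your route is the one the paper mentions only as an \emph{alternative} in the last sentence of its proof; the paper's primary argument is much simpler and bypasses the $1D$ system and the decomposition entirely. The essential observation there is that for $\beta<1/2$ and $k+\alpha<1/\beta-2$, the stream function is $C^{k+2,\alpha}$ up to the corner, so its quadratic Taylor polynomial at $0$ is the unique homogeneous quadratic that vanishes on $\partial\Omega_\beta$ and has Laplacian $\omega(0)$. Consequently $\nabla u(0)$ is a fixed linear function of the single number $\omega(0)$. Evaluating the full $2D$ vorticity and density-gradient equations at the origin then yields a \emph{closed} three-dimensional ODE for $(A,B,C):=(\omega(0),\partial_{x_1}\rho(0),\partial_{x_2}\rho(0))$, and the blow-up of this ODE gives the result directly --- no splitting $\omega=\omega^{1D}+\tilde\omega$, no propagation of a remainder in $C^\alpha$, no limsup-at-the-corner contradiction.

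Your invocation of Theorem \ref{thm:1Dblowup} is where your argument actually breaks. That result and its $[-L,L]$ variant in Section \ref{sec:1D} are proved for \emph{odd} $g$ and \emph{even} $P$ on a symmetric interval; your inherited data $g_0\equiv\omega_0(0)$ is a nonzero constant (not odd) and $P_0(\theta)=\partial_{x_1}\rho_0(0)\cos\theta+\partial_{x_2}\rho_0(0)\sin\theta$ is not even, on the one-sided interval $[0,\beta\pi]$. Even setting the symmetry mismatch aside, the sign conditions you claim do not follow from the hypotheses: the theorem only assumes $\partial_{x_2}\rho_0(0)>0$, so $P_0(0)=\partial_{x_1}\rho_0(0)$ has no prescribed sign, and none of $P_0\ge 0$, $P_0'\ge 0$ on $[0,\beta\pi]$, or $P_0(\beta\pi)>\sec(\beta\pi)P_0(0)$ need hold. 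The correct repair is to notice that your specific $1D$ data --- constant $g$, first-harmonic $P$ --- is an invariant finite-dimensional class for the $1D$ system on $[0,\beta\pi]$ (constant vorticity gives a purely quadratic stream function, hence a linear velocity, which keeps $\rho$ linear), and on this class the $1D$ system \emph{is} a three-dimensional ODE for $(A,B,C)$. That reduction is exactly the paper's argument, reached without any of the positivity/monotonicity machinery of Section \ref{sec:1D}.
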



\begin{proof}
The essential observation is that when $\beta<1/2$, $\nabla u(0)$ depends \emph{only} on $\omega(0)$. This allows one to write an ODE system on $\nabla\rho(0)$ and $\omega(0)$ and the blow-up can be seen directly. Notice that if $\omega\in C^{k,\alpha}(\overline{\Omega}_\beta)$, and $\Psi$ is the unique solution  satisfying $\Delta\Psi=\omega$ on $\Omega_\beta$ and $\Psi=0$ on $\partial\Omega_\beta$, then since $k + \alpha < 1/\beta -2$, we have $\Psi \in C^{k+2,\alpha}(\overline{\Omega}_\beta)$. In particular, Taylor's theorem gives the expansion $$\Psi(x_1,x_2) = \frac{\omega(0)}{2(1-\beta^2)}(x_1^2-\beta^2 x_2^2) +o(|x|^2).$$ Now evaluating the system at the origin, we have  $$\partial_t \omega(0)=\partial_{x_2}\rho(0),$$ $$\partial_t \partial_{x_1}\rho(0)=-\partial_{x_1} u(0)\cdot\nabla\rho(0),$$ and
$$\partial_t \partial_{x_2}\rho(0)=-\partial_{x_2}u(0)\cdot\nabla\rho(0).$$
Now notice: $\partial_{x_1}u_1(0)=\partial_{x_2} u_2(0)=0$, $\partial_{x_1} u_2(0)=\frac{\omega(0)}{(1-\beta^2)}$, and $\partial_{x_2} u_1(0)=\frac{\beta^2}{(1-\beta^2)}\omega(0)$, using the Taylor expansion of $\Psi$ and the fact that $\nabla^\perp \Psi=\omega$. 
Hence, the above set of equations can be re-written as 
$$\partial_t \omega(0)=\partial_{x_2}\rho(0)$$ $$\partial_t \partial_{x_1}\rho(0)=-\frac{\omega(0)}{1-\beta^2}\partial_{x_2}\rho(0)$$
$$\partial_t \partial_{x_2}\rho(0)=-\frac{\beta^2}{(1-\beta^2)}\omega(0)\partial_{x_1} \rho(0).$$ To clarify things, we introduce $A=\omega(0),$ $B=\partial_{x_1}\rho(0)$, and $C=\partial_{x_2}\rho(0)$ to obtain the system 
$$A'=C,$$
$$B'=-\frac{1}{1-\beta^2}AC,$$
$$C'=-\frac{\beta^2}{1-\beta^2}AB,$$ and it is easy to see that if $A_0,C_0\geq 0$ and $B_0\leq 0$, then we have singularity formation in finite time so long as $C_0>0$.
Alternatively, using the cut-off argument in the previous sections, it suffices to exhibit a velocity field and density profile which are homogeneous of degree 1, which satisfy the boundary condition ($u\cdot n=0$ on the boundary), which are $C^{k+1,\alpha}(\overline{\Omega}_\beta)$, and which become singular in finite time. 
\end{proof}

\begin{remark}
	In an interesting work of Chae, Constantin, and Wu \cite{CCW2}, the authos consider solutions to the $3D$ Euler and SQG equations satisfying various symmetry assumptions and explore potential finite-time singularity formation on the lower dimensional set left fixed by the symmetry upon consideration, by writing down the simplified system on that invariant set similar to the one appearing in the proof of Theorem \ref{AcuteCase}. Our result show that in our specific setting of a sector, the reduced system for the Boussinesq system can be closed by itself. A similar phenomenon occurs also in the case of the $3D$ Euler equations and this will be treated in our forthcoming work. 
\end{remark}

\section{Conclusion}

In \cite{EJSI} we introduced a method for establishing singularity formation for inviscid incompressible fluid equations using scale-invariant solutions. The method essentially relies on the study of scale-invariant solutions which (generally) satisfy a simpler evolution equation. One of the difficulties one faces in trying to carry this out is that the basic fluid equations are generally ill-posed in borderline regularity classes (see \cite{EM1} and \cite{BL2}) and scale-invariant solutions are exactly exactly at the borderline regularity. This led us to define a scale of spaces, which we call $\mathring{C}^\alpha$ which are small enough to overcome ill-posedness but large enough to accommodate scale-invariant solutions. In this work, we applied the method to the Boussinesq system on a class of domains. It is likely that this method has much wider applicability.  One direction for further inquiry would be to examine how flexible the cut-off argument above is--in particular, is it possible to establish singularity formation when one cuts a scale-invariant solution \emph{both} at $0$ and at $\infty$? Another direction of interest would be to extend this analysis to the half-space case $\mathbb{R}^2_+$ by possibly combining the ideas here with the ideas used in \cite{KS}.   

\section*{Acknowledgments}
We are grateful to Fanghua Lin and Yuan Cai for pointing out a mistake in the proof of Theorem \ref{thm:1Dblowup} in an earlier version of the manuscript. We also thank Vladimir \v{S}ver\'ak for many comments which greatly improved the exposition. 

\bibliographystyle{plain}
\bibliography{Boussinesq_final}

\begin{thebibliography}{10}

\bibitem{BCD}
Hajer Bahouri, Jean-Yves Chemin, and Rapha{\"e}l Danchin.
\newblock {\em Fourier analysis and nonlinear partial differential equations},
  volume 343 of {\em Grundlehren der Mathematischen Wissenschaften [Fundamental
  Principles of Mathematical Sciences]}.
\newblock Springer, Heidelberg, 2011.

\bibitem{BDT}
Claude Bardos, Francesco Di~Plinio, and Roger Temam.
\newblock The {E}uler equations in planar nonsmooth convex domains.
\newblock {\em J. Math. Anal. Appl.}, 407(1):69--89, 2013.

\bibitem{BKM}
J.~T. Beale, T.~Kato, and A.~Majda.
\newblock Remarks on the breakdown of smooth solutions for the {$3$}-{D}
  {E}uler equations.
\newblock {\em Comm. Math. Phys.}, 94(1):61--66, 1984.

\bibitem{BeCo}
A.~L. Bertozzi and P.~Constantin.
\newblock Global regularity for vortex patches.
\newblock {\em Comm. Math. Phys.}, 152(1):19--28, 1993.

\bibitem{BL2}
Jean Bourgain and Dong Li.
\newblock Strong illposedness of the incompressible {E}uler equation in integer
  {$C^m$} spaces.
\newblock {\em Geom. Funct. Anal.}, 25(1):1--86, 2015.

\bibitem{BradshawTsaiFSS17}
Zachary Bradshaw and Tai-Peng Tsai.
\newblock Forward discretely self-similar solutions of the {N}avier-{S}tokes
  equations {II}.
\newblock {\em Ann. Henri Poincar\'e}, 18(3):1095--1119, 2017.

\bibitem{CP96}
M.~Cannone and F.~Planchon.
\newblock Self-similar solutions for {N}avier-{S}tokes equations in ${\bf
  {r}}\sp 3$.
\newblock {\em Comm. Partial Differential Equations}, 21(1-2):179--193, 1996.

\bibitem{CaoWu2013}
Chongsheng Cao and Jiahong Wu.
\newblock Global regularity for the two-dimensional anisotropic {B}oussinesq
  equations with vertical dissipation.
\newblock {\em Arch. Ration. Mech. Anal.}, 208(3):985--1004, 2013.

\bibitem{Chae2006}
Dongho Chae.
\newblock Global regularity for the 2{D} {B}oussinesq equations with partial
  viscosity terms.
\newblock {\em Adv. Math.}, 203(2):497--513, 2006.

\bibitem{CCW2}
Dongho Chae, Peter Constantin, and Jiahong Wu.
\newblock Deformation and symmetry in the inviscid {SQG} and the 3{D} {E}uler
  equations.
\newblock {\em J. Nonlinear Sci.}, 22(5):665--688, 2012.

\bibitem{CCW}
Dongho Chae, Peter Constantin, and Jiahong Wu.
\newblock An incompressible 2{D} didactic model with singularity and explicit
  solutions of the 2{D} {B}oussinesq equations.
\newblock {\em J. Math. Fluid Mech.}, 16(3):473--480, 2014.

\bibitem{ChaeNam97}
Dongho Chae and Hee-Seok Nam.
\newblock Local existence and blow-up criterion for the {B}oussinesq equations.
\newblock {\em Proc. Roy. Soc. Edinburgh Sect. A}, 127(5):935--946, 1997.

\bibitem{CISY}
S.~Childress, G.~R. Ierley, E.~A. Spiegel, and W.~R. Young.
\newblock Blow-up of unsteady two-dimensional {E}uler and {N}avier-{S}tokes
  solutions having stagnation-point form.
\newblock {\em J. Fluid Mech.}, 203:1--22, 1989.

\bibitem{HLModel}
K~Choi, T.Y Hou, A~Kiselev, G~Luo, V~Sverak, and Y~Yao.
\newblock On the finite-time blowup of a one-dimensional model for the
  three-dimensional axisymmetric euler equations.
\newblock {\em Comm. Pure Appl. Math.}, 2017.

\bibitem{CKY}
Kyudong Choi, Alexander Kiselev, and Yao Yao.
\newblock Finite time blow up for a 1{D} model of 2{D} {B}oussinesq system.
\newblock {\em Comm. Math. Phys.}, 334(3):1667--1679, 2015.

\bibitem{CLM}
P.~Constantin, P.~D. Lax, and A.~Majda.
\newblock A simple one-dimensional model for the three-dimensional vorticity
  equation.
\newblock {\em Comm. Pure Appl. Math.}, 38(6):715--724, 1985.

\bibitem{Con}
Peter Constantin.
\newblock The {E}uler equations and nonlocal conservative {R}iccati equations.
\newblock {\em Internat. Math. Res. Notices}, (9):455--465, 2000.

\bibitem{CV}
Peter Constantin and Vlad Vicol.
\newblock Nonlinear maximum principles for dissipative linear nonlocal
  operators and applications.
\newblock {\em Geom. Funct. Anal.}, 22(5):1289--1321, 2012.

\bibitem{Danchin2013}
Rapha\"el Danchin.
\newblock Remarks on the lifespan of the solutions to some models of
  incompressible fluid mechanics.
\newblock {\em Proc. Amer. Math. Soc.}, 141(6):1979--1993, 2013.

\bibitem{DanchinPaicu}
Rapha\"el Danchin and Marius Paicu.
\newblock Global well-posedness issues for the inviscid {B}oussinesq system
  with {Y}udovich's type data.
\newblock {\em Comm. Math. Phys.}, 290(1):1--14, 2009.

\bibitem{DanchinPaicu2011}
Rapha\"el Danchin and Marius Paicu.
\newblock Global existence results for the anisotropic {B}oussinesq system in
  dimension two.
\newblock {\em Math. Models Methods Appl. Sci.}, 21(3):421--457, 2011.

\bibitem{DT}
Francesco Di~Plinio and Roger Temam.
\newblock Grisvard's shift theorem near {$L^\infty$} and {Y}udovich theory on
  polygonal domains.
\newblock {\em SIAM J. Math. Anal.}, 47(1):159--178, 2015.

\bibitem{DoeringGibbon}
Charles~R. Doering and J.~D. Gibbon.
\newblock {\em Applied analysis of the {N}avier-{S}tokes equations}.
\newblock Cambridge Texts in Applied Mathematics. Cambridge University Press,
  Cambridge, 1995.

\bibitem{EShu94}
Weinan E and Chi-Wang Shu.
\newblock Small-scale structures in {B}oussinesq convection.
\newblock {\em Phys. Fluids}, 6(1):49--58, 1994.

\bibitem{EJSI}
Tarek Elgindi and In-Jee Jeong.
\newblock Symmetries and critical phenomena in fluids.
\newblock {\em arXiv:1610.09701}, 2016.

\bibitem{EJDG}
Tarek Elgindi and In-Jee Jeong.
\newblock On the effects of advection and vortex stretching.
\newblock {\em arXiv:1701.04050}, 2017.

\bibitem{E1}
Tarek~M. Elgindi.
\newblock Remarks on functions with bounded {L}aplacian.
\newblock {\em arXiv:1605.05266}, 2016.

\bibitem{EM1}
Tarek~M Elgindi and Nader Masmoudi.
\newblock Ill-posedness results in critical spaces for some equations arising
  in hydrodynamics.
\newblock {\em arXiv:1405.2478}, 2014.

\bibitem{Rayleigh1916}
Lord Rayleigh~O.M. F.R.S.
\newblock Lix. {O}n convection currents in a horizontal layer of fluid, when
  the higher temperature is on the under side.
\newblock {\em Philosophical Magazine}, 32(192):529--546, 1916.

\bibitem{Gibbon}
J.~D. Gibbon.
\newblock The three-dimensional {E}uler equations: where do we stand?
\newblock {\em Phys. D}, 237(14-17):1894--1904, 2008.

\bibitem{Gris}
P.~Grisvard.
\newblock {\em Elliptic problems in nonsmooth domains}, volume~24 of {\em
  Monographs and Studies in Mathematics}.
\newblock Pitman (Advanced Publishing Program), Boston, MA, 1985.

\bibitem{HassainiaHmidi}
Zineb Hassainia and Taoufik Hmidi.
\newblock On the inviscid {B}oussinesq system with rough initial data.
\newblock {\em J. Math. Anal. Appl.}, 430(2):777--809, 2015.

\bibitem{HKR2}
T.~Hmidi, S.~Keraani, and F.~Rousset.
\newblock Global well-posedness for {E}uler-{B}oussinesq system with critical
  dissipation.
\newblock {\em Comm. Partial Differential Equations}, 36(3):420--445, 2011.

\bibitem{HKR1}
Taoufik Hmidi, Sahbi Keraani, and Fr\'ed\'eric Rousset.
\newblock Global well-posedness for a {B}oussinesq-{N}avier-{S}tokes system
  with critical dissipation.
\newblock {\em J. Differential Equations}, 249(9):2147--2174, 2010.

\bibitem{HORY2016}
V.~{Hoang}, B.~{Orcan-Ekmekci}, M.~{Radosz}, and H.~{Yang}.
\newblock {Blowup with vorticity control for a 2D model of the Boussinesq
  equations}.
\newblock {\em ArXiv e-prints}, August 2016.

\bibitem{IMY2}
Tsubasa Itoh, Hideyuki Miura, and Tsuyoshi Yoneda.
\newblock The growth of the vorticity gradient for the two-dimensional {E}uler
  flows on domains with corners.
\newblock {\em arXiv:1602.00815}, 2016.

\bibitem{JiaSverak}
Hao Jia and Vladimir Sverak.
\newblock Are the incompressible 3d navier?"stokes equations locally ill-posed
  in the natural energy space?
\newblock {\em Journal of Functional Analysis}, 268(12):3734 -- 3766, 2015.

\bibitem{JiaSverakSS}
Hao Jia and Vladim\'ir \v{S}ver\'ak.
\newblock Local-in-space estimates near initial time for weak solutions of the
  {N}avier-{S}tokes equations and forward self-similar solutions.
\newblock {\em Invent. Math.}, 196(1):233--265, 2014.

\bibitem{KS}
Alexander Kiselev and Vladimir {\v{S}}ver{\'a}k.
\newblock Small scale creation for solutions of the incompressible
  two-dimensional {E}uler equation.
\newblock {\em Ann. of Math. (2)}, 180(3):1205--1220, 2014.

\bibitem{KZ}
Alexander Kiselev and Andrej Zlato\v{s}.
\newblock Blow up for the 2{D} {E}uler equation on some bounded domains.
\newblock {\em J. Differential Equations}, 259(7):3490--3494, 2015.

\bibitem{L}
Christophe Lacave.
\newblock Uniqueness for two-dimensional incompressible ideal flow on singular
  domains.
\newblock {\em SIAM J. Math. Anal.}, 47(2):1615--1664, 2015.

\bibitem{LMW}
Christophe Lacave, Evelyne Miot, and Chao Wang.
\newblock Uniqueness for the two-dimensional {E}uler equations on domains with
  corners.
\newblock {\em Indiana Univ. Math. J.}, 63(6):1725--1756, 2014.

\bibitem{Leray34am}
Jean Leray.
\newblock Essai sur les mouvements plans d'un liquide visqueux emplissant
  l'espace.
\newblock {\em Acta. Math.}, 63:193--248, 1934.

\bibitem{LiTiti16}
Jinkai Li and Edriss~S. Titi.
\newblock Global well-posedness of the 2{D} {B}oussinesq equations with
  vertical dissipation.
\newblock {\em Arch. Ration. Mech. Anal.}, 220(3):983--1001, 2016.

\bibitem{HL}
Guo Luo and Thomas~Y. Hou.
\newblock Potentially singular solutions of the 3d axisymmetric euler
  equations.
\newblock {\em Proceedings of the National Academy of Sciences},
  111(36):12968--12973, 2014.

\bibitem{HouLuo}
Guo Luo and Thomas~Y. Hou.
\newblock Toward the finite-time blowup of the 3{D} axisymmetric {E}uler
  equations: a numerical investigation.
\newblock {\em Multiscale Model. Simul.}, 12(4):1722--1776, 2014.

\bibitem{Majda03}
Andrew Majda.
\newblock {\em Introduction to {PDE}s and waves for the atmosphere and ocean},
  volume~9 of {\em Courant Lecture Notes in Mathematics}.
\newblock New York University Courant Institute of Mathematical Sciences, New
  York, 2003.

\bibitem{MB}
Andrew~J. Majda and Andrea~L. Bertozzi.
\newblock {\em Vorticity and incompressible flow}, volume~27 of {\em Cambridge
  Texts in Applied Mathematics}.
\newblock Cambridge University Press, Cambridge, 2002.

\bibitem{MP}
Carlo Marchioro and Mario Pulvirenti.
\newblock {\em Mathematical theory of incompressible nonviscous fluids},
  volume~96 of {\em Applied Mathematical Sciences}.
\newblock Springer-Verlag, New York, 1994.

\bibitem{NRS96}
J.~Necas, M.~Ruzicka, and V.~Sver\'ak.
\newblock On {L}eray's self-similar solutions of the {N}avier-{S}tokes
  equations.
\newblock {\em Acta Math.}, 176(2):283--294, 1996.

\bibitem{PumirSiggia}
Alain Pumir and Eric~D. Siggia.
\newblock Development of singular solutions to the axisymmetric {E}uler
  equations.
\newblock {\em Phys. Fluids A}, 4(7):1472--1491, 1992.

\bibitem{SaWu}
Alejandro Sarria and Jiahong Wu.
\newblock Blowup in stagnation-point form solutions of the inviscid 2d
  {B}oussinesq equations.
\newblock {\em J. Differential Equations}, 259(8):3559--3576, 2015.

\bibitem{TaoEuler}
Terence Tao.
\newblock Finite time blowup for {L}agrangian modifications of the
  three-dimensional {E}uler equation.
\newblock {\em Ann. PDE}, 2(2):Art. 9, 79, 2016.

\bibitem{TsaiSS98}
Tai-Peng Tsai.
\newblock On {L}eray's self-similar solutions of the {N}avier-{S}tokes
  equations satisfying local energy estimates.
\newblock {\em Arch. Rational Mech. Anal.}, 143(1):29--51, 1998.

\bibitem{TsaiFSS14}
Tai-Peng Tsai.
\newblock Forward discretely self-similar solutions of the {N}avier-{S}tokes
  equations.
\newblock {\em Comm. Math. Phys.}, 328(1):29--44, 2014.

\bibitem{WuLectureNotes}
Jiahong Wu.
\newblock The $2d$ incompressible {B}oussinesq equations.
\newblock {\em Preprint}.

\bibitem{Y1}
V.~I. Yudovich.
\newblock Non-stationary flows of an ideal incompressible fluid.
\newblock {\em Z. Vycisl. Mat. i Mat. Fiz.}, 3:1032--1066, 1963.

\bibitem{Y3}
V.~I. Yudovich.
\newblock Eleven great problems of mathematical hydrodynamics.
\newblock {\em Mosc. Math. J.}, 3(2):711--737, 746, 2003.
\newblock Dedicated to Vladimir I. Arnold on the occasion of his 65th birthday.

\end{thebibliography}
\end{document}